
\documentclass[11pt,reqno]{amsart}
\usepackage{mathtools} 
\usepackage{a4wide}
\usepackage{dsfont} 
\usepackage{mathrsfs}
\usepackage[all]{xy}
\usepackage{graphicx} 
\usepackage[hidelinks]{hyperref}

\newtheorem{proposition}{Proposition}
\newtheorem{theorem}[proposition]{Theorem}
\newtheorem{lemma}[proposition]{Lemma}
\newtheorem{corollary}[proposition]{Corollary}
\theoremstyle{definition}
\newtheorem{definition}[proposition]{Definition}
\newtheorem{remark}[proposition]{Remark}
\newtheorem{example}[proposition]{Example}

\numberwithin{proposition}{section}
\numberwithin{equation}{section}

\newcommand{\cst}{\ensuremath{\mathrm{C}^*}}

\newcommand{\id}{\mathrm{id}}
\newcommand{\ph}{\varphi}
\newcommand{\I}{\mathds{1}}
\newcommand{\comp}{\circ}
\newcommand{\tens}{\otimes}
\newcommand{\op}{\text{\raisebox{1.5pt}{\scalebox{0.7}{\tiny{\rm{op}}}}}}
\newcommand{\is}[2]{{\left\langle{#1}\,\vline\,#2\right\rangle}}
\newcommand{\bigis}[2]{{\bigl\langle{#1}\,\bigr|\bigl.\,#2\bigr\rangle}}

\newcommand{\bigket}[1]{{\bigl|#1\bigr\rangle}}
\newcommand{\bigbra}[1]{{\bigl\langle#1\bigr|}}

\newcommand{\CC}{\mathbb{C}}

\newcommand{\MM}{\mathbb{M}}

\newcommand{\OO}{\mathbb{O}}
\newcommand{\PP}{\mathbb{P}}
\newcommand{\RR}{\mathbb{R}}
\renewcommand{\SS}{\mathbb{S}}

\newcommand{\XX}{\mathbb{X}}
\newcommand{\YY}{\mathbb{Y}}
\newcommand{\ZZ}{\mathbb{Z}}

\newcommand{\sA}{\mathsf{A}}
\newcommand{\sB}{\mathsf{B}}
\newcommand{\sC}{\mathsf{C}}
\newcommand{\sD}{\mathsf{D}}
\newcommand{\sH}{\mathsf{H}}
\newcommand{\sJ}{\mathsf{J}}

\newcommand{\sL}{\mathsf{L}}

\newcommand{\sS}{\mathsf{S}}


\newcommand{\cK}{\mathcal{K}}

\newcommand{\cU}{\mathcal{U}}

\newcommand{\bPhi}{\boldsymbol{\Phi}}
\newcommand{\bph}{\boldsymbol{\varphi}}
\newcommand{\bLambda}{\boldsymbol{\Lambda}}


\DeclareMathOperator{\B}{B}
\DeclareMathOperator{\C}{C}
\DeclareMathOperator{\M}{M}

\DeclareMathOperator{\Mat}{\mathsf{Mat}}
\DeclareMathOperator{\Mor}{Mor}

\newcommand{\alg}{\text{\tiny{\rm{alg}}}}
\newcommand{\cc}{\text{\tiny{\rm{c}}}}
\newcommand{\env}{\text{\tiny{\rm{env}}}}
\newcommand{\fp}{\boldsymbol{*}}
\newcommand{\mmax}{\text{\tiny{\rm{max}}}}

\newcommand{\QSf}{\mathfrak{QS}_{\text{\tiny{\rm{fin}}}}}
\newcommand{\QSc}{\mathfrak{QS}_{\text{\tiny{\rm{cpt}}}}}
\newcommand{\tbullet}{\text{\tiny$\bullet$}}

\raggedbottom

\vfuzz 4pt

\title{Quantum correlations on quantum spaces}

\dedicatory{Dedicated to S.L.~Woronowicz on occasion of his eightieth birthday}

\author{Arkadiusz Bochniak}
\address{Institute of Theoretical Physics, Jagiellonian University, Poland}
\email{arkadiusz.bochniak@doctoral.uj.edu.pl}

\author{Pawe{\l} Kasprzak}
\address{Department of Mathematical Methods in Physics, Faculty of Physics, University of Warsaw, Poland}
\email{pawel.kasprzak@fuw.edu.pl}

\author{Piotr M.~Sołtan}
\address{Department of Mathematical Methods in Physics, Faculty of Physics, University of Warsaw, Poland}
\email{piotr.soltan@fuw.edu.pl}

\keywords{Quantum space, quantum space of maps, operator system, quantum correlation, synchronous correlation}
\subjclass[2010]{Primary: 46L89, 81P40, Secondary: 46L85, 47L25, 91A05}


\begin{document}

\begin{abstract}
For given quantum (non-commutative) spaces $\PP$ and $\OO$ we study the quantum space of maps $\MM_{\PP,\OO}$ from $\PP$ to $\OO$. In case of finite quantum spaces these objects turn out to be behind a large class of maps which generalize the classical $\mathrm{qc}$-correlations known from quantum information theory to the setting of quantum input and output sets. We prove various operator algebraic properties of the \cst-algebras $\C(\MM_{\PP,\OO})$ such as the lifting property and residual finite dimensionality. Inside $\C(\MM_{\PP,\OO})$ we construct a universal operator system $\SS_{\PP,\OO}$ related to $\PP$ and $\OO$ and show, among other things, that the embedding $\SS_{\PP,\OO}\subset\C(\MM_{\PP,\OO})$ is hyperrigid and has another interesting property which we call the \emph{strong extension property}. Furthermore $\C(\MM_{\PP,\OO})$ is the \cst-envelope of $\SS_{\PP,\OO}$ and a large class of non-signalling correlations on the quantum sets $\PP$ and $\OO$ arise from states on $\C(\MM_{\PP,\OO})\tens_\mmax\C(\MM_{\PP,\OO})$ as well as states on the commuting tensor product $\SS_{\PP,\OO}\tens_\cc\SS_{\PP,\OO}$. Finally we introduce and study the notion of a synchronous correlation with quantum input and output sets, prove several characterizations of such correlations and their relation to traces on $\C(\MM_{\PP,\OO})$.
\end{abstract}

\maketitle


\allowdisplaybreaks

\section{Introduction}\label{sect:intro}

The main objects of this paper are quantum spaces of maps and quantum correlations related to them. In order to properly introduce the reader to the subject we begin with the precise definition of a quantum space as well as the associated notation.

\begin{definition}
A \emph{quantum space} is an object $\XX$ of the category dual to the category of \cst-algebras.
\end{definition}

The category of \cst-algebras is the category whose objects are \cst-algebras and whose morphisms are ``morphisms of \cst-algebras'' described in \cite[Section 1]{pseudo}, \cite[Section 0]{unbo} (see also \cite[chapter 2]{lance}), i.e.~non-degenerate $*$-homomorphisms into multiplier algebras. Given any \cst-algebras $\sA$ and $\sB$ the symbol $\Mor(\sA,\sB)$ will denote the set of all morphisms from $\sA$ to $\sB$. Note that if $\sA$ and $\sB$ are unital then a morphism from $\sA$ to $\sB$ is simply a unital $*$-homomorphism from $\sA$ to $\sB$.

According to the conventions adapted e.g.~in the theory of quantum groups (cf.~\cite{SymCoact,KdC2017,clifford,invert,synchr}) the correspondence between quantum spaces and \cst-algebras is expressed by denoting \cst-algebras by $\C(\XX)$ or $\C_0(\XX)$ where $\XX$ is the corresponding quantum space. The distinction between $\C(\cdot)$ and $\C_0(\cdot)$ is based on whether the \cst-algebra is unital (in the former case) or not (in the latter case). This notation emphasizes $\XX$ as the primary focus and this may become cumbersome. Thus in some instances this notation is dropped in favor of more traditional symbols such as $\sA$ or $\sB$ denoting \cst-algebras.

The standard terminology related to quantum spaces includes the following:
\begin{itemize}
\item a quantum space $\XX$ is called \emph{compact} if the corresponding \cst-algebra is unital (and hence denoted by $\C(\XX)$),
\item a quantum space $\XX$ is called \emph{finite} if $\C(\XX)$ is finite-dimensional.
\end{itemize}

Our aim is to generalize and study the notion of a \emph{quantum correlation} on finite sets (see below) to analogous notion for finite quantum spaces. First steps towards such a generalization have been made in e.g.~\cite{DuanWinter,BGH,TT} and we propose a more systematic, more general, and ``coordinate free'' approach thoroughly grounded in non-commutative topology and geometry. As a result we are not only able to reproduce some key results known for particular types of quantum spaces with seemingly simpler and more transparent proofs, but we can also describe and exploit the deep connection between quantum correlations and quantum spaces of maps discussed already in \cite{synchr}. Our investigations lead us to study certain universal operator systems (whose particular examples were already explored in \cite{BGH} and \cite{TT}) as well as to results about quantum spaces of maps whose generality goes far beyond the setting of finite quantum spaces. Our results seem to be related to forthcoming work of M.~Brannan, S.~Harris, I.~Todorov and L.~Turowska \cite{BHTT}.

Let us briefly discuss the notion of a \emph{quantum correlation} which was the starting point of our work. First let $P$ and $O$ be finite sets. By a \emph{quantum correlation} or a \emph{quantum strategy} on $P$ and $O$ we mean a collection of non-negative numbers $\bigl\{p(a,b|x,y)\,\bigr|\bigl.\,a,b\in{O},\:x,y\in{P}\bigr\}$ such that for each $(x,y)$ the maps
\begin{equation}\label{eq:pdP}
a\longmapsto\sum\limits_bp(a,b|x,y)\quad\text{and}\quad
b\longmapsto\sum\limits_ap(a,b|x,y)
\end{equation}
are probability distributions on $P$. A very convenient interpretation of these objects is provided by the theory of \emph{non-local games} where two players -- Alice and Bob -- are asked questions $x$ and $y$ from the set $P$ and are supposed to provide answers $a$ and $b$ from the set $O$. The number $p(a,b|x,y)$ is interpreted as the conditional probability of Alice answering $a$ and Bob answering $b$ given questions $x$ and $y$.\footnote{We are in fact dealing with a simplified version of the theory in which the sets of possible questions and answers are the same for Alice and Bob. The additional generality of allowing different question and answers sets for the two players is not essential, especially since we are aiming to study \emph{synchronous correlations}, cf.~Section \ref{sect:synchr}.} There are various ways to obtain correlations $\bigl\{p(a,b|x,y)\,\bigr|\bigl.\,a,b\in{O},\:x,y\in{P}\bigr\}$ and correlations arising from various constructions are classified into ``local'', ``quantum'', ``quantum commuting'' etc. One fundamental class of correlations is the so-called non-signalling ones. They are the ones for which the maps \eqref{eq:pdP} are independent of $y$ and $x$ respectively (they are the so-called \emph{marginals} of the correlation $\bigl\{p(a,b|x,y)\bigr\}$). For the details of the theory we refer the reader e.g.~to \cite{DuanWinter,DykemaPaulsen,HMPS,PSSTW}.

The class of correlations we will generalize to the situation when the sets $P$ and $O$ are replaced by finite quantum spaces $\PP$ and $\OO$ is the \emph{quantum commuting} (or simply \emph{$\mathrm{qc}$-cor\-re\-la\-tions}), i.e.~those of the form
\[
p(a,b|x,y)=\bigis{\xi}{E_{x,a}F_{y,b}\xi},\qquad{x,y}\in{P},\:a,b\in{O}
\]
where $\Psi$ is a unit vector in a Hilbert space $\sH$ and
\[
\bigl\{E_{x,a}\,\bigr|\bigl.\,x\in{P},\:a\in{O}\bigr\}\quad\text{and}\quad\bigl\{F_{y,b}\,\bigr|\bigl.\,y\in{P},\:b\in{O}\bigr\}
\]
are families of projections in $\B(\sH)$ such that
\begin{itemize}
\item for all $(x,y,a,b)\in{P}\times{P}\times{O}\times{O}$ we have $E_{x,a}F_{y,b}=F_{y,b}E_{x,a}$,
\item for all $x\in{P}$ we have $\sum\limits_{a}E_{x,a}=\I_\sH$ and for all $y\in{P}$ we have $\sum\limits_{b}F_{y,b}=\I_\sH$.
\end{itemize}
Note that such correlations are automatically non-signalling. Our generalization of $qc$-cor\-re\-la\-tions is explained in Section \ref{sect:realizable} where we use the terminology of \emph{realizable correlations} in order to emphasize that such correlations can be \emph{realized} in a certain well understood way.

In recent preprints \cite{BGH,TT} the task to generalize the notion of a quantum correlation to quantum sets of questions and answers has been carried out in certain special cases. Our work is very much inspired by these developments and we feel that our approach provides a more flexible framework for this theory to be developed further. Although many of our results have their origins in \cite{BGH,TT}, our treatment makes it possible to generalize many of them (sometimes quite substantially) and to differentiate between results and necessary techniques which are brought on by the particular examples of quantum spaces and general theorems and arguments independent of the particular quantum sets under investigation.

To conclude the introduction let us briefly summarize the structure and contents of the paper. We begin with the definition of the quantum space $\MM_{\PP,\OO}$ of all maps from a quantum space $\PP$ to a quantum space $\OO$ and a short discussion of conditions ensuring its existence. Then we study properties of $\MM_{\PP,\OO}$ as a function of $\PP$ and $\OO$, particularly the behavior under morphisms in either variable. We also prove that the \cst-algebra $\C(\MM_{\PP,\OO})$ is residually finite-dimensional whenever $\PP$ is finite and $\OO$ is compact and such that $\C(\OO)$ is residually finite dimensional. Next, in Section \ref{sect:SPO} we define and study a certain operator system $\SS_{\PP,\OO}$ related to quantum spaces $\PP$ and $\OO$ which later plays an important role in the description of realizable correlations. Among other things we prove that $\C(\MM_{\PP,\OO})$ is the \cst-envelope of $\SS_{\PP,\OO}$ and that the embedding $\SS_{\PP,\OO}\subset\C(\MM_{\PP,\OO})$ is hyperrigid. Moreover we show that any u.c.p.~map from $\SS_{\PP.\OO}$ to a \cst-algebra $\sB$ extends to a u.c.p.~map $\C(\MM_{\PP,\OO})\to\sB$ (a property we call the \emph{strong extension property}, see Definition \ref{def:hypersep}\eqref{def:hypersep2}). Remarkably both hyperrigidity and the strong extension property provide a conceptual explanation of some other facts concerning $\SS_{\PP,\OO}\subset\C(\MM_{\PP,\OO})$ and they were not emphasized in the case of $\PP$ and $\OO$ classical and the case of $\PP$ matricial ($\C(\PP)=\Mat_n(\CC)$) and $\OO$ classical. Section \ref{sect:LP} deals with the lifting properties of $\C(\MM_{\PP,\OO})$ and $\SS_{\PP,\OO}$ which we show to hold whenever $\C(\OO)$ has the lifting property and is separable (although $\C(\MM_{\PP,\OO})$ is almost never nuclear). In the following section we specify both $\PP$ and $\OO$ to be finite and define realizable correlations with quantum input set $\PP$ and output set $\OO$. Moreover we show that such correlations arise from states on $\C(\MM_{\PP,\OO})\tens_\mmax\C(\MM_{\PP,\OO})$ as well as states on the commuting tensor product $\SS_{\PP,\OO}\tens_\cc\SS_{\PP,\OO}$. Finally in Section \ref{sect:synchr} we study the quantum analog of synchronicity of correlations on finite quantum spaces explaining incidentally that the language of finite quantum spaces proves very convenient and effective for the generalization of this concept. We introduce an algebraic definition of synchronicity along the lines of \cite[Section 2]{BGH} and prove their characterization via traces on $\C(\MM_{\PP,\OO})$ as well as several other results analogous to those of \cite{BGH,HMPS,PSSTW,TT}.

\section{Quantum spaces of maps}

By definition mappings between quantum spaces are morphisms of the corresponding \cst-algebras understood as going in the opposite direction. Thus given quantum spaces $\XX$ and $\YY$ we identify the set $\Mor(\C_0(\YY),\C_0(\XX))$ with the set of maps from $\XX$ to $\YY$. However, the theory of quantum spaces allows ``quantum sets'' of maps which are more precisely defined as \emph{quantum families of maps} (\cite{qfam,pseudo}).

\begin{definition}\label{def:qfam}
Let $\PP$, $\OO$ and $\XX$ be quantum spaces. A \emph{quantum family of maps} from $\PP$ to $\OO$ indexed by $\XX$ is a morphism $\Phi\in\Mor(\C_0(\OO),\C_0(\PP)\tens\C_0(\XX))$.\footnote{The tensor product $\C_0(\XX)\tens\C_0(\XX)$ is the minimal tensor product of \cst-algebras. We will use other tensor products in Sections \ref{sect:SPO}, \ref{sect:realizable} and \ref{sect:synchr}.}
\end{definition}

Quantum families of maps are very general objects and hence the interesting ones are those which possess additional properties. Throughout this paper the most important role will be played by the \emph{quantum families of all maps}.

\begin{definition}
Let $\PP$ and $\OO$ be quantum spaces. We say that
\[
\bPhi_{\PP,\OO}\in\Mor\bigl(\C_0(\OO),\C_0(\PP)\tens\C_0(\MM_{\PP,\OO})\bigr)
\]
is the \emph{quantum family of all maps} from $\PP$ to $\OO$ if for any quantum space $\XX$ and any quantum family $\Psi\in\Mor(\C_0(\OO),\C_0(\PP)\tens\C_0(\XX))$ there exists a unique $\Lambda\in\Mor(\C_0(\MM_{\PP,\OO}),\C_0(\XX))$ such that the diagram
\[
\xymatrix{
\C_0(\OO)\ar[rr]^-{\bPhi_{\PP,\OO}}\ar@{=}[d]&&\C_0(\PP)\tens\C_0(\MM_{\PP,\OO})\ar[d]^{\id\tens\Lambda}\\
\C_0(\OO)\ar[rr]^-{\Psi}&&\C_0(\PP)\tens\C_0(\XX)
}
\]
is commutative.\footnote{This is a diagram in the category of \cst-algebras, so the arrows represent mappings into the corresponding multiplier algebras and their compositions require certain maneuvers, see \cite[Section 1]{pseudo} or \cite[Chapter 2]{lance}.}
\end{definition}

It is easy to see that if $(\MM_{\PP,\OO},\bPhi_{\PP,\OO})$ exists for the given $\PP$ and $\OO$ then it is unique up to a natural notion of isomorphism, i.e.~if $(\MM,\bPhi)$ is another pair with the same universal property then there is an isomorphism $\bLambda\colon\C_0(\MM_{\PP,\OO})\to\C_0(\MM)$ such that $\bPhi=(\id\tens\bLambda)\comp\bPhi_{\PP,\OO}$. The quantum space $\MM_{\PP,\OO}$ is called the \emph{quantum space of all maps} from $\PP$ to $\OO$.

\begin{theorem}\label{thm:exMPO}
Let $\PP$ be a finite quantum space and $\OO$ a compact quantum space. Then the quantum space $\MM_{\PP,\OO}$ of all maps from $\PP$ to $\OO$ exists and is compact. Moreover the \cst-algebra $\C(\MM_{\PP,\OO})$ is generated by the set
\[
\bigl\{(\omega\tens\id)\bPhi_{\PP,\OO}(a)\,\bigr|\bigl.\,a\in\C(\OO),\:\omega\in\C(\PP)^*\bigr\}.
\]
\end{theorem}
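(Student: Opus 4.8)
The plan is to build $\C(\MM_{\PP,\OO})$ directly as a universal \cst-algebra defined by generators and relations, exploiting the finite-dimensionality of $\C(\PP)$. Since $\PP$ is finite, write $\C(\PP)\cong\bigoplus_{k}\Mat_{n_k}(\CC)$ with matrix units $e_{ij}^{(k)}$, and let $\omega_{ij}^{(k)}\in\C(\PP)^*$ be the dual functionals determined by $\omega_{ij}^{(k)}(e_{pq}^{(l)})=\delta_{kl}\delta_{ip}\delta_{jq}$. Because $\C(\PP)$ is unital and finite-dimensional, for every \cst-algebra $\sB$ one has $\M(\C(\PP)\tens\sB)=\C(\PP)\tens\M(\sB)$, so a morphism $\Psi\in\Mor(\C(\OO),\C(\PP)\tens\C_0(\XX))$ is the same thing as a unital $*$-homomorphism $\C(\OO)\to\C(\PP)\tens\M(\C_0(\XX))$, and it decomposes uniquely as $\Psi(a)=\sum_{k,i,j}e_{ij}^{(k)}\tens\Psi_{ij}^{(k)}(a)$ with $\Psi_{ij}^{(k)}=(\omega_{ij}^{(k)}\tens\id)\comp\Psi$. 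This reduces the universal problem to producing a single algebra carrying a generic such matrix of slice maps.

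Accordingly, I would first introduce the universal unital \cst-algebra $\sA$ generated by symbols $T_{ij}^{(k)}(a)$, linear in $a\in\C(\OO)$, subject to exactly the relations that force $\bPhi(a):=\sum_{k,i,j}e_{ij}^{(k)}\tens T_{ij}^{(k)}(a)$ to be a unital $*$-homomorphism $\C(\OO)\to\C(\PP)\tens\sA$ (linearity, multiplicativity, $*$-compatibility and unitality). The decisive point — and the step I expect to be the main obstacle — is to check that these relations are \emph{admissible}, i.e.\ that a largest \cst-seminorm exists. This is precisely where finiteness of $\PP$ enters: in any representation in which $\bPhi$ is realized as a $*$-homomorphism it is contractive, so $\|T_{ij}^{(k)}(a)\|=\|(\omega_{ij}^{(k)}\tens\id)\bPhi(a)\|\le\|\omega_{ij}^{(k)}\|\,\|\bPhi(a)\|\le\|\omega_{ij}^{(k)}\|\,\|a\|$, and the finitely many constants $\|\omega_{ij}^{(k)}\|$ provide a uniform bound on all generators. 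Hence the enveloping \cst-seminorm is finite and $\sA$ exists via Blackadar's universal construction. Unitality of $\sA$ is then automatic: matching coefficients in the relation $\bPhi(\I)=\I\tens\I$ forces $T_{ii}^{(k)}(\I)=\I_\sA$, so $\C(\MM_{\PP,\OO})$ is unital and $\MM_{\PP,\OO}$ is compact.

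Next I would verify the universal property for $(\MM_{\PP,\OO},\bPhi_{\PP,\OO}):=(\sA,\bPhi)$. Given $\Psi\in\Mor(\C(\OO),\C(\PP)\tens\C_0(\XX))$, define $\Lambda$ on generators by $\Lambda(T_{ij}^{(k)}(a))=\Psi_{ij}^{(k)}(a)$; since $\Psi$ is a unital $*$-homomorphism, its components $\Psi_{ij}^{(k)}$ satisfy the defining relations of $\sA$, so $\Lambda$ extends to a $*$-homomorphism $\sA\to\M(\C_0(\XX))$. The computation $\Lambda(\I_\sA)=\Psi_{11}^{(1)}(\I)=\omega_{11}^{(1)}(\I_{\C(\PP)})\,\I=\I$ shows $\Lambda$ is non-degenerate, hence a genuine morphism, and by construction $(\id\tens\Lambda)\comp\bPhi=\Psi$. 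Uniqueness is immediate, as the $T_{ij}^{(k)}(a)$ generate $\sA$ and $\Lambda$ is forced on them.

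Finally the generation statement is read off the construction. By definition $\sA$ is generated by the elements $T_{ij}^{(k)}(a)=(\omega_{ij}^{(k)}\tens\id)\bPhi_{\PP,\OO}(a)$, each of the form $(\omega\tens\id)\bPhi_{\PP,\OO}(a)$ with $\omega=\omega_{ij}^{(k)}\in\C(\PP)^*$; conversely, since every $\omega\in\C(\PP)^*$ is a linear combination of the $\omega_{ij}^{(k)}$ and slicing is linear, every element $(\omega\tens\id)\bPhi_{\PP,\OO}(a)$ lies in the closed linear span of the generators. Therefore the set $\bigl\{(\omega\tens\id)\bPhi_{\PP,\OO}(a)\,\bigr|\bigl.\,a\in\C(\OO),\:\omega\in\C(\PP)^*\bigr\}$ generates $\C(\MM_{\PP,\OO})$, completing the proof.
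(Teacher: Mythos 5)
Your proposal is correct, and it takes a route that the paper itself does not spell out: the paper gives no self-contained proof of Theorem \ref{thm:exMPO}, instead deferring to \cite{invert,qfam} (where $\C(\OO)$ is presented as a quotient of a full group \cst-algebra of a free group and $\C(\MM_{\PP,\OO})$ is carved out of a larger such algebra) and pointing in Remark \ref{rem:existenceCMPO} to the concrete model coming from Propositions \ref{freePP} and \ref{prop:sD}, namely $\C(\MM_{\PP,\OO})=\sD_1\fp\dotsm\fp\sD_m$ with $\sD_i$ the relative commutant of $\Mat_{n_i}(\CC)$ in $\Mat_{n_i}(\CC)\fp\C(\OO)$. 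You instead build $\C(\MM_{\PP,\OO})$ as a universal \cst-algebra on generators $T^{(k)}_{ij}(a)$ indexed by \emph{all} $a\in\C(\OO)$, with relations encoding that $\bPhi$ is a unital $*$-homomorphism, and your admissibility check is exactly the right one: an algebraically defined $*$-homomorphism between \cst-algebras is automatically contractive, and slicing by the finitely many matrix-unit functionals gives $\|T^{(k)}_{ij}(a)\|\leq\|\omega^{(k)}_{ij}\|\,\|a\|$ in every representation, which is all the universal construction requires. (One phrase is slightly off: the bounds are not ``uniform'' over all generators since they depend on $\|a\|$, but admissibility only needs a bound per generator, which you have.) The remaining steps --- unitality from $\bPhi(\I)=\I\tens\I$, the universal property via the component maps $\Psi^{(k)}_{ij}$, and the generation statement --- are all sound. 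What your approach buys is a short, presentation-free existence proof that never chooses generators of $\C(\OO)$ nor invokes free groups; what the paper's free-product model buys is the explicit description of $\C(\MM_{\PP,\OO})$ that powers the later results (residual finite dimensionality, non-nuclearity, the injectivity statements in Theorem \ref{thm:functoriality}), which your abstract construction does not deliver by itself.
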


Theorem \ref{thm:exMPO} is proved in \cite{invert,qfam} in the case when $\C(\OO)$ is unital and finitely generated (i.e.~a quotient of the full group \cst-algebra of a finitely generated free group). The latter assumption can be dropped and the proof from \cite[Appendix]{invert} can be repeated almost verbatim. The only price one has to pay is to consider the full group \cst-algebra of the free group with possibly a very large number of generators. An alternative way to prove the existence of $(\MM_{\PP,\OO},\bPhi_{\PP,\OO})$ in the general case is indicated in Remark \ref{rem:existenceCMPO}.

Let us note that, the universal map $\bPhi_{\PP,\OO}$ is injective. Indeed, one can consider $\XX=\OO$ and
\[
\Psi\colon\C(\OO)\ni{a}\longmapsto\I\tens{a}\in\C(\PP)\tens\C(\OO).
\]
Then there exists $\Lambda\colon\C(\MM_{\PP,\OO})\to\C(\OO)$ such that
\[
\I\tens{a}=(\id\tens\Lambda)\bigl(\bPhi_{\PP,\OO}(a)\bigr),\qquad{a}\in\C(\OO)
\]
which shows that $\bPhi_{\PP,\OO}(a)\neq{0}$ if $a\neq{0}$.

\begin{example}\label{onepoint}
Let $\OO$ be a compact quantum space and let $\boldsymbol{1}$ denote the one-point space (so $\C(\boldsymbol{1})=\CC$). Then one easily finds that $\MM_{\boldsymbol{1},\OO}=\OO$ in the sense that the mapping
\[
\C(\OO)\ni{x}\longmapsto{1}\tens{x}\in\C(\boldsymbol{1})\tens\C(\OO)
\]
has the universal property of $\bPhi_{\boldsymbol{1},\OO}$.
\end{example}

Other examples of quantum spaces of maps were studied e.g.~in \cite{qcomm,qfam,so3,houston,pseudo} as well as \cite{synchr}, where their relation to non-local games was studied. Let us note that the \cst-algebra $\mathcal{P}_{n,c}$ of \cite{BGH} is precisely $\C(\MM_{\PP,\OO})$ for $\PP$ and $\OO$ such that $\C(\PP)=\Mat_n(\CC)$ and $\C(\OO)=\CC^c$.

\subsection{Disjoint sums of quantum spaces}\hspace*{\fill}

Throughout the paper the symbol $\fp$ will denote the universal free product of unital \cst-algebras amalgamated over the units.

Consider two compact quantum spaces $\PP_1$ and $\PP_2$. The quantum space $\PP_1\sqcup\PP_2$ is defined by setting $\C(\PP_1\sqcup\PP_2)=\C(\PP_1)\oplus\C(\PP_2)$ with the inclusions of $\PP_1$ and $\PP_2$ into $\PP_1\sqcup\PP_2$ described by the projections $\mathrm{p}_i\colon\C(\PP_1\sqcup\PP_2)\to\C(\PP_i)$ for $i=1,2$.

\begin{proposition}\label{freePP}
Let $\PP_1,\PP_2$ be finite quantum spaces and $\OO$ be a compact quantum space. Then the \cst-algebra $\C(\MM_{\PP_1\sqcup\PP_2,\OO})$ is isomorphic to the universal free product $\C(\MM_{\PP_1,\OO})\fp\C(\MM_{\PP_2,\OO})$, and with this identification the universal quantum family of maps
\[
\bPhi_{\PP_1\sqcup\PP_2,\OO}\colon\C(\OO)\longrightarrow\C(\PP_1\sqcup\PP_2)\tens\C(\MM_{\PP_1\sqcup\PP_2,\OO})
\]
is given by
\[
\bPhi_{\PP_1\sqcup\PP_2,\OO}(a)=(\jmath_1\tens\imath_1)\bigl(\bPhi_{\PP_1,\OO}(a)\bigr)+(\jmath_2\tens\imath_2)\bigl(\bPhi_{\PP_2,\OO}(a)\bigr),\qquad{a}\in\C(\OO),
\]
where for $i=1,2$ the maps $\jmath_i\colon\C(\PP_i)\to\C(\PP_1\sqcup\PP_2)$ are the (non-unital) inclusions of direct summands and $\imath_i\colon\C(\MM_{\PP_i,\OO})\to\C(\MM_{\PP_1\sqcup\PP_2,\OO})$ are the inclusions onto the copies of $\C(\MM_{\PP_i,\OO})$ in the free product.
\end{proposition}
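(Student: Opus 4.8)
The plan is to show that the pair consisting of $\C(\MM_{\PP_1,\OO})\fp\C(\MM_{\PP_2,\OO})$ together with the morphism $\bPhi$ defined by the displayed formula satisfies the universal property characterizing $\bPhi_{\PP_1\sqcup\PP_2,\OO}$; uniqueness of the quantum space of all maps then yields the claimed isomorphism and simultaneously identifies the universal family. Since $\PP_1\sqcup\PP_2$ is finite and $\OO$ is compact, Theorem \ref{thm:exMPO} already guarantees that all the relevant spaces of maps exist and are compact, so the whole argument is a diagram chase combining three universal properties: those of $\MM_{\PP_1,\OO}$, of $\MM_{\PP_2,\OO}$, and of the free product $\fp$.

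The first step is to record the key structural fact that quantum families of maps from $\PP_1\sqcup\PP_2$ to $\OO$ indexed by an arbitrary quantum space $\XX$ are in natural bijection with pairs of quantum families from $\PP_1$ and from $\PP_2$ to $\OO$ indexed by $\XX$. This rests on the identification $\C(\PP_1\sqcup\PP_2)\tens\C_0(\XX)=\bigl(\C(\PP_1)\tens\C_0(\XX)\bigr)\oplus\bigl(\C(\PP_2)\tens\C_0(\XX)\bigr)$ coming from $\C(\PP_1\sqcup\PP_2)=\C(\PP_1)\oplus\C(\PP_2)$ and distributivity of the minimal tensor product over direct sums, together with the passage to multiplier algebras (which commutes with finite direct sums and, since the $\C(\PP_i)$ are finite-dimensional, with tensoring by the $\C(\PP_i)$). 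Concretely, given $\Psi\in\Mor\bigl(\C(\OO),\C(\PP_1\sqcup\PP_2)\tens\C_0(\XX)\bigr)$ one sets $\Psi_i=(\mathrm{p}_i\tens\id)\comp\Psi$, and since $\C(\OO)$ is unital each $\Psi_i$ is a unital, hence non-degenerate, $*$-homomorphism, i.e.~$\Psi_i\in\Mor\bigl(\C(\OO),\C(\PP_i)\tens\C_0(\XX)\bigr)$; conversely a pair $(\Psi_1,\Psi_2)$ reassembles into $\Psi$ as a direct sum.

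With this in hand I would run the chase as follows. Given $\Psi$ as above, produce $\Psi_1,\Psi_2$; apply the universal property of each $\MM_{\PP_i,\OO}$ to obtain unique $\Lambda_i\in\Mor(\C(\MM_{\PP_i,\OO}),\C(\XX))$ with $\Psi_i=(\id\tens\Lambda_i)\comp\bPhi_{\PP_i,\OO}$; and then invoke the universal property of $\fp$ (unital $*$-homomorphisms out of the amalgamated free product into the unital algebra $\M(\C_0(\XX))$ correspond exactly to pairs of unital $*$-homomorphisms of the factors) to obtain a unique $\bLambda\in\Mor(\fp,\C(\XX))$ with $\bLambda\comp\imath_i=\Lambda_i$. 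To verify $(\id\tens\bLambda)\comp\bPhi=\Psi$ I would compute, for $a\in\C(\OO)$, that $(\id\tens\bLambda)\bigl((\jmath_i\tens\imath_i)(\bPhi_{\PP_i,\OO}(a))\bigr)=(\jmath_i\tens\id)\bigl((\id\tens\Lambda_i)(\bPhi_{\PP_i,\OO}(a))\bigr)=(\jmath_i\tens\id)(\Psi_i(a))$, and sum over $i$ to recover $\Psi(a)=\Psi_1(a)\oplus\Psi_2(a)$. For uniqueness of $\bLambda$, compose any candidate $\bLambda'$ with $\mathrm{p}_i\tens\id$; using $\mathrm{p}_i\comp\jmath_j=\delta_{ij}\id$ this forces $(\id\tens(\bLambda'\comp\imath_i))\comp\bPhi_{\PP_i,\OO}=\Psi_i$, whence $\bLambda'\comp\imath_i=\Lambda_i$ by the uniqueness clause of the universal property of $\MM_{\PP_i,\OO}$, and then $\bLambda'=\bLambda$ by uniqueness in the free product.

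I expect no genuinely hard step; the only thing requiring care is the bookkeeping with multiplier algebras and non-degeneracy when $\XX$ is not compact, so that the three universal properties (each phrased via morphisms into multiplier algebras) really do compose. In particular one must check that the splitting of families over $\PP_1\sqcup\PP_2$ and the free-product universal property are both applied at the level of $\M(\C_0(\XX))$ and respect non-degeneracy; these are the ``standard maneuvers'' alluded to in the footnotes, and once they are in place the conclusion is immediate.
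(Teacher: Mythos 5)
Your proposal is correct and follows exactly the route the paper takes: the paper's own proof merely states that one should verify the universal property of the pair $\bigl(\C(\MM_{\PP_1,\OO})\fp\C(\MM_{\PP_2,\OO}),\bPhi\bigr)$ and leaves the details to the reader, and your diagram chase (splitting a family over $\PP_1\sqcup\PP_2$ into its two components via $\mathrm{p}_i\tens\id$, applying the universal properties of $\MM_{\PP_i,\OO}$ and of the free product, then checking commutativity and uniqueness) supplies precisely those details, with the multiplier-algebra bookkeeping handled correctly.
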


\begin{proof}
The reasoning is similar to the proof of \cite[Theorem 2.1]{houston}: it is enough to show that the pair $(\C(\MM),\bPhi)$ with
\[
\C(\MM)=\C(\MM_{\PP_1,\OO})\fp\C(\MM_{\PP_2,\OO})
\]
and $\bPhi$ defined by
\begin{equation}\label{defbPhi}
\bPhi(a)=(\jmath_1\tens\imath_1)\bigl(\bPhi_{\PP_1,\OO}(a)\bigr)+(\jmath_2\tens\imath_2)\bigl(\bPhi_{\PP_2,\OO}(a)\bigr),\qquad{a}\in\C(\OO)
\end{equation}
has the universal property of $(\MM_{\PP_1\sqcup\PP_2,\OO},\bPhi_{\PP_1\sqcup\PP_2,\OO})$. The details are left to the reader.
\end{proof}

\begin{remark}\label{remi2}
Let $\OO,\PP_1$ and $\PP_2$ be as above and $\mathrm{p}_1\colon\C(\PP_1)\oplus\C(\PP_2)\to \C(\PP_1)$ the canonical projection. By composing $\bPhi_{\PP_1\sqcup\PP_2,\OO}$ with $(\mathrm{p}_1\tens\id)$ we obtain a unital $*$-homomorphism $\C(\OO)\to\C(\PP_1)\tens\C(\MM_{\PP_1\sqcup\PP_2,\OO})$, so it must be of the form $(\id\tens\Lambda)\comp\bPhi_{\PP_1,\OO}$ for a unique $\Lambda\colon\C(\MM_{\PP_1,\OO})\to\C(\MM_{\PP_1\sqcup\PP_2,\OO})$. In view of the above proposition (particularly the uniqueness of $\Lambda$) we easily see that $\Lambda=\imath_1$.
\end{remark}

Using Proposition \ref{freePP} we can give a rather concrete description of all \cst-algebras $\C(\MM_{\PP,\OO})$. To this end we need the following lemma:

\begin{lemma}[{\cite[Proposition 2.18]{pisier}}]\label{lem:Pisier}
Let $\sC$ be a unital \cst-algebra and let $\gamma\colon\Mat_n(\CC)\to\sC$ be a unital $*$-homomorphism. Let
\[
\sD=\bigl\{c\in\sC\,\bigr|\bigl.\,c\gamma(x)=\gamma(x)c\text{ for all }x\in\Mat_n(\CC)\bigr\}.
\]
Then $\sD$ is a unital \cst-algebra and $\sC$ is isomorphic to $\Mat_n(\CC)\tens\sD$.
\end{lemma}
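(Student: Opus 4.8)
The plan is to realize the isomorphism explicitly through the matrix units carried into $\sC$ by $\gamma$. Write $e_{ij}$ for the standard matrix units of $\Mat_n(\CC)$ and set $E_{ij}=\gamma(e_{ij})\in\sC$. Since $\gamma$ is a unital $*$-homomorphism these satisfy $E_{ij}^*=E_{ji}$, $\;E_{ij}E_{kl}=\delta_{jk}E_{il}$ and $\sum_iE_{ii}=\I$. The set $\sD$ is by definition the relative commutant of $\gamma(\Mat_n(\CC))$ in $\sC$; as the commutant of a $*$-closed subset it is a norm-closed $*$-subalgebra containing $\I$, hence a unital \cst-algebra. This disposes of the first assertion.

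For the isomorphism I would define $\pi\colon\Mat_n(\CC)\tens\sD\to\sC$ on elementary tensors by $\pi(e_{ij}\tens d)=E_{ij}d$ and extend linearly. Because every $d\in\sD$ commutes with all the $E_{kl}$, a direct computation shows that $\pi$ respects products (from $E_{ij}dE_{kl}d'=\delta_{jk}E_{il}dd'$) and adjoints (from $(E_{ij}d)^*=d^*E_{ji}=E_{ji}d^*$), while $\pi(\I\tens\I)=\sum_iE_{ii}=\I$; thus $\pi$ is a unital $*$-homomorphism. Since $\Mat_n(\CC)$ is finite-dimensional the tensor product is unambiguous (minimal equals maximal) and already complete, so no analytic issue arises on the domain side.

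It remains to prove that $\pi$ is bijective. For surjectivity, given $c\in\sC$ I would put $d_{ij}=\sum_kE_{ki}cE_{jk}$; the matrix-unit relations give $E_{pq}d_{ij}=E_{pi}cE_{jq}=d_{ij}E_{pq}$, so $d_{ij}\in\sD$, and $\sum_{i,j}E_{ij}d_{ij}=\bigl(\sum_iE_{ii}\bigr)c\bigl(\sum_jE_{jj}\bigr)=c$, whence $c=\pi\bigl(\sum_{i,j}e_{ij}\tens d_{ij}\bigr)$. For injectivity, if $\sum_{i,j}E_{ij}d_{ij}=0$ then compressing by $E_{pk}(\cdot)E_{lq}$ and using that $d_{kl}$ commutes with the matrix units yields $E_{pq}d_{kl}=0$ for all $p,q$; summing over $p=q$ gives $d_{kl}=0$. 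Hence $\pi$ is a $*$-isomorphism.

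The argument is entirely elementary and I expect no genuine obstacle, only careful bookkeeping with the matrix-unit relations. The one point worth flagging is the verification that the coefficients $d_{ij}$ extracted from a general $c$ actually land in the commutant $\sD$: this is exactly where the symmetrized expression $\sum_kE_{ki}cE_{jk}$ is needed, since a naive choice such as $E_{1i}cE_{j1}$ fails to commute with the $E_{pq}$, whereas the averaged form makes both one-sided multiplications by $E_{pq}$ collapse to the same element $E_{pi}cE_{jq}$.
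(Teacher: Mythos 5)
Your proof is correct and follows essentially the same route as the paper: the same homomorphism $x\tens d\mapsto\gamma(x)d$, the same averaged coefficients $\sum_kE_{ki}cE_{jk}$ for surjectivity, and injectivity by compressing with matrix units and summing over the diagonal.
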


\begin{proof}
The subset $\sD$ is clearly a unital \cst-subalgebra of $\sC$. Furthermore the mapping
\[
x\tens{d}\longmapsto\gamma(x)d,\qquad{x}\in\Mat_n(\CC),\:d\in\sD
\]
extends to a unital $*$-homomorphism $\Gamma\colon\Mat_n(\CC)\tens\sD\to\sC$. To see that it is injective, for $i,j\in\{1,\dotsc,n\}$ let $E_{i,j}=\gamma(e_{i,j})$, where $\{e_{i,j}\}$ are the matrix units in $\Mat_n(\CC)$. If
\begin{equation}\label{eq:eczero}
\sum_{i,j}e_{i,j}\tens{c_{i,j}}
\end{equation}
belongs to the kernel of $\Gamma$ then for any $k,l,r$
\[
0=\Gamma\Biggl((e_{k,l}\tens\I)\biggl(\sum_{i,j}e_{i,j}\tens{c_{i,j}}\biggr)(e_{k,r}\tens\I)\Biggr)=E_{k,k}c_{l,r}.
\]
Hence $0=\sum\limits_kE_{k,k}c_{l,r}=c_{l,r}$ for all $r,l$, so that \eqref{eq:eczero} is zero.

Surjectivity of $\Gamma$ follows from the fact that if $c\in\sC$ then putting
\[
c_{i,j}=\sum_{k=1}^nE_{k,i}cE_{j,k},\qquad{i,j}=1,\dotsc,n
\]
we obtain $c_{i,j}\in\sD$ for all $i,j$ and $\Gamma\biggl(\sum\limits_{i,j}e_{i,j}\tens{c_{i,j}}\bigg)=\sum\limits_{i,j}E_{i,j}c_{i,j}=c$.
\end{proof}

\begin{proposition}\label{prop:sD}
Let $\C(\PP)=\Mat_n(\CC)$ and let $\OO$ be a compact quantum space. Then $\C(\MM_{\PP,\OO})$ is the relative commutant of $\C(\PP)$ in $\C(\PP)\fp\C(\OO)$ and $\bPhi_{\PP,\OO}$ is the composition of the inclusion $\C(\OO)\to\C(\PP)\fp\C(\OO)$ with the isomorphism $\C(\PP)\fp\C(\OO)\to\C(\PP)\tens\C(\MM_{\PP,\OO})$ described in Lemma \ref{lem:Pisier}.
\end{proposition}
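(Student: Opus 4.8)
The plan is to read the statement off from Lemma~\ref{lem:Pisier} together with the universal property of the free product. First I would apply that lemma with $\sC=\C(\PP)\fp\C(\OO)$ and $\gamma\colon\C(\PP)=\Mat_n(\CC)\to\sC$ the inclusion of the first free factor, which is unital because $\fp$ amalgamates over the units. This produces the relative commutant $\sD$ of $\C(\PP)$ in $\sC$ and an isomorphism $\Gamma\colon\C(\PP)\tens\sD\to\sC$, $\Gamma(x\tens d)=\gamma(x)d$. Writing $j_\OO\colon\C(\OO)\to\sC$ for the other inclusion, I set $\bPhi=\Gamma^{-1}\comp j_\OO$ and let $\YY$ be the compact quantum space with $\C(\YY)=\sD$. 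The goal is then to show that $(\YY,\bPhi)$ carries the universal property of $(\MM_{\PP,\OO},\bPhi_{\PP,\OO})$.

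The engine of the argument is that a unital $*$-homomorphism out of $\sC$ is the same thing as a commuting pair of unital $*$-homomorphisms out of $\C(\PP)$ and $\C(\OO)$. Given a compact $\XX$ and a morphism $\Psi\colon\C(\OO)\to\C(\PP)\tens\C(\XX)$, I would feed the pair $\bigl(x\mapsto x\tens\I,\ \Psi\bigr)$ into the free-product universal property to obtain a unique $\rho\colon\sC\to\C(\PP)\tens\C(\XX)$ restricting to these. Since $\sD$ commutes with $\gamma(\C(\PP))$, its image $\rho(\sD)$ lands in the relative commutant of $\C(\PP)\tens\I$ in $\C(\PP)\tens\C(\XX)$, which is $\I\tens\C(\XX)$ (itself an instance of Lemma~\ref{lem:Pisier}); this yields $\Lambda\colon\sD\to\C(\XX)$ with $\rho\comp\Gamma=\id\tens\Lambda$, and hence $(\id\tens\Lambda)\comp\bPhi=\rho\comp j_\OO=\Psi$, giving existence of the factorization. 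Uniqueness I would obtain by reversing the construction: any competing $\Lambda'$ makes $(\id\tens\Lambda')\comp\Gamma^{-1}$ a free-product map with the same two restrictions, hence equal to $\rho$, which forces $\Lambda'=\Lambda$. Thus $(\YY,\bPhi)$ satisfies the defining universal property against every compact test space.

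Finally, knowing from Theorem~\ref{thm:exMPO} that $\MM_{\PP,\OO}$ exists and is compact, I would identify the two pairs. Because $\MM_{\PP,\OO}$ and $\YY$ are both compact, only compact test spaces are ever needed: the universal property of $(\YY,\bPhi)$ applied to $\bPhi_{\PP,\OO}$ gives $\mu\colon\sD\to\C(\MM_{\PP,\OO})$ with $(\id\tens\mu)\comp\bPhi=\bPhi_{\PP,\OO}$, and that of $\MM_{\PP,\OO}$ applied to $\bPhi$ gives $\Lambda\colon\C(\MM_{\PP,\OO})\to\sD$ with $(\id\tens\Lambda)\comp\bPhi_{\PP,\OO}=\bPhi$. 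The composites $\mu\comp\Lambda$ and $\Lambda\comp\mu$ intertwine $\bPhi_{\PP,\OO}$ and $\bPhi$ with themselves, so by the respective uniqueness clauses they equal the identities, making $\mu$ and $\Lambda$ mutually inverse isomorphisms. Under this identification $\bPhi_{\PP,\OO}$ becomes $\bPhi=\Gamma^{-1}\comp j_\OO$, i.e.~the inclusion $\C(\OO)\to\C(\PP)\fp\C(\OO)$ followed by the Pisier isomorphism, as claimed. The main obstacle I anticipate is the bookkeeping that links the two commutant pictures—checking that $\rho$ carries the relative commutant $\sD\subset\sC$ into the tensor leg $\I\tens\C(\XX)$ and that this matches $\id\tens\Lambda$ under $\Gamma$—together with keeping the direction of the Pisier isomorphism consistent throughout; the underlying free-product and commutant facts are themselves routine.
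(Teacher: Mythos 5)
Your proposal is correct and follows essentially the same route as the paper: apply Lemma \ref{lem:Pisier} to the inclusion $\C(\PP)\hookrightarrow\C(\PP)\fp\C(\OO)$, use the free-product universal property to produce a map $\rho$ out of $\C(\PP)\fp\C(\OO)$, observe that $\rho$ carries the relative commutant $\sD$ into $\I\tens\C(\XX)$, and deduce the universal property of $(\sD,\Gamma^{-1}\comp j_\OO)$ together with uniqueness of $\Lambda$. The only real divergence is that you verify the universal property against compact test spaces only and then invoke Theorem \ref{thm:exMPO} to identify the two universal pairs, whereas the paper runs the same argument against arbitrary $\XX$ (landing in $\C(\PP)\tens\M(\C_0(\XX))$), which is why its version also serves as an independent existence proof of $\MM_{\PP,\OO}$ (cf.\ Remark \ref{rem:existenceCMPO}); for the proposition as stated your shortcut is perfectly adequate.
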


\begin{proof}
Let $\XX$ be a quantum space and $\Psi\in\Mor(\C(\OO),\C(\PP)\tens\C_0(\XX))$ a quantum family of maps $\PP\to\OO$. Furthermore let $\sD$ denote the relative commutant of $\C(\PP)$ in $\C(\PP)\fp\C(\OO)$.

By the universal property of free products there exists a unital $*$-homomorphism $\Pi\colon\C(\PP)\fp\C(\OO)\to\C(\PP)\tens\M(\C_0(\XX))$ such that $\Pi(\imath_1(x))=x\tens\I$ for all $x\in\C(\PP)$ and $\Pi(\imath_2(y))=\Psi(y)$ for all $y\in\C(\OO)$. Note that $\Pi\in\Mor(\C(\PP)\fp\C(\OO),\C(\PP)\tens\C_0(\XX))$.

Now if $d$ belongs to $\sD$ then $\Pi(d)(x\tens\I)=(x\tens\I)\Pi(d)$ for all $d\in\C(\PP)$ because
\[
\Pi(d)(x\tens\I)=\Pi(d)\Pi\bigl(\imath_1(x)\bigr)=\Pi\bigl(d\imath_1(x)\bigr)=\Pi\bigl(\imath_1(x)d\bigr)=(x\tens\I)\Pi(d).
\]
Therefore the element $\Pi(d)$ belongs to the commutant of $\C(\PP)\tens\I$ in $\C(\PP)\tens\M(\C_0(\XX))$, i.e.~to $\I\tens\M(\C_0(\XX))$. As a consequence we can define a unital $*$-homomorphism $\Lambda\colon\sD\to\M(\C_0(\XX))$ by
\[
\Pi(d)=\I\tens\Lambda(d),\qquad{d}\in\sD
\]
and defining $\bPhi$ as the composition $\Gamma\comp\imath_2\colon\C(\OO)\to\C(\PP)\tens\sD$ we immediately get
\begin{equation}\label{eq:uniqDLambda}
(\id\tens\Lambda)\bigl(\bPhi(y)\bigr)=\Pi\bigl(\imath_2(y)\bigr)=\Psi(y),\qquad{y}\in\C(\OO).
\end{equation}
Since $\sD$ is obviously generated by slices of the form $(\omega\tens\id)\bPhi(y)$ ($y\in\C(\OO)$, $\omega\in\C(\PP)^*$), we immediately see that $\Lambda$ is uniquely determined by \eqref{eq:uniqDLambda}. It follows that $(\sD,\bPhi)$ has the universal property of $(\C(\MM_{\PP,\OO}),\bPhi_{\PP,\OO})$.
\end{proof}

\begin{corollary}\label{cor:DCMPO}
Let $\OO$ be a compact quantum space and $\PP$ a finite quantum space with
\[
\C(\PP)=\bigoplus_{i=1}^m\Mat_{n_i}(\CC).
\]
Then the \cst-algebra $\C(\MM_{\PP,\OO})$ is a free product $\sD_1\fp\dotsm\fp\sD_m$, where $\sD_i$ is the relative commutant of $\Mat_{n_i}(\CC)$ in $\Mat_{n_i}(\CC)\fp\C(\OO)$.
\end{corollary}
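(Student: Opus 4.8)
The plan is to reduce everything to the two facts already at hand: Proposition \ref{freePP}, which turns a disjoint sum in the first variable into a free product, and Proposition \ref{prop:sD}, which identifies $\C(\MM_{\PP,\OO})$ with a relative commutant when $\C(\PP)$ is a single matrix block. First I would record the translation between the algebraic data and the geometric picture: the direct sum decomposition $\C(\PP)=\bigoplus_{i=1}^m\Mat_{n_i}(\CC)$ says exactly that $\PP=\PP_1\sqcup\dotsm\sqcup\PP_m$, where $\PP_i$ is the finite quantum space with $\C(\PP_i)=\Mat_{n_i}(\CC)$ and the direct summands are recovered via the projections $\mathrm{p}_i\colon\C(\PP)\to\C(\PP_i)$, just as in the discussion preceding Proposition \ref{freePP}.

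Next I would argue by induction on $m$. The base case $m=1$ is precisely the content of Proposition \ref{prop:sD}: $\C(\MM_{\PP_1,\OO})$ is the relative commutant $\sD_1$ of $\Mat_{n_1}(\CC)$ in $\Mat_{n_1}(\CC)\fp\C(\OO)$, and a free product of a single factor is that factor itself. For the inductive step I would write $\PP=(\PP_1\sqcup\dotsm\sqcup\PP_{m-1})\sqcup\PP_m$, noting that $\PP_1\sqcup\dotsm\sqcup\PP_{m-1}$ is again a finite quantum space so that Proposition \ref{freePP} applies and yields
\[
\C(\MM_{\PP,\OO})\cong\C(\MM_{\PP_1\sqcup\dotsm\sqcup\PP_{m-1},\OO})\fp\C(\MM_{\PP_m,\OO}).
\]
By the inductive hypothesis the first factor is isomorphic to $\sD_1\fp\dotsm\fp\sD_{m-1}$, while by Proposition \ref{prop:sD} the second factor is $\sD_m$. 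Invoking the associativity of the universal free product $\fp$ amalgamated over the units then gives $\C(\MM_{\PP,\OO})\cong\sD_1\fp\dotsm\fp\sD_m$, which is the desired conclusion.

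I do not expect any genuine obstacle here: the argument is essentially bookkeeping built on the two preceding propositions. The only points deserving a word of care are the associativity of $\fp$, which is a standard property of the amalgamated free product and is what legitimizes collapsing the iterated application of Proposition \ref{freePP} into a single $m$-fold free product, and the observation that each partial disjoint sum $\PP_1\sqcup\dotsm\sqcup\PP_{k}$ remains a finite quantum space so that the hypotheses of Proposition \ref{freePP} are met at every stage of the induction. If desired, one could also track the universal family $\bPhi_{\PP,\OO}$ explicitly through the isomorphism, using the formula for $\bPhi_{\PP_1\sqcup\PP_2,\OO}$ in Proposition \ref{freePP} together with the description of $\bPhi_{\PP_i,\OO}$ in Proposition \ref{prop:sD}, but for the stated identification of \cst-algebras this is not needed.
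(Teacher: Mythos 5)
Your argument is correct and is exactly the intended derivation: the paper states this as an immediate corollary of Proposition \ref{freePP} and Proposition \ref{prop:sD}, combined by iterating the disjoint-sum decomposition, which is precisely your induction. The points you flag (associativity of $\fp$ and finiteness of the partial disjoint sums) are indeed the only bookkeeping needed, and both hold.
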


\begin{remark}\label{rem:existenceCMPO}
The explicit description of $\C(\MM_{\PP,\OO})$ and $\bPhi_{\PP,\OO}$ in the case $\C(\PP)=\Mat_n(\CC)$ given in Proposition \ref{prop:sD} together with Proposition \ref{freePP} provides an alternative way to prove existence of $\C(\MM_{\PP,\OO})$ for arbitrary compact $\OO$ and finite $\PP$.
\end{remark}


\begin{theorem}
Let $\OO$ be a compact quantum space such that $\C(\OO)$ is residually finite dimensional and let $\PP$ be a finite quantum space. Then $\C(\MM_{\PP,\OO})$ is residually finite dimensional.
\end{theorem}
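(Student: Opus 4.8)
The plan is to reduce the statement to the two building blocks already at our disposal: the concrete description of $\C(\MM_{\PP,\OO})$ as an iterated free product of relative commutants (Corollary~\ref{cor:DCMPO}), and the stability of residual finite-dimensionality under the only two operations that occur there, namely passage to \cst-subalgebras and the formation of unital free products.

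First I would record the two permanence properties. A \cst-algebra is residually finite-dimensional exactly when its finite-dimensional representations separate its points; consequently, if $\sA$ is residually finite-dimensional and $\sB\subseteq\sA$ is a \cst-subalgebra, then the restrictions to $\sB$ of a separating family of finite-dimensional representations of $\sA$ still separate the points of $\sB$, so $\sB$ is residually finite-dimensional as well. The second, and genuinely non-trivial, input is the theorem of Exel and Loring asserting that the unital free product (amalgamated over the units) of a family of residually finite-dimensional \cst-algebras is again residually finite-dimensional. This is the one external result the whole argument really rests on, and it is the main obstacle in the sense that without it the free-product structure of $\C(\MM_{\PP,\OO})$ would be of little use.

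Now write $\C(\PP)=\bigoplus_{i=1}^m\Mat_{n_i}(\CC)$. By Corollary~\ref{cor:DCMPO} we have $\C(\MM_{\PP,\OO})\cong\sD_1\fp\dotsm\fp\sD_m$, where $\sD_i$ is the relative commutant of $\Mat_{n_i}(\CC)$ inside $\Mat_{n_i}(\CC)\fp\C(\OO)$. In view of the Exel--Loring theorem it suffices to show that each $\sD_i$ is residually finite-dimensional. Fix $i$. Since $\OO$ is compact, $\C(\OO)$ is unital, and $\Mat_{n_i}(\CC)\fp\C(\OO)$ is the unital free product of $\Mat_{n_i}(\CC)$, which is finite-dimensional and hence trivially residually finite-dimensional, with $\C(\OO)$, which is residually finite-dimensional by hypothesis; by Exel--Loring it is therefore residually finite-dimensional. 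As $\sD_i$ is by definition a unital \cst-subalgebra of $\Mat_{n_i}(\CC)\fp\C(\OO)$, the subalgebra permanence property above shows that $\sD_i$ is residually finite-dimensional, and the proof is complete.

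Finally, I would note that the tensor decomposition $\Mat_{n_i}(\CC)\fp\C(\OO)\cong\Mat_{n_i}(\CC)\tens\sD_i$ of Lemma~\ref{lem:Pisier} furnishes an alternative route to the last step: residual finite-dimensionality passes between $\sD_i$ and $\Mat_{n_i}(\CC)\tens\sD_i\cong\Mat_{n_i}(\sD_i)$ in both directions, so one may deduce it for $\sD_i$ from the free product without invoking the subalgebra property at all. Either way, the only delicate ingredient is the behaviour of residual finite-dimensionality under free products.
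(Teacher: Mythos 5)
Your argument is correct and coincides with the paper's own proof: both invoke Corollary~\ref{cor:DCMPO} to present $\C(\MM_{\PP,\OO})$ as a free product of subalgebras of algebras of the form $\Mat_{n_i}(\CC)\fp\C(\OO)$, and then use the Exel--Loring theorem together with the passage of residual finite-dimensionality to subalgebras. The paper states this in two sentences; you have merely spelled out the same steps in more detail.
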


\begin{proof}
By Corollary \ref{cor:DCMPO} the \cst-algebra $\C(\MM_{\PP,\OO})$ is a free product of algebras which are subalgebras of free products of the form $\Mat_n(\CC)\fp\C(\OO)$. Since residual finite dimensionality passes to free products (\cite[Theorem 3.2]{ExelLoring}) and to subalgebras, $\C(\MM_{\PP,\OO})$ is residually finite dimensional.
\end{proof}

\begin{corollary}
For any finite quantum spaces $\PP$ and $\OO$ the \cst-algebra $\C(\MM_{\PP,\OO})$ possesses a faithful trace.
\end{corollary}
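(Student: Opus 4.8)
The plan is to reduce this to the preceding theorem together with the standard principle that a separable residually finite-dimensional (RFD) unital \cst-algebra carries a faithful tracial state. The starting observation is that since $\OO$ is finite, $\C(\OO)$ is finite-dimensional, hence unital (so $\OO$ is in particular compact) and trivially RFD, its identity representation being finite-dimensional and faithful. Thus the hypotheses of the previous theorem are satisfied and $\C(\MM_{\PP,\OO})$ is RFD.

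Next I would record that $\C(\MM_{\PP,\OO})$ is separable. By Corollary \ref{cor:DCMPO} it is a free product $\sD_1\fp\dotsm\fp\sD_m$, where each $\sD_i$ is a \cst-subalgebra of $\Mat_{n_i}(\CC)\fp\C(\OO)$. The latter free product is generated by finitely many elements (the matrix units of $\Mat_{n_i}(\CC)$ together with a finite generating set of the finite-dimensional algebra $\C(\OO)$), hence is separable; separability then passes to the subalgebra $\sD_i$ and to the finite free product $\sD_1\fp\dotsm\fp\sD_m$.

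With separability and RFD in hand, I would produce a \emph{countable} family $\{\pi_k\}_{k\geq1}$ of finite-dimensional representations $\pi_k\colon\C(\MM_{\PP,\OO})\to\Mat_{m_k}(\CC)$ separating points. Indeed, RFD provides a family $\{\pi_\lambda\}$ of finite-dimensional representations with $\sup_\lambda\|\pi_\lambda(a)\|=\|a\|$ for all $a$; choosing a countable dense subset $\{a_n\}$ of $\C(\MM_{\PP,\OO})$ and for each $n$ a representation $\pi_{\lambda_n}$ nearly attaining the norm on $a_n$ yields, by density, a countable subfamily that is still isometric and hence separating. Writing $\mathrm{tr}_{m_k}$ for the normalized trace on $\Mat_{m_k}(\CC)$, I would then set
\[
\tau=\sum_{k=1}^{\infty}2^{-k}\,(\mathrm{tr}_{m_k}\comp\pi_k).
\]
Each summand is a tracial state and the series converges in norm since $\sum_k2^{-k}=1$, so $\tau$ is a tracial state. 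If $\tau(a^*a)=0$ then every nonnegative summand vanishes, giving $\mathrm{tr}_{m_k}(\pi_k(a^*a))=0$; faithfulness of the normalized trace on matrices forces $\pi_k(a^*a)=0$, i.e.\ $\pi_k(a)=0$ for all $k$, whence $a=0$ by separation. Thus $\tau$ is faithful.

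I expect the only genuinely delicate point to be the passage from an arbitrary separating family of finite-dimensional representations to a countable one; everything else is immediate. Separability, established via Corollary \ref{cor:DCMPO}, is precisely what makes this reduction possible and what prevents the weighted sum of normalized traces from having to run over an uncountable index set.
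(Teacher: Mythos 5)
Your proof is correct and follows exactly the route the paper intends: the corollary is stated immediately after the residual finite-dimensionality theorem precisely so that it follows from the standard fact that a separable unital RFD \cst-algebra admits a faithful tracial state, which is what you carefully verify (including the separability of $\C(\MM_{\PP,\OO})$ and the reduction to a countable separating family). The paper leaves all of these details implicit, so your write-up is simply a fleshed-out version of its argument.
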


Let us note that existence of traces on $\C(\MM_{\PP,\OO})$ is important in the study of synchronous correlations (see Section \ref{sect:synchr}).

\subsection{Functorial properties}\label{FuncorialProperties}\hspace*{\fill}

In this section we will study the properties of the assignment
\begin{equation}\label{eq:POtoMPO}
(\PP,\OO)\longmapsto\MM_{\PP,\OO},
\end{equation}
where $\PP$ is a finite quantum space and $\OO$ is a compact quantum space. For brevity let us denote by $\QSf$ and $\QSc$ the full subcategories of the category of quantum spaces consisting of the finite and compact quantum spaces respectively.

The proof the next proposition easily follows from the universal property of $\MM_{\PP,\OO}$. 
\begin{proposition}
The mapping \eqref{eq:POtoMPO} is a bi-functor $\QSf\times\QSc\to\QSc$. Given $\PP_1,\PP_2\in\operatorname{Ob}(\QSf)$, $\OO_1,\OO_2\in\operatorname{Ob}(\QSc)$ and 
\[
\rho\colon\C(\PP_2)\longrightarrow\C(\PP_1),\quad\pi\colon\C(\OO_1)\longrightarrow\C(\OO_2),
\]
the associated map $\MM_{\rho,\pi}\colon\C(\MM_{\PP_1,\OO_1})\to\C(\MM_{\PP_2,\OO_2})$ is the unique $\Lambda$ making the diagram
\[
\xymatrix{
\C(\OO_1)\ar@{=}[d]\ar[rrr]^-{\bPhi_{\PP_1,\OO_1}}&&&\C(\PP_1)\tens\C(\MM_{\PP_1,\OO_1})\ar[d]^{\id\tens\Lambda}
\\
\C(\OO_1)\ar[rrr]^-{(\rho\tens\id)\comp\bPhi_{\PP_2,\OO_2}\comp\pi}&&&\C(\PP_1)\tens\C(\MM_{\PP_2,\OO_2})
}
\]
commutative. In particular the functor $\MM_{\tbullet,\tbullet}$ is contravariant with respect to the first variable and covariant with respect to the second one.
\end{proposition}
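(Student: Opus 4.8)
The plan is to reduce the entire statement to repeated applications of the universal property of $(\MM_{\PP,\OO},\bPhi_{\PP,\OO})$, using $\MM_{\PP_2,\OO_2}$ as the indexing space $\XX$. Concretely, given $\rho\colon\C(\PP_2)\to\C(\PP_1)$ and $\pi\colon\C(\OO_1)\to\C(\OO_2)$, I would first form the composite
\[
\Psi:=(\rho\tens\id)\comp\bPhi_{\PP_2,\OO_2}\comp\pi\colon\C(\OO_1)\longrightarrow\C(\PP_1)\tens\C(\MM_{\PP_2,\OO_2}).
\]
Since each $\PP_i$ is finite and each $\OO_i$ is compact, all the \cst-algebras in sight are unital and every morphism is a unital $*$-homomorphism; as $\rho\tens\id$ is the (minimal) tensor product of such homomorphisms, and $\bPhi_{\PP_2,\OO_2}$, $\pi$ are morphisms, the composite $\Psi$ is again a unital $*$-homomorphism, hence a genuine quantum family of maps from $\PP_1$ to $\OO_1$ indexed by $\MM_{\PP_2,\OO_2}$ in the sense of Definition~\ref{def:qfam}. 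Applying the universal property of $\bPhi_{\PP_1,\OO_1}$ to $\Psi$ with $\XX=\MM_{\PP_2,\OO_2}$ then produces a unique $\Lambda\in\Mor(\C(\MM_{\PP_1,\OO_1}),\C(\MM_{\PP_2,\OO_2}))$ with $(\id\tens\Lambda)\comp\bPhi_{\PP_1,\OO_1}=\Psi$, which is exactly the commuting diagram in the statement. I set $\MM_{\rho,\pi}:=\Lambda$; its existence and uniqueness are thereby immediate.

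It remains to verify the two functoriality axioms, and for both the strategy is identical: exhibit a candidate morphism, check it satisfies the defining intertwining relation, and invoke the uniqueness clause of the universal property to conclude it equals the functorial map in question. For identities, with $\rho=\id$ and $\pi=\id$ the defining relation reduces to $(\id\tens\Lambda)\comp\bPhi_{\PP,\OO}=\bPhi_{\PP,\OO}$, which is solved by $\Lambda=\id$, so uniqueness forces $\MM_{\id,\id}=\id$. For composition, given further $\rho'\colon\C(\PP_3)\to\C(\PP_2)$ and $\pi'\colon\C(\OO_2)\to\C(\OO_3)$, I would show that $\MM_{\rho',\pi'}\comp\MM_{\rho,\pi}$ satisfies the intertwining relation characterizing $\MM_{\rho\comp\rho',\pi'\comp\pi}$. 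Substituting the definitions of $\MM_{\rho,\pi}$ and $\MM_{\rho',\pi'}$ into $(\id\tens(\MM_{\rho',\pi'}\comp\MM_{\rho,\pi}))\comp\bPhi_{\PP_1,\OO_1}$, and using that the two tensor legs act independently (so that $\rho\tens\id$ and $\id\tens\MM_{\rho',\pi'}$ commute) together with functoriality of the tensor product in the first leg, $(\rho\tens\id)\comp(\rho'\tens\id)=(\rho\comp\rho')\tens\id$, one arrives at $((\rho\comp\rho')\tens\id)\comp\bPhi_{\PP_3,\OO_3}\comp(\pi'\comp\pi)$. Uniqueness then yields $\MM_{\rho\comp\rho',\pi'\comp\pi}=\MM_{\rho',\pi'}\comp\MM_{\rho,\pi}$. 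Reading off the order in which the factors compose shows that the assignment reverses arrows in the $\PP$-slot and preserves them in the $\OO$-slot, i.e.\ contravariance in the first and covariance in the second variable; together with Theorem~\ref{thm:exMPO}, which guarantees $\MM_{\PP,\OO}\in\operatorname{Ob}(\QSc)$, this establishes the bifunctor claim.

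The argument is essentially a bookkeeping exercise and involves no hard analysis. The only points requiring a little care are (i) confirming that $\Psi$ really is a morphism of \cst-algebras so that the universal property applies---this is where non-degeneracy and unitality of morphisms enter, though here it is painless because everything is unital---and (ii) keeping the variances and the order of composition straight so that the tensor-leg commutation identity is applied on the correct factors. I expect (ii) to be the most error-prone step, but it becomes routine once the domains and codomains of all the maps are written out explicitly.
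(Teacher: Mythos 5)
Your proposal is correct and is precisely the argument the paper has in mind: the paper omits the proof, remarking only that it ``easily follows from the universal property of $\MM_{\PP,\OO}$,'' and your write-up supplies exactly that verification (existence and uniqueness of $\Lambda$ from the universal property applied to $\Psi=(\rho\tens\id)\comp\bPhi_{\PP_2,\OO_2}\comp\pi$, followed by the identity and composition checks via uniqueness). The leg-commutation step $(\id\tens\MM_{\rho',\pi'})\comp(\rho\tens\id)=(\rho\tens\id)\comp(\id\tens\MM_{\rho',\pi'})$ is applied on the correct factors, so the composition law and the stated variances come out right.
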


\begin{remark}\label{rem:exiso}
It is an obvious consequence of the functoriality of $\MM_{\tbullet,\tbullet}$ that if $\PP$ and $\PP'$ are finite quantum spaces such that $\C(\PP)\cong\C(\PP')$ then for any compact quantum space $\OO$ we have $\C(\MM_{\PP,\OO})\cong\C(\MM_{\PP',\OO})$. Similarly if $\C(\OO)\cong\C(\OO')$ for some compact quantum spaces $\OO$ and $\OO'$ then $\C(\MM_{\PP,\OO})\cong\C(\MM_{\PP,\OO'})$ for any finite quantum space $\PP$.
\end{remark}

\newcounter{c}
\begin{theorem}\label{thm:functoriality}
Let $\PP,\PP_1,\PP_2\in\operatorname{Ob}(\QSf)$ and $\OO,\OO_1,\OO_2\in\operatorname{Ob}(\QSc)$ and
\[
\rho\colon\C(\PP_2)\longrightarrow\C(\PP_1),\quad\pi\colon\C(\OO_1)\longrightarrow\C(\OO_2).
\]
Then 
\begin{enumerate}
\item\label{functoriality:1} if $\pi$ is surjective then so is $\MM_{\id,\pi}\colon\C(\MM_{\PP,\OO_1})\to \C(\MM_{\PP,\OO_2})$,
\item\label{functoriality:2} if $\rho$ is injective then $\MM_{\rho,\id}\colon\C(\MM_{\PP_1,\OO})\to \C(\MM_{\PP_2,\OO})$ is surjective.\setcounter{c}{\value{enumi}}
\end{enumerate}
Moreover,
\begin{enumerate}
\setcounter{enumi}{\value{c}}
\item\label{functoriality:3} if $\pi$ is injective then so is $\MM_{\id,\pi}\colon\C(\MM_{\PP,\OO_1})\to \C(\MM_{\PP,\OO_2})$,
\item\label{functoriality:4} if $\rho$ is surjective then $\MM_{\rho,\id}\colon\C(\MM_{\PP_1,\OO})\to \C(\MM_{\PP_2,\OO})$ is injective.
\end{enumerate}
\end{theorem}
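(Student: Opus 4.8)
The plan is to dispatch the two surjectivity statements \eqref{functoriality:1} and \eqref{functoriality:2} by a generating-set argument, and to reduce the two injectivity statements \eqref{functoriality:4} and \eqref{functoriality:3} to structural properties of the free product $\fp$. Throughout I would use that the image of a $*$-homomorphism of \cst-algebras is a (closed) \cst-subalgebra, so a map is onto as soon as its image contains a generating set, and I would exploit the intertwining relation defining $\MM_{\rho,\pi}$. For \eqref{functoriality:1} set $\rho=\id$, so that the relation becomes $(\id\tens\MM_{\id,\pi})\comp\bPhi_{\PP,\OO_1}=\bPhi_{\PP,\OO_2}\comp\pi$; slicing the first leg by $\omega\in\C(\PP)^*$ gives $\MM_{\id,\pi}\bigl((\omega\tens\id)\bPhi_{\PP,\OO_1}(a)\bigr)=(\omega\tens\id)\bPhi_{\PP,\OO_2}(\pi(a))$. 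When $\pi$ is onto, every $b\in\C(\OO_2)$ equals $\pi(a)$, so by Theorem \ref{thm:exMPO} every generator of $\C(\MM_{\PP,\OO_2})$ lies in the image and $\MM_{\id,\pi}$ is onto. For \eqref{functoriality:2} set $\pi=\id$ and slice by $\psi\in\C(\PP_1)^*$ to obtain $\MM_{\rho,\id}\bigl((\psi\tens\id)\bPhi_{\PP_1,\OO}(a)\bigr)=\bigl((\psi\comp\rho)\tens\id\bigr)\bPhi_{\PP_2,\OO}(a)$; since $\C(\PP_1)$ and $\C(\PP_2)$ are finite-dimensional and $\rho$ is injective, the transpose $\psi\mapsto\psi\comp\rho$ maps $\C(\PP_1)^*$ onto $\C(\PP_2)^*$, so again every generator of $\C(\MM_{\PP_2,\OO})$ is hit.

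For \eqref{functoriality:4} I would first reduce the surjection $\rho$ to a projection: a surjective unital $*$-homomorphism of finite-dimensional \cst-algebras identifies its target, up to isomorphism, with a direct summand of its source, so by Remark \ref{rem:exiso} I may assume $\PP_2=\PP_1\sqcup\PP'$ and $\rho=\mathrm{p}_1\colon\C(\PP_1)\oplus\C(\PP')\to\C(\PP_1)$. By Proposition \ref{freePP} one has $\C(\MM_{\PP_2,\OO})=\C(\MM_{\PP_1,\OO})\fp\C(\MM_{\PP',\OO})$, and by Remark \ref{remi2} the map $\MM_{\mathrm{p}_1,\id}$ is precisely the canonical inclusion $\imath_1$ of the first free factor. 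Since the embedding of a free factor into a unital free product is injective (amplify a faithful representation of $\C(\MM_{\PP_1,\OO})$ and pair it with any representation of $\C(\MM_{\PP',\OO})$ on the same Hilbert space), $\MM_{\rho,\id}$ is injective.

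The crux is \eqref{functoriality:3}. Writing $\C(\PP)=\bigoplus_{i=1}^m\Mat_{n_i}(\CC)$ and $\PP=\PP_1\sqcup\dots\sqcup\PP_m$ with $\C(\PP_i)=\Mat_{n_i}(\CC)$, Proposition \ref{freePP} identifies $\C(\MM_{\PP,\OO_j})$ with $\C(\MM_{\PP_1,\OO_j})\fp\dots\fp\C(\MM_{\PP_m,\OO_j})$; bifunctoriality together with Remark \ref{remi2} (which realizes the free factors as the maps $\imath_i=\MM_{\mathrm{p}_i,\id}$) shows that $\MM_{\id,\pi}$ is the free product $\Fp_i\MM_{\id,\pi}^{(i)}$ of the matricial maps $\MM_{\id,\pi}^{(i)}\colon\C(\MM_{\PP_i,\OO_1})\to\C(\MM_{\PP_i,\OO_2})$. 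By Proposition \ref{prop:sD} each $\C(\MM_{\PP_i,\OO_j})$ is the relative commutant of $\Mat_{n_i}(\CC)$ inside $\Mat_{n_i}(\CC)\fp\C(\OO_j)$, and matching the explicit descriptions there shows $\MM_{\id,\pi}^{(i)}$ to be the restriction to that relative commutant of $\id_{\Mat_{n_i}(\CC)}\fp\pi$. Everything therefore reduces to one fact: the universal free product preserves injectivity, so that $\id\fp\pi$ is injective and, more generally, a free product of injective unital $*$-homomorphisms is injective. Granting this, each $\MM_{\id,\pi}^{(i)}$, being a restriction of the injective $\id\fp\pi$, is injective, and hence so is $\Fp_i\MM_{\id,\pi}^{(i)}=\MM_{\id,\pi}$.

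I expect this last fact to be the main obstacle, and I would establish it representation-theoretically. Given unital representations $\sigma$ of a unital \cst-algebra $\sB$ and $\tau$ of $A$ on a common Hilbert space $\sH$, where $\iota\colon A\to A'$ is a unital inclusion, I would extend $\tau$ to a u.c.p.~map on $A'$ by Arveson's extension theorem and take a Stinespring dilation $\rho\colon A'\to\B(\sK)$ with $\sH\subseteq\sK$. Since $\tau$ is multiplicative, $A$ lies in the multiplicative domain of the extension, which forces $\sH$ to reduce $\rho\comp\iota$ with $(\rho\comp\iota)|_{\sH}=\tau$. Amplifying $\sigma$ to a representation on $\sK$ for which $\sH$ is still reducing, the representation of $\sB\fp A'$ determined by $\sigma$ and $\rho$ restricts on the reducing subspace $\sH$ to the representation of $\sB\fp A$ determined by $\sigma$ and $\tau$. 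Hence the free product norm does not decrease under the inclusion $\sB\fp A\to\sB\fp A'$, which gives isometricity and thus injectivity; changing one free factor at a time then yields the statement for arbitrary free products. (Alternatively, injectivity of full free products is known and could simply be cited.)
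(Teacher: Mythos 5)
Your proposal is correct and follows essentially the same route as the paper: the generating-set/slicing argument for the two surjectivity statements, the reduction of \eqref{functoriality:4} to the inclusion of a free factor via Proposition \ref{freePP} and Remark \ref{remi2}, and the reduction of \eqref{functoriality:3} to the matricial case through Proposition \ref{prop:sD} and the injectivity of free products of injective unital $*$-homomorphisms. The only difference is that you sketch a (correct) Arveson--Stinespring proof of that last free-product fact, whereas the paper simply cites \cite[Theorem 4.2]{pullback}.
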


\begin{proof} 
Ad \eqref{functoriality:1}. Since $\pi$ is surjective, we have
\begin{align*}
\bigl\{(\omega\tens\id)\bPhi_{\PP,\OO_2}(y)\,\bigr|\bigl.\,y\in\C(\OO_2),&\:\omega\in\C(\PP)^*\bigr\}\\
&=\bigl\{(\omega\tens\id)\bPhi_{\PP,\OO_2}\bigl(\pi(x)\bigr)\,\bigr|\bigl.\,x\in\C(\OO_1),\:\omega\in\C(\PP)^*\bigr\}\\
&=\MM_{\id,\pi}\Bigl(
\bigl\{(\omega\tens\id)\bPhi_{\PP,\OO_1}(x)\,\bigr|\bigl.\,x\in\C(\OO_1),\:\omega\in\C(\PP)^*\bigr\}
\Bigr).
\end{align*}
Furthermore, since $\C(\MM_{\PP,\OO_2})$ is generated by the set on the left-hand side, we get that $\MM_{\id,\pi}$ is surjective.

Ad \eqref{functoriality:2}. We have
\begin{align*}
\MM_{\rho,\id}\Bigl(\bigl\{(\omega\tens\id)\bPhi_{\PP_1,\OO}(x)\,\bigr|\bigl.\,x\in\C(\OO),&\:\omega\in\C(\PP_1)^*\bigr\}\Bigr)\\
&=\bigl\{\bigl((\omega\comp\rho)\tens\id\bigr)\bPhi_{\PP_2,\OO}(x)\,\bigr|\bigl.\,x\in\C(\OO),\:\omega\in\C(\PP_1)^*\bigr\}\\
&=\bigl\{(\widetilde{\omega}\tens\id)\bPhi_{\PP_2,\OO}(x)\,\bigr|\bigl.\,x\in\C(\OO),\:\widetilde{\omega}\in\C(\PP_2)^*\bigr\}
\end{align*}
due to the injectivity of $\rho$. As the last set generates $\C(\MM_{\PP_2,\OO})$ we see that $\MM_{\rho,\id}$ is a surjection.

Ad \eqref{functoriality:3}
Let us first prove the statement \eqref{functoriality:3} in the case $\C(\PP)=\Mat_k(\CC)$. Let us denote $\sC_1 = \Mat_k(\CC)\fp \C(\OO_1)$ and $\sC_2 =\Mat_k(\CC)\fp \C(\OO_2)$. By \cite[Theorem 4.2]{pullback} the injective $*$-homomorphism $\pi$ defined an injective $*$-homomorphism $\id\fp\pi\colon\sC_1\to\sC_2$. Using the identifications
\begin{align*}
\sC_1 &= \Mat_k(\CC)\tens\C(\MM_{\PP,\OO_1})\\ 
\sC_2 &= \Mat_k(\CC)\tens\C(\MM_{\PP,\OO_2})
\end{align*}
(cf.~Proposition \ref{prop:sD}) we easily get $(\id\fp\pi)(x) =\MM_{\id,\pi}(x)$ for all $x\in\C(\MM_{\PP,\OO_1})$, and conclude the injectivity of $\MM_{\id,\pi}$. 

In the general case
\[
\C(\PP)=\bigoplus\limits_{i=1}^m\Mat_{k_i}(\CC)
\]
(i.e.~$\PP=\PP_1\sqcup\ldots\sqcup\PP_m$ with $\C(\PP_i)=\Mat_{k_i}(\CC)$) and the proof is completed by using Proposition \ref{freePP} and observing that 
$\MM_{\id_{\PP_1\sqcup\dotsm\sqcup\PP_m},\pi}=\MM_{\id_{\PP_1},\pi}\fp\dotsm\fp\MM_{\id_{\PP_m},\pi}$. Since $\MM_{\id_{\PP_i},\pi}$ are injective for $i=1,\ldots,m$, the $*$-homomorphism $\M_{\id_{\PP_1},\pi}\fp\dotsm\fp\M_{\id_{\PP_m},\pi}$ is injective (c.f. \cite[Theorem 4.2]{pullback}).
 
Ad \eqref{functoriality:4}. Since the \cst-algebras $\C(\PP_1)$ and $\C(\PP_2)$ are finite-dimensional, the fact that $\C(\PP_2)$ surjects onto $\C(\PP_1)$ means that $\PP_2=\PP'\sqcup\PP_1$ for some finite quantum space $\PP'$. More precisely, there exists an isomorphism $\sigma\colon\C(\PP')\oplus\C(\PP_1)\to\C(\PP_2)$ such that $\rho\comp\sigma\colon\C(\PP')\oplus\C(\PP_1)\to\C(\PP_1)$ is the projection onto $\C(\PP_1)$. In view of Remark \ref{remi2}
\[
\bigl((\rho\comp\sigma)\tens\id\bigr)\comp\bPhi_{\PP'\sqcup\PP_1,\OO}=(\id\tens\imath_1)\comp\bPhi_{\PP_1,\OO},
\]
where $\imath_1$ is the inclusion of $\C(\MM_{\PP_1,\OO})$ into $\C(\MM_{\PP'\sqcup\PP_1,\OO})=\C(\MM_{\PP',\OO})\fp\C(\MM_{\PP_1,\OO})$. Now the mapping $\MM_{\sigma,\id}\colon\C(\MM_{\PP_2,\OO})\to\C(\MM_{\PP'\sqcup\PP_1,\OO})$ is an isomorphism satisfying
\[
(\sigma\tens\id)\comp\bPhi_{\PP'\sqcup\PP_1,\OO}=(\id\tens\MM_{\sigma,\id})\comp\bPhi_{\PP_2,\OO}.
\]
It follows that
\[
(\rho\tens\id)\comp\bPhi_{\PP_2,\OO}=\bigl((\rho\comp\sigma)\tens(\MM_{\sigma,\id})^{-1}\bigr)\comp\bPhi_{\PP'\sqcup\PP_1,\OO}=\Bigl(\id\tens\bigl((\MM_{\sigma,\id_\OO})^{-1}\comp\imath_1\bigr)\Bigr)\comp\bPhi_{\PP_1,\OO}
\]
and consequently $\MM_{\rho,\id}=(\MM_{\sigma,\id})^{-1}\comp\imath_1$ is injective.
\end{proof}

\subsection{The opposite algebra}\label{sectOpposite}\hspace*{\fill}

Let $\XX$ be a quantum space. We will write $\XX^\op$ for the quantum space corresponding to the opposite \cst-algebra (cf.~\cite[Section 1.3.3]{dixmier}) of $\C_0(\XX)$. Thus if, for example, $\XX$ is compact then by definition $\C(\XX^\op)=\C(\XX)^\op$.

\begin{proposition}\label{prop:anti}
Let $\PP$ be a finite quantum space and $\OO$ a compact quantum space. Then the pair $(\MM_{\PP^\op,\OO^\op},\bPhi_{\PP^\op,\OO^\op})$ is naturally isomorphic to $({\MM_{\PP,\OO}}^\op,\bPhi_{\PP,\OO})$, where the map of vector spaces $\bPhi_{\PP,\OO}\colon\C(\OO)\to\C(\PP)\tens\C(\MM_{\PP,\OO})$ is regarded as a $*$-homomorphism $\C(\OO)^\op\to\C(\PP)^\op\tens\C(\MM_{\PP,\OO})^\op$.
\end{proposition}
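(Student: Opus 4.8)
\emph{Proof proposal.} The plan is to show that the single linear map $\bPhi_{\PP,\OO}$, reinterpreted through passage to opposite algebras, already satisfies the universal property that characterizes $\bPhi_{\PP^\op,\OO^\op}$; uniqueness of the quantum space of all maps then delivers the asserted isomorphism. First I would record the formal properties of the opposite construction. For a \cst-algebra $\sA$ the assignment $\sA\mapsto\sA^\op$ leaves the underlying $*$-vector space unchanged and reverses the product, so a $*$-homomorphism $\phi\colon\sA\to\sB$ is simultaneously a $*$-homomorphism $\sA^\op\to\sB^\op$; together with the natural identification $\M(\sA)^\op=\M(\sA^\op)$ this shows that $\tbullet^\op$ is a covariant functor on the category of \cst-algebras (with morphisms understood as in the footnotes) which is an involution, hence bijective on every $\Mor$-set. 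Moreover the identity on elementary tensors furnishes a natural isomorphism $(\sA\tens\sB)^\op\cong\sA^\op\tens\sB^\op$, since in both algebras the product of $a\tens b$ and $a'\tens b'$ equals $a'a\tens b'b$. Consequently the opposite functor carries a quantum family of maps $\Psi\in\Mor(\C(\OO),\C(\PP)\tens\C_0(\XX))$ from $\PP$ to $\OO$ indexed by $\XX$ to a quantum family in $\Mor(\C(\OO^\op),\C(\PP^\op)\tens\C_0(\XX^\op))$ from $\PP^\op$ to $\OO^\op$ indexed by $\XX^\op$, and this correspondence is a bijection (its inverse being the same functor applied once more).

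Viewing $\bPhi_{\PP,\OO}$ through this functor produces a morphism $\C(\OO^\op)\to\C(\PP^\op)\tens\C(\MM_{\PP,\OO}^\op)$, i.e.\ a quantum family of maps $\PP^\op\to\OO^\op$ indexed by $\MM_{\PP,\OO}^\op$, and I would verify that it has the universal property of $\bPhi_{\PP^\op,\OO^\op}$. Given any quantum space $\XX$ and any $\Psi\in\Mor(\C(\OO^\op),\C(\PP^\op)\tens\C_0(\XX))$, apply the opposite functor to obtain a family $\widetilde\Psi\in\Mor(\C(\OO),\C(\PP)\tens\C_0(\XX^\op))$ indexed by $\XX^\op$. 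The universal property of $\bPhi_{\PP,\OO}$ yields a unique $\widetilde\Lambda\in\Mor(\C(\MM_{\PP,\OO}),\C_0(\XX^\op))$ with $(\id\tens\widetilde\Lambda)\comp\bPhi_{\PP,\OO}=\widetilde\Psi$. Applying the opposite functor once more turns $\widetilde\Lambda$ into $\Lambda\in\Mor(\C(\MM_{\PP,\OO}^\op),\C_0(\XX))$, and since the functor is compatible with the tensor identification (sending $\id\tens\widetilde\Lambda$ to $\id\tens\Lambda$) and with composition, the relation for $\widetilde\Psi$ becomes precisely $(\id\tens\Lambda)\comp\bPhi_{\PP,\OO}=\Psi$, with $\bPhi_{\PP,\OO}$ now read as the map into $\C(\PP^\op)\tens\C(\MM_{\PP,\OO}^\op)$.

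Uniqueness of $\Lambda$ follows from uniqueness of $\widetilde\Lambda$: because $\tbullet^\op$ is an involution and a bijection on $\Mor$-sets, any two classifying morphisms for $\Psi$ would transport to two classifying morphisms for $\widetilde\Psi$, which must agree. Thus $(\MM_{\PP,\OO}^\op,\bPhi_{\PP,\OO})$ enjoys the universal property defining $(\MM_{\PP^\op,\OO^\op},\bPhi_{\PP^\op,\OO^\op})$, and the uniqueness of the quantum space of all maps produces an isomorphism $\bLambda\colon\C(\MM_{\PP^\op,\OO^\op})\to\C(\MM_{\PP,\OO})^\op$ intertwining the two universal families, which is exactly the assertion.

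The main thing to watch carefully is the bookkeeping of the natural isomorphism $(\sA\tens\sB)^\op\cong\sA^\op\tens\sB^\op$ and of $\M(\sA)^\op=\M(\sA^\op)$ inside the composition conventions for morphisms of \cst-algebras, which proceed through multiplier algebras (cf.\ the footnote to the definition of $\bPhi_{\PP,\OO}$); once these are confirmed to be natural and mutually compatible, every diagram chase above is purely formal. Note finally that passing to the opposite preserves finite-dimensionality and unitality, so $\PP^\op$ is finite and $\OO^\op$ is compact and $\MM_{\PP^\op,\OO^\op}$ indeed exists by Theorem \ref{thm:exMPO}.
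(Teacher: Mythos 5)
Your proposal is correct and follows essentially the same route as the paper: both transport a quantum family $\Psi\in\Mor(\C(\OO)^\op,\C(\PP)^\op\tens\C_0(\XX))$ through the opposite construction (the paper writes this via explicit identity-map anti-isomorphisms $\kappa$, you phrase it as an involutive covariant functor compatible with $\tens$ and $\M(\cdot)$), invoke the universal property of $\bPhi_{\PP,\OO}$, and transport back, with uniqueness following from the bijectivity of this correspondence. No substantive difference.
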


\begin{proof}
It is enough to show that the pair $({\MM_{\PP,\OO}}^\op,\bPhi_{\PP,\OO})$ with $\bPhi_{\PP,\OO}$ understood as a map $\C(\OO)^\op\to\C(\PP)^\op\tens\C(\MM_{\PP,\OO})^\op$ has the universal property of $(\MM_{\PP^\op,\OO^\op},\bPhi_{\PP^\op,\OO^\op})$. Thus let $\XX$ be a quantum space and let $\Psi\in\Mor(\C(\OO)^\op,\C(\PP)^\op\tens\C_0(\XX))$. Let
\[
\kappa_{\C(\OO)}\colon\C(\OO)\longrightarrow\C(\OO)^\op,\quad
\kappa_{\C(\PP)}\colon\C(\PP)\longrightarrow\C(\PP)^\op,\quad
\kappa_{\C_0(\XX)}\colon\C_0(\XX)\longrightarrow\C_0(\XX)^\op
\]
and
\[
\kappa_{\C(\MM_{\PP,\OO})}\colon\C(\MM_{\PP,\OO})\longrightarrow\C(\MM_{\PP,\OO})^\op
\]
be the identity maps of vector spaces considered as anti-isomorphisms between the respective \cst-algebras. Then
\[
({\kappa_{\C(\PP)}}^{-1}\tens\kappa_{\C_0(\XX)})\comp\Psi\comp\kappa_{\C(\OO)}=(\id\tens\Lambda)\comp\bPhi_{\PP,\OO}
\]
for a unique $\Lambda\in\Mor(\C(\MM_{\PP,\OO}),\C_0(\XX)^\op)$. In other words
\[
\Psi=\bigl(\id\tens({\kappa_{\C_0(\XX)}}^{-1}\comp\Lambda\comp{\kappa_{\C(\MM_{\PP,\OO})}}^{-1})\bigr)\comp\bigl((\kappa_{\C(\PP)}\tens\kappa_{\C(\MM_{\PP,\OO})})\comp\bPhi_{\PP,\OO}\comp{\kappa_{\C(\OO)}}^{-1}\bigr).
\]
Note that $(\kappa_{\C(\PP)}\tens\kappa_{\C(\MM_{\PP,\OO})})\comp\bPhi_{\PP,\OO}\comp{\kappa_{\C(\OO)}}^{-1}$ is precisely the map $\bPhi_{\PP,\OO}$ regarded as a $*$-homomorphism $\C(\OO)^\op\to\C(\PP)^\op\tens\C(\MM_{\PP,\OO})^\op$. It follows that $\Psi$ can be expressed as
\[
\Psi=(\id\tens\Lambda')\comp\bPhi_{\PP,\OO}
\]
(with the last map properly understood) for a unique $\Lambda'\in\Mor(\C(\MM_{\PP,\OO}),\C_0(\XX))$.
\end{proof}

Note that for any finite quantum space $\PP$ the \cst-algebra $\C(\PP)$ is isomorphic to $\C(\PP)^\op$, so by Remark \ref{rem:exiso}
\begin{equation}\label{PantiP}
\C(\MM_{\PP,\OO})\cong\C(\MM_{\PP^\op,\OO}).
\end{equation}

\begin{corollary}\label{CorOp}
Let $\OO$ be a compact quantum space such that $\C(\OO)$ is isomorphic to $\C(\OO)^\op$. Then for any finite quantum space $\PP$ the \cst-algebra $\C(\MM_{\PP,\OO})$ is isomorphic to $\C(\MM_{\PP,\OO})^\op$.
\end{corollary}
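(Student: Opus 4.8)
The plan is to deduce the statement from Proposition~\ref{prop:anti} together with Remark~\ref{rem:exiso} by stringing together a short chain of isomorphisms. The key observation is that the hypothesis $\C(\OO)\cong\C(\OO)^\op$ means precisely that $\OO^\op$ and $\OO$ have isomorphic function algebras, so that the second-variable functoriality recorded in Remark~\ref{rem:exiso} can be used to replace $\OO^\op$ by $\OO$; meanwhile the finiteness of $\PP$ automatically gives $\C(\PP)\cong\C(\PP)^\op$, which is exactly the input that already produced the isomorphism~\eqref{PantiP}.

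Concretely, I would begin from Proposition~\ref{prop:anti}, which identifies the pair $(\MM_{\PP^\op,\OO^\op},\bPhi_{\PP^\op,\OO^\op})$ with $({\MM_{\PP,\OO}}^\op,\bPhi_{\PP,\OO})$; on the level of \cst-algebras this yields
\[
\C(\MM_{\PP,\OO})^\op\cong\C(\MM_{\PP^\op,\OO^\op}).
\]
Next, applying Remark~\ref{rem:exiso} in its first-variable form to the isomorphism $\C(\PP^\op)=\C(\PP)^\op\cong\C(\PP)$, which holds because $\PP$ is finite (equivalently, invoking \eqref{PantiP} with $\OO$ replaced by $\OO^\op$), gives
\[
\C(\MM_{\PP^\op,\OO^\op})\cong\C(\MM_{\PP,\OO^\op}).
\]
Finally, the standing hypothesis $\C(\OO)\cong\C(\OO)^\op=\C(\OO^\op)$ lets me apply Remark~\ref{rem:exiso} in its second-variable form to obtain
\[
\C(\MM_{\PP,\OO^\op})\cong\C(\MM_{\PP,\OO}).
\]
Composing the three isomorphisms produces $\C(\MM_{\PP,\OO})^\op\cong\C(\MM_{\PP,\OO})$, which is the assertion.

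There is essentially no hard part here: every link in the chain has already been established, and the argument reduces to bookkeeping of which variable of $\MM_{\tbullet,\tbullet}$ is being manipulated at each step. The one point requiring a moment's care is to verify that the hypothesis of the corollary enters in exactly the right place --- it is needed only for the last isomorphism, where $\OO^\op$ is replaced by $\OO$ --- and that the finiteness of $\PP$ (rather than any additional assumption) is what legitimizes the middle step through $\C(\PP)\cong\C(\PP)^\op$. No new constructions or estimates are involved.
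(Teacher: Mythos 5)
Your proof is correct and is essentially identical to the paper's own argument: both chain together Proposition~\ref{prop:anti}, the isomorphism \eqref{PantiP} (applied with $\OO$ replaced by $\OO^\op$, using finiteness of $\PP$), and the second-variable functoriality of Remark~\ref{rem:exiso} together with the hypothesis $\C(\OO)\cong\C(\OO)^\op$. You merely traverse the chain in the opposite order, and your accounting of where each hypothesis is used is accurate.
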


\begin{proof}
By assumption and Remark \ref{rem:exiso} we have $\C(\MM_{\PP,\OO})\cong\C(\MM_{\PP,\OO^\op})$, so in view of \eqref{PantiP} and Proposition \ref{prop:anti} we get $\C(\MM_{\PP,\OO})\cong\C(\MM_{\PP^\op,\OO^\op})\cong\C(\MM_{\PP,\OO})^\op$.
\end{proof}

Note that if $\OO$ is a finite quantum space then $\C(\OO)$ is isomorphic to $\C(\OO)^\op$, so $\C(\MM_{\PP,\OO})$ is isomorphic to $\C(\MM_{\PP,\OO})^\op$ for all finite quantum spaces $\PP$ and $\OO$. In particular Corollary \ref{CorOp} is a generalization of \cite[Lemma 1.16]{BGH}.

\section{The universal operator system}\label{sect:SPO}

In this and following sections (in particular in the proof of Theorem \ref{LPprop}) we will use the notion of a (\emph{right/left}) \emph{multiplicative domain} of a completely positive map. Our terminology will be the same as in \cite[Chapter 3]{paulsen}.

Also we will freely use the fact that a u.c.p.~map $\psi\colon\sA\to\sB$ between unital \cst-algebras defines a non-degenerate completely positive map $\id\tens\psi\colon\cK\tens\sA\to\cK\tens\sB$, where $\cK$ denotes the \cst-algebra of compact operators on $\ell_2$. This map, in turn, extends uniquely by strict continuity to a u.c.p.~map between the multiplier algebras: $\M(\cK\tens\sA)\to\M(\cK\tens\sB)$. Additionally if, for any two \cst-algebras $\sC$ and $\sD$ and any $\omega\in\sC^*$ the slice map $\omega\tens\id$ extends uniquely to a mapping $\M(\sC\tens\sD)\to\M(\sD)$. All this is contained in \cite[Chapters 5 and 8]{lance}. 

\begin{lemma}\label{mult_dom_arg_K}
Let $\sA$ and $\sB$ be unital \cst-algebras. If $\ph\colon\sA\to\sB$ is a unital completely positive map and $a\in\M(\cK\tens\sA)$ belongs to the multiplicative domain of $\id\tens\ph\colon\M(\cK\tens\sA)\to\M(\cK\tens\sB)$ then for any $\omega\in\cK^*$ the element $(\omega\tens\id)(a)$ belongs to the multiplicative domain of $\ph$.
\end{lemma}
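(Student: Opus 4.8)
The plan is to reduce the statement about the multiplicative domain in $\M(\cK\tens\sA)$ to the defining property of the multiplicative domain of $\ph$ itself. Recall that the multiplicative domain of a unital completely positive map $\psi$ consists of those elements $a$ for which $\psi(a^*a)=\psi(a)^*\psi(a)$ and $\psi(aa^*)=\psi(a)\psi(a)^*$; equivalently (the Choi characterization we shall lean on) $a$ lies in the multiplicative domain if and only if $\psi(ba)=\psi(b)\psi(a)$ and $\psi(ab)=\psi(a)\psi(b)$ for every $b$ in the domain. So to show that $c:=(\omega\tens\id)(a)$ lies in the multiplicative domain of $\ph$, I would aim to verify $\ph(bc)=\ph(b)\ph(c)$ and $\ph(cb)=\ph(c)\ph(b)$ for all $b\in\sB$ (in fact all $b\in\sA$, testing against an arbitrary element of the algebra).

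First I would fix a rank-one functional $\omega=\omega_{\xi,\eta}\in\cK^*$ (given by $\omega_{\xi,\eta}(T)=\is{\eta}{T\xi}$ for $\xi,\eta\in\ell_2$), since such functionals span a norm-dense subspace of $\cK^*$ and both sides of the multiplicativity identities are norm-continuous in $\omega$; this reduces to the rank-one case by continuity. The key computational idea is to realize $(\omega_{\xi,\eta}\tens\id)$ concretely: writing $m_\xi\tens\I,\,m_\eta\tens\I$ for the obvious ``column'' elements built from $\xi,\eta$, one has $(\omega_{\xi,\eta}\tens\id)(a)=(m_\eta^*\tens\I)\,a\,(m_\xi\tens\I)$ inside $\M(\cK\tens\sA)$ after the standard identification (the ketbra sandwich that realizes a slice as a corner). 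The elements $m_\xi\tens\I$ and $m_\eta\tens\I$ are themselves in the multiplicative domain of $\id\tens\ph$ because they come from $\cK\tens\I$, on which $\id\tens\ph$ acts as $\id\tens\ph(\I)=\id$, hence trivially multiplicatively; this is the crucial observation that lets me absorb the slice operators into the multiplicative domain.

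With that in hand, I would use that the multiplicative domain of $\id\tens\ph$ is a subalgebra closed under the obvious module-type products: if $a$ is in the multiplicative domain and $m,m'$ come from the (trivially multiplicative) corner $\cK\tens\I$, then $m^*am'$ is again in the multiplicative domain and $(\id\tens\ph)(m^*am')=(\id\tens\ph)(m^*)\,(\id\tens\ph)(a)\,(\id\tens\ph)(m')$. Applying this and then slicing back with the appropriate vector functionals, the relations $\ph(c^*c)=\ph(c)^*\ph(c)$ and $\ph(cc^*)=\ph(c)\ph(c)^*$ should fall out by matching corners of the $\M(\cK\tens\sB)$-side computation with the corresponding corners on the algebra side. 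The main obstacle I anticipate is purely technical rather than conceptual: handling the multiplier-algebra and strict-continuity bookkeeping carefully, i.e.\ justifying that the slice maps $\omega\tens\id$ and the map $\id\tens\ph$ interact correctly on $\M(\cK\tens\sA)$ (including that $m_\xi\tens\I$, which is a genuine multiplier rather than an element of $\cK\tens\sA$, can legitimately be inserted into the multiplicative-domain identities). This is exactly the sort of manipulation for which the paper has already set up the needed framework in the paragraph preceding the lemma (extension of $\id\tens\ph$ to multipliers and of slice maps to $\M(\sC\tens\sD)\to\M(\sD)$), so I expect these points to be dispatched by invoking \cite[Chapters 5 and 8]{lance} together with the standard rank-one reduction.
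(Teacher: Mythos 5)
Your proof is correct, but it takes a genuinely different route from the one in the paper. Both arguments begin by reducing to finitely many rank\nobreakdash-one pieces of $\omega$ (legitimate because the multiplicative domain is a norm-closed linear subspace and $\omega\mapsto(\omega\tens\id)(a)$ is bounded and linear), but from there they diverge. The paper stays entirely with the bimodule formulation of the multiplicative domain: writing $l$ and $r$ for the left and right support projections of the finite-rank density $\rho$, so that $\omega(\cdot\,l)=\omega=\omega(r\,\cdot)$, it applies the hypothesis on $a$ to the test elements $l\tens z$ and $r\tens z$ with $z\in\sA$ arbitrary and then slices with $\omega\tens\id$, reading off $\ph(cz)=\ph(c)\ph(z)$ and $\ph(zc)=\ph(z)\ph(c)$ directly for $c=(\omega\tens\id)(a)$. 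You instead realize the slice as the two-sided compression $e_{11}\tens c=(m_\eta^*\tens\I)\,a\,(m_\xi\tens\I)$ by elements of $\cK\tens\I$, note that these compressors lie in the multiplicative domain of $\id\tens\ph$, and invoke Choi's theorem that the multiplicative domain is a \cst-subalgebra on which the map restricts to a $*$-homomorphism to conclude that $e_{11}\tens c$ lies in it; stripping off the corner $e_{11}$ then yields $\ph(c^*c)=\ph(c)^*\ph(c)$ and $\ph(cc^*)=\ph(c)\ph(c)^*$, as you indicate. Your version buys a cleaner conceptual statement (compression by multiplicative-domain elements stays in the multiplicative domain) and avoids testing against arbitrary $z$ inside $\M(\cK\tens\sA)$, at the price of invoking the subalgebra property of the multiplicative domain and the corner-matching at the end; the paper's version uses only the elementary one-sided identity $(\id\tens\ph)(ax)=(\id\tens\ph)(a)(\id\tens\ph)(x)$, with the support-projection trick playing the role of your ket-bra sandwich. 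The multiplier-algebra bookkeeping you flag is indeed covered by the framework recalled just before the lemma.
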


\begin{proof}
Any $\omega\in\cK^*$ is of the form $\omega=\operatorname{Tr}(\cdot\rho)=\operatorname{Tr}(\rho\cdot)$ for some trace-class operator $\rho$. Since any such $\omega$ can be approximated in the norm of $\cK^*$ by functionals of the same form with $\rho$ finite-dimensional, and the multiplicative domain of a u.c.p.~map is closed (\cite[Exercise $4.2(\text{iii})$]{paulsen}), it is enough to prove that $(\omega\tens\id)(a)$ is in the multiplicative domain of $\ph$ for such $\omega$.

Let $r$ and $l$ be the left and right support of $\rho$, i.e.~$l$ is the projection onto the range of $\rho$ (assumed to be finite-dimensional) and $r$ is the projection onto the range of $\rho^*$ (which is also finite-dimensional). Then for any $y\in\cK$ we have
\[
\omega(yl)=\operatorname{Tr}(yl\rho)=\operatorname{Tr}(y\rho)=\omega(y)\quad\text{and}\quad
\omega(ry)=\operatorname{Tr}(ry\rho)=\operatorname{Tr}(y\rho)
\]

Since $a$ is in the multiplicative domain of $\id\tens\ph$, for any $z\in\sA$ we have
\[
(\id\tens\ph)\bigl(a(l\tens{z})\bigr)=\bigl((\id\tens\ph)(a)\bigr)\bigl((\id\tens\ph)(l\tens{z})\bigr)=\bigl((\id\tens\ph)(a)\bigr)\bigl(l\tens\ph(z)\bigr).
\]
Applying $(\omega\tens\id)$ to both sides we obtain
\[
\ph\Bigl(\bigl((\omega\tens\id)(a)\bigr)z\Bigr)=\ph\bigl((\omega\tens\id)(a)\bigr)\ph(z)
\]
Similarly
\[
(\id\tens\ph)\bigl((r\tens{z})a\bigr)=\bigl((\id\tens\ph)(r\tens{z})\bigr)\bigl((\id\tens\ph)(a)\bigr)=\bigl(r\tens\ph(z)\bigr)\bigl((\id\tens\ph)(a)\bigr).
\]
and application of $(\omega\tens\id)$ to both sides yields
\[
\ph\Bigl(z\bigl((\omega\tens\id)(a)\bigr)\Bigr)=\ph(z)\ph\bigl((\omega\tens\id)(a)\bigr).
\]
Since this is true for all $z$, the element $(\omega\tens\id)(a)$ belongs to the multiplicative domain of $\ph$.
\end{proof}

From this section onward we will make frequent use of Kasparov's dilation theorem (\cite[Theorem 9]{jungeetal}, cf.~also \cite[Theorem 6.5]{lance}). One of its direct consequences is the following lemma:

\begin{lemma}\label{aux_ksp}
Let $\PP$ be a finite quantum space and $\OO$ a compact quantum space such that $\C(\OO)$ is separable. Let $\sB$ be a separable unital \cst-algebra and let $\psi\colon\C(\OO)\to\C(\PP)\tens\sB$ be a u.c.p.~map. Then there exists $\Psi\in\Mor(\C(\OO),\C(\PP)\tens\cK\tens\sB)$ such that
\[
\psi(x)=(\id\tens\omega_{1,1}\tens\id)\bigl(\Psi(x)\bigr),\qquad{x}\in\C(\OO),
\]
where $\omega_{1,1}$ is the functional ${a}\mapsto\is{e_1}{ae_1}$ on $\cK$ with $e_1$ denoting the first vector of the standard basis of $\ell_2$.
\end{lemma}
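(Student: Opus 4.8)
The plan is to recognize the asserted identity as the statement that $\psi$ is the $(1,1)$-corner of a genuine $*$-homomorphism, and to obtain such a homomorphism from Kasparov's dilation theorem, the rest being a rearrangement of tensor legs. First I would set $\sC=\C(\PP)\tens\sB$, which is a separable unital \cst-algebra (separable because $\C(\PP)$ is finite-dimensional and $\sB$ is separable), and regard $\psi$ as a u.c.p.\ map from the separable \cst-algebra $\C(\OO)$ into $\sC=\M(\sC)$. Kasparov's dilation theorem then supplies a $*$-homomorphism $\pi\colon\C(\OO)\to\M(\cK\tens\sC)$ which is unital---equivalently, since $\C(\OO)$ is unital, non-degenerate, hence a morphism---and such that $\psi$ is recovered by compression to the first corner, namely $\psi(x)=(\omega_{1,1}\tens\id_\sC)\bigl(\pi(x)\bigr)$ for all $x\in\C(\OO)$, where $\omega_{1,1}\tens\id_\sC\colon\M(\cK\tens\sC)\to\M(\sC)=\sC$ is the slice map (well defined on multipliers as recalled above).

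Next I would transport this along the flip isomorphism $\theta\colon\cK\tens\sC=\cK\tens\C(\PP)\tens\sB\to\C(\PP)\tens\cK\tens\sB$ that exchanges the first two tensor factors. Since $\C(\PP)$ is unital and finite-dimensional, both multiplier algebras factor, $\M(\cK\tens\C(\PP)\tens\sB)=\C(\PP)\tens\M(\cK\tens\sB)$ and likewise after the flip, so $\theta$ extends to an isomorphism of the corresponding multiplier algebras, which I still denote $\theta$. Setting $\Psi=\theta\comp\pi$ produces a morphism $\Psi\in\Mor\bigl(\C(\OO),\C(\PP)\tens\cK\tens\sB\bigr)$ (it is unital, hence non-degenerate, as a composition of $\pi$ with an isomorphism). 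A direct check on elementary tensors gives $(\id\tens\omega_{1,1}\tens\id)\comp\theta=\omega_{1,1}\tens\id_\sC$, whence $(\id\tens\omega_{1,1}\tens\id)\bigl(\Psi(x)\bigr)=(\omega_{1,1}\tens\id_\sC)\bigl(\pi(x)\bigr)=\psi(x)$ for all $x\in\C(\OO)$, which is exactly the required identity.

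The only genuine content beyond this bookkeeping is obtaining the corner form with a \emph{unital} (equivalently non-degenerate) dilating homomorphism, and this is precisely what the cited form of the theorem delivers. If instead one only has a Stinespring triple $(\tilde{E},\pi,V)$ with $\psi=V^*\pi(\cdot)V$ and $V$ an isometry, I would invoke Kasparov's stabilization theorem to realize $\tilde{E}$ together with the standard module $H_\sC=\ell_2\tens\sC$, for which $\mathcal{L}(H_\sC)=\M(\cK\tens\sC)$, and observe that the inclusion $W\colon\sC\to H_\sC$ onto the first coordinate satisfies $W^*TW=(\omega_{1,1}\tens\id_\sC)(T)$. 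After stabilizing, the ranges of $V$ and $W$ have orthogonal complements isomorphic to $H_\sC$, so there is a unitary $U\in\mathcal{L}(H_\sC)$ with $UW=V$; replacing $\pi$ by $\Ad_{U^*}\comp\pi$ turns the compression by $W$ into $\psi$ while keeping a $*$-homomorphism. Ensuring that this dilating homomorphism can be taken unital/non-degenerate is the one step requiring care; once it is in hand, the identification of the slice maps under the flip $\theta$ is entirely routine.
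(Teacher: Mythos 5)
Your proposal is correct and follows exactly the route the paper intends: the paper offers no written proof, presenting the lemma as a direct consequence of Kasparov's dilation theorem (cited as \cite[Theorem 9]{jungeetal} and \cite[Theorem 6.5]{lance}), which is precisely your main step. The remaining bookkeeping you carry out --- the flip of tensor legs using that $\M(\cK\tens\C(\PP)\tens\sB)=\C(\PP)\tens\M(\cK\tens\sB)$ for finite-dimensional unital $\C(\PP)$, and the care about non-degeneracy of the dilating homomorphism --- is exactly what is implicitly required and is handled correctly.
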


In what follows, given $\Psi\in\Mor(\C(\OO),\C(\PP)\tens\cK\tens\sB)$ we will denote mappings of the form
\[
x\longmapsto(\id\tens\omega_{1,1}\tens\id)\bigl(\Psi(x)\bigr),\qquad{x}\in\C(\OO)
\]
described in Lemma \ref{aux_ksp} by $\Psi_{1,1}$.

Consider the universal quantum family of maps $\bPhi_{\PP,\OO}\colon\CC(\OO)\to\C(\PP)\tens\C(\MM_{\PP,\OO})$ and let $\SS_{\PP,\OO}$ be the closed linear span of the slices of $\bPhi_{\PP,\OO}$ i.e.
\begin{equation}\label{newRemSPO}
\SS_{\PP,\OO}=\overline{\operatorname{span}}\bigl\{(\omega\tens\id)\bPhi_{\PP,\OO}(x)\,\bigr|\bigl.\,x\in\C(\OO),\:\omega\in\C(\PP)^*\bigr\}.
\end{equation}
Then $\SS_{\PP,\OO}$ is an operator system. Moreover $\SS_{\PP,\OO}$ comes equipped with a map 
\begin{equation}\label{newRembph}
\bph_{\PP,\OO}\colon\C(\OO)\longrightarrow\C(\PP)\tens\SS_{\PP,\OO}
\end{equation}
defined as $\bPhi_{\PP,\OO}$ viewed as a map $\C(\OO)\to\C(\PP)\tens\SS_{\PP,\OO}$. This map is clearly u.c.p. In the next theorem we shall prove that $\SS_{\PP,\OO}$ has an interesting universal property.

\begin{theorem}\label{thm_op_syst}
Let $\PP$ be a finite quantum space and $\OO$ a compact quantum space such that $\C(\OO)$ is separable. Let $\SS_{\PP,\OO}\subset\C(\MM_{\PP,\OO})$ be the operator system equipped with the u.c.p.~map $\bph_{\PP,\OO}\colon\C(\OO)\to\C(\PP)\tens\SS_{\PP,\OO}$ described by \eqref{newRemSPO} and \eqref{newRembph}. For any operator system $\sS$ and any u.c.p.~map $\psi\colon\C(\OO)\to\C(\PP)\tens\sS$ there exists a unique u.c.p.~map $\lambda\colon\SS_{\PP,\OO}\to\sS$ such that the diagram
\[
\xymatrix{
\C(\OO)\ar[rr]^-{\bph_{\PP,\OO}}\ar@{=}[d]&&\C(\PP)\tens\SS_{\PP,\OO}\ar[d]^{\id\tens\lambda}\\
\C(\OO)\ar[rr]^-\psi&&\C(\PP)\tens\sS
}
\]
commutes.
\end{theorem}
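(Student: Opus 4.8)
The plan is to reduce this operator-system universal property to the already-established universal property of the pair $(\MM_{\PP,\OO},\bPhi_{\PP,\OO})$ by dilating the given u.c.p.\ map $\psi$ to a genuine morphism (via Lemma \ref{aux_ksp}), applying the existence theorem for $\C(\MM_{\PP,\OO})$, and then slicing back down. Uniqueness requires no dilation: if $(\id\tens\lambda)\comp\bph_{\PP,\OO}=\psi$, then slicing the first leg by an arbitrary $\omega\in\C(\PP)^*$ and using that $\omega$ and $\lambda$ act on disjoint tensor legs yields $\lambda\bigl((\omega\tens\id)\bPhi_{\PP,\OO}(x)\bigr)=(\omega\tens\id)\psi(x)$ for all $x\in\C(\OO)$ and $\omega\in\C(\PP)^*$. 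By \eqref{newRemSPO} these elements span a dense subspace of $\SS_{\PP,\OO}$, so the bounded map $\lambda$ is completely determined; this same formula dictates what $\lambda$ must be and, crucially, forces its range into $\sS$.

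For existence I would first arrange a separable target so that Lemma \ref{aux_ksp} applies. Replace $\sS$ by the operator subsystem $\sS_0=\overline{\operatorname{span}}\bigl\{(\omega\tens\id)\psi(x)\,\bigr|\bigl.\,x\in\C(\OO),\:\omega\in\C(\PP)^*\bigr\}$, which is unital (as $\psi$ is u.c.p.) and self-adjoint, is separable because $\C(\OO)$ is separable and $\C(\PP)^*$ is finite-dimensional, and into which $\psi$ already maps (since $\C(\PP)$ is finite-dimensional, $\psi(x)=\sum_k f_k\tens(\omega_k\tens\id)\psi(x)$ for a basis $\{f_k\}$ of $\C(\PP)$ with dual basis $\{\omega_k\}$). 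Embedding $\sS_0$ completely order isomorphically into the unital \cst-algebra $\sB$ it generates in a faithful representation produces a separable unital $\sB$ and a u.c.p.\ map $\psi\colon\C(\OO)\to\C(\PP)\tens\sB$. By Lemma \ref{aux_ksp} there is $\Psi\in\Mor(\C(\OO),\C(\PP)\tens\cK\tens\sB)$ with $\psi=\Psi_{1,1}$.

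Now comes the heart of the argument: $\Psi$ is a quantum family of maps from $\PP$ to $\OO$ indexed by the quantum space $\XX$ with $\C_0(\XX)=\cK\tens\sB$, so the universal property of $\bPhi_{\PP,\OO}$ supplies a unique $\Lambda\in\Mor(\C(\MM_{\PP,\OO}),\cK\tens\sB)$ with $(\id\tens\Lambda)\comp\bPhi_{\PP,\OO}=\Psi$. I would then define $\lambda=(\omega_{1,1}\tens\id)\comp\Lambda|_{\SS_{\PP,\OO}}$, which is u.c.p.\ as a composition of the unital $*$-homomorphism $\Lambda$ (restricted to the operator system $\SS_{\PP,\OO}$) with the state slice $\omega_{1,1}\tens\id$. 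Commuting the slices on disjoint legs and using $\psi=\Psi_{1,1}$ gives, for a spanning element,
\[
\lambda\bigl((\omega\tens\id)\bPhi_{\PP,\OO}(x)\bigr)=(\omega\tens\omega_{1,1}\tens\id)\Psi(x)=(\omega\tens\id)\psi(x),
\]
which shows at once that $\lambda$ takes values in $\sS_0\subseteq\sS$ and that $(\id\tens\lambda)\comp\bph_{\PP,\OO}=\psi$, settling existence.

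I expect the main obstacle to be the bookkeeping around the dilation: one must be sure that the state slice $\omega_{1,1}\tens\id$ undoes the Kasparov dilation exactly, so that $\lambda$ reproduces $\psi$, while at the same time guaranteeing that $\lambda$ lands in $\sS$ and not merely in the auxiliary algebra $\sB$ — which is precisely what the displayed identity secures — together with the preliminary reduction to a separable target that is needed to invoke Lemma \ref{aux_ksp} in the first place.
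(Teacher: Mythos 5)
Your proposal is correct and follows essentially the same route as the paper's proof: dilate $\psi$ via Lemma \ref{aux_ksp} after cutting down to the separable unital \cst-algebra generated by the slices of $\psi$, invoke the universal property of $(\MM_{\PP,\OO},\bPhi_{\PP,\OO})$ to get $\Lambda$, and define $\lambda$ by compressing with $\omega_{1,1}$, with uniqueness and the containment of the range in $\sS$ both read off from the slice identity $\lambda\bigl((\omega\tens\id)\bPhi_{\PP,\OO}(x)\bigr)=(\omega\tens\id)\psi(x)$. The only cosmetic difference is that you introduce the intermediate operator system $\sS_0$ and spell out why $\psi$ lands in $\C(\PP)\tens\sS_0$, which the paper leaves implicit.
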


\begin{proof}
Let $\sB$ be the \cst-algebra generated by the set
\[
\bigl\{(\phi\tens\id)\bigl(\psi(x)\bigr)\,\bigr|\bigl.\,x\in\C(\OO),\:\phi\in\C(\PP)^*\bigr\}.
\]
Then $\sB$ is separable unital and $\psi$ is a u.c.p.~map $\C(\OO)\to\C(\PP)\tens\sB$. By Lemma \ref{aux_ksp} $\psi=\Psi_{1,1}$ for some $\Psi\in\Mor(\C(\OO),\C(\PP)\tens\cK\tens\sB)$.
The universal property of $(\MM_{\PP,\OO},\bPhi_{\PP,\OO})$ provides $\Lambda\in\Mor(\C(\MM_{\PP,\OO}),\cK\tens\sB)$ such that $\Psi=(\id\tens\Lambda)\comp\bPhi_{\PP,\OO}$. Let $\widetilde{\lambda}=(\omega_{1,1}\tens\id)\comp\Lambda$. Then $\widetilde{\lambda}$ is a u.c.p.~map $\C(\MM_{\PP,\OO})\to\sB$ and defining $\lambda=\bigl.\widetilde{\lambda}\bigr|_{\SS_{\PP,\OO}}$ we obtain
\begin{equation}\label{philambda}
\psi=(\id\tens\lambda)\comp\bph_{\PP,\OO}.
\end{equation}
Slicing both sides of this equation with $\omega\in\C(\PP)^*$ we obtain $\lambda\bigl((\omega\tens\id)\bPhi_{\PP,\OO}(x)\bigr)=(\omega\tens\id)\bigl(\psi(x)\bigr)$ for all $x\in\C(\OO)$. This means, first of all, that the range of $\lambda$ is contained in $\sS$ and secondly, that formula \eqref{philambda} determines the values of $\lambda$ on elements of the form $(\omega\tens\id)\bPhi_{\PP,\OO}(x)$. Since such elements span $\SS_{\PP,\OO}$, this shows that $\lambda$ is unique.
\end{proof}

In what follows whenever $\sS$ is an operator system in a \cst-algebra $\sA$ the symbol $\cst\langle\sS\rangle$ will denote the \cst-subalgebra of $\sA$ generated by $\sS$. The enveloping \cst-algebra (or \cst-envelope) of $\sS$ in the sense of Hamana (\cite[Definition 2.5]{hamana}) will be denoted by $\cst_\env(\sS)$.

\begin{remark}\label{embSS}
Let $\PP,\OO,\SS_{\PP,\OO}$ and $\bph_{\PP,\OO}$ be as above. Then by construction we have
\begin{equation}\label{iota1}
\bPhi_{\PP,\OO}=(\id\tens\iota)\comp\bph_{\PP,\OO},
\end{equation}
where $\iota$ is the inclusion $\SS_{\PP,\OO}\hookrightarrow\C(\MM_{\PP,\OO})$.
\end{remark}

\begin{definition}\label{def:hypersep}
Let $\sA$ be a \cst-algebra and $\sS\subset\sA$ be an operator system. We say that 
\begin{enumerate}
\item the embedding $\sS\subset \sA$ is \emph{hyperrigid} (cf.~\cite[Theorem 2.1]{arvesonII}) if given a Hilbert space $\sH$, a $*$-homomorphism $\pi\colon\sA\to\B(\sH)$ and a u.c.p.~map $\eta\colon\sA\to\B(\sH)$ satisfying $\bigl.\pi\bigr|_{\sS}=\bigl.\eta\bigr|_{\sS}$ we have $\pi=\eta$,
\item\label{def:hypersep2} the embedding $\sS\subset\sA$ has a \emph{strong extension property} if for every \cst-algebra $\sB$ and a u.c.p.~map $\psi\colon\sS\to\sB$ there exists a u.c.p.~extension $\eta\colon\sA\to\sB$ of $\psi$.
\end{enumerate}
\end{definition}

\begin{theorem}\label{HE}
Let $\sS\subset\sA$ be hyperrigid and $\cst\langle\sS\rangle=\sA$. Then $\cst_\env(\sS)=\sA$.
\end{theorem}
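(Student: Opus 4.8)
The plan is to exploit the minimality of the \cst-envelope to produce a canonical surjection of $\sA$ onto $\cst_\env(\sS)$, and then to use hyperrigidity to show that this surjection is injective. Concretely, since $\sS\subset\sA$ is a unital operator system with $\cst\langle\sS\rangle=\sA$, the universal property of the \cst-envelope (\cite[Definition 2.5]{hamana}) furnishes a surjective $*$-homomorphism $q\colon\sA\to\cst_\env(\sS)$ whose restriction to $\sS$ is the canonical complete order embedding $\kappa\colon\sS\to\cst_\env(\sS)$; after identifying $\sS$ with $\kappa(\sS)$ we have $\bigl.q\bigr|_{\sS}=\kappa$. The entire theorem then reduces to proving $\ker{q}=0$, for then $q$ is an isomorphism.

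To access hyperrigidity I would first fix a faithful unital representation $\pi\colon\sA\to\B(\sH)$, which is the required $*$-homomorphism, and identify $\sA$ (hence $\sS$) with its image in $\B(\sH)$. The idea is then to manufacture a u.c.p.~map $\eta\colon\sA\to\B(\sH)$ that \emph{agrees with $\pi$ on $\sS$} yet \emph{factors through $q$} (so that it annihilates $\ker{q}$); hyperrigidity will force $\pi=\eta$ and thereby collapse $\ker{q}$ to $0$.

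The construction of $\eta$ is the heart of the matter. Since $\kappa$ is a complete order isomorphism of $\sS$ onto $\kappa(\sS)\subset\cst_\env(\sS)$, its inverse $\kappa^{-1}\colon\kappa(\sS)\to\sS\subset\B(\sH)$ is u.c.p., and by Arveson's extension theorem (\cite{paulsen}) it extends to a u.c.p.~map $s\colon\cst_\env(\sS)\to\B(\sH)$. Setting $\eta=s\comp{q}$, for $x\in\sS$ I compute $\eta(x)=s\bigl(q(x)\bigr)=s\bigl(\kappa(x)\bigr)=\kappa^{-1}\bigl(\kappa(x)\bigr)=x=\pi(x)$, so $\bigl.\eta\bigr|_{\sS}=\bigl.\pi\bigr|_{\sS}$. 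As $\pi$ is a $*$-homomorphism and $\eta$ is u.c.p., hyperrigidity of $\sS\subset\sA$ gives $\pi=\eta=s\comp{q}$. Consequently, if $q(x)=0$ then $\pi(x)=s(0)=0$, and faithfulness of $\pi$ yields $x=0$; hence $\ker{q}=0$ and $\cst_\env(\sS)=\sA$.

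The step I expect to be the main obstacle is precisely this construction of $\eta$: the key manoeuvre is to extend the \emph{inverse} $\kappa^{-1}$ of the envelope embedding by Arveson's theorem, which is exactly what makes $\eta$ restrict to the identity on $\sS$ while still factoring through the quotient $q$. Everything else—the existence and surjectivity of $q$, the choice of a faithful $\pi$, and the linearity of $s$—is routine, so the real content is recognizing that hyperrigidity is exactly the tool that upgrades a u.c.p.~section-on-$\sS$ into a genuine left inverse of $q$.
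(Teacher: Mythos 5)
Your proof is correct and follows essentially the same route as the paper: obtain the canonical surjection $q\colon\sA\to\cst_\env(\sS)$ restricting to the identity on $\sS$, use Arveson's extension theorem (injectivity of $\B(\sH)$) to extend the inverse of the envelope embedding to a u.c.p.~map $s$ on $\cst_\env(\sS)$, and invoke hyperrigidity to conclude $s\comp q$ equals the faithful representation, forcing $\ker q=0$. The only cosmetic difference is notation and the citation (the paper quotes \cite[Corollary 4.2]{hamana} for the universal property rather than the definition).
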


\begin{proof}
The universal property of $\cst_\env(\sS)$ (see \cite[Corollary 4.2]{hamana}) provides a surjective $*$-ho\-mo\-mor\-phism $\rho\colon\sA\to\cst_\env(\sS)$ such that $\bigl.\rho\bigr|_\sS$ is the identity on $\sS$. We will prove that $\rho$ is injective. Indeed, embed $\sA$ into $\B(\sH)$ and denote the embedding by $\iota\colon\sA\to\B(\sH)$. Let $\eta\colon\cst(\sS)\to\B(\sH)$ be the u.c.p.~extension of the embedding $\bigl.\iota\bigr|_{\sS}\colon\sS\to\B(\sH)$. Then $\eta\comp\rho\colon\sA\to\B(\sH)$ is a u.c.p.~map such that $\bigl.\eta\comp\rho\bigr|_{\sS}=\bigl.\iota\bigr|_{\sS}$. By hyperrigidity of $\sS$ in $\sA$ we get $\eta\comp\rho=\iota$ and hence $\rho$ is injective.
\end{proof}

\begin{remark}
Theorem \ref{HE} remains true even without the assumption that $\cst\langle\sS\rangle=\sA$: in \cite[Theorem~3.10]{HK2019} it was proven that if $\sS\subset\sA$ is hyperrgid then $\cst_\env(\sS)=\cst\langle\sS\rangle$. This can be deduced from Dritschel-McCullough theory of boundary representations (\cite{DM2005}). Nevertheless we presented a direct proof of Theorem~\ref{HE} under the assumptions we are interested in. 
\end{remark}

The hyperrigidity of $\SS_{\PP,\OO}\subset \C(\MM_{\PP,\OO})$ follows from the following more general fact.

\begin{theorem}\label{thm_hyper}
Suppose that $\sA$ and $\sB$ are unital \cst-algebras, $\sC$ is a \cst-algebra and let $\Phi\in\Mor(\sA,\sC\tens\sB)$. Define the operator system $\sS\subset\sB$
\[
\sS=\overline{\operatorname{span}}\bigl\{(\omega\tens\id)\bigl(\Phi(x)\bigr)\,\bigr|\bigl.\,\omega\in\sC^*,\:x\in\sA\bigr\}.
\]
If $\cst\langle\sS\rangle = \sB$ then $\sS$ is hyperrigid in $\sB$ and $\sB = \cst_\env(\sS)$. In particular for any finite quantum space $\PP$ and any compact quantum space $\OO$ the embedding $\SS_{\PP,\OO}\subset\C(\MM_{\PP,\OO})$ is hyperrigid and $\cst_\env(\sS)=\C(\MM_{\PP,\OO})$. 
\end{theorem}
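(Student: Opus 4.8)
The plan is to prove hyperrigidity through the multiplicative domain and then deduce the \cst-envelope statement from Theorem \ref{HE}. So I fix a Hilbert space $\sH$, a $*$-homomorphism $\pi\colon\sB\to\B(\sH)$ and a u.c.p.\ map $\eta\colon\sB\to\B(\sH)$ with $\eta|_\sS=\pi|_\sS$ as in Definition \ref{def:hypersep}; the goal is $\eta=\pi$. Since $\sS$ is an operator system it contains $\I_\sB$, so $\pi$ is automatically unital. The strategy is to show that $\sS$ is contained in the multiplicative domain of $\eta$. Granting this, the multiplicative domain is a \cst-subalgebra of $\sB$ containing $\sS$, hence it contains $\cst\langle\sS\rangle=\sB$; therefore $\eta$ is multiplicative, i.e.\ a unital $*$-homomorphism, and two $*$-homomorphisms $\eta$ and $\pi$ which agree on the generating operator system $\sS$ must coincide.

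To reach $\sS$ I would first lift everything through $\Phi$. Regard $\id\tens\eta$ and $\id\tens\pi$ as unital completely positive maps $\M(\sC\tens\sB)\to\M(\sC\tens\B(\sH))$, using the strict extension of amplified and slice maps to multiplier algebras recalled before Lemma \ref{mult_dom_arg_K}. The first claim is that
\[
(\id\tens\eta)\bigl(\Phi(x)\bigr)=(\id\tens\pi)\bigl(\Phi(x)\bigr),\qquad x\in\sA.
\]
Indeed, slicing the two sides by an arbitrary $\omega\in\sC^*$ and using that $(\omega\tens\id)\comp(\id\tens\eta)=\eta\comp(\omega\tens\id)$ (and likewise for $\pi$) turns the identity into $\eta\bigl((\omega\tens\id)\Phi(x)\bigr)=\pi\bigl((\omega\tens\id)\Phi(x)\bigr)$, which holds because $(\omega\tens\id)\Phi(x)\in\sS$; as the slice maps separate points of $\M(\sC\tens\B(\sH))$ the claim follows.

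The key step is then to show that each $\Phi(x)$ lies in the multiplicative domain of $\id\tens\eta$. Here I would use that $\Phi$ is a $*$-homomorphism, so $\Phi(x)^*\Phi(x)=\Phi(x^*x)$. Applying the displayed identity at $x^*x$, then multiplicativity of the $*$-homomorphism $\id\tens\pi$, and finally the displayed identity again, gives
\[
(\id\tens\eta)\bigl(\Phi(x)^*\Phi(x)\bigr)=(\id\tens\pi)\bigl(\Phi(x)\bigr)^*(\id\tens\pi)\bigl(\Phi(x)\bigr)=(\id\tens\eta)\bigl(\Phi(x)\bigr)^*(\id\tens\eta)\bigl(\Phi(x)\bigr),
\]
and symmetrically with $\Phi(x)\Phi(x)^*=\Phi(xx^*)$; hence $\Phi(x)$ is in the multiplicative domain of $\id\tens\eta$. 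Slicing this information down to $\sB$ — in the manner of Lemma \ref{mult_dom_arg_K}, by testing the bimodule property of the multiplicative domain against multipliers of the form $\I\tens z$ and applying $\omega\tens\id$ — shows that each $(\omega\tens\id)\Phi(x)$ lies in the multiplicative domain of $\eta$. These elements span a dense subspace of $\sS$, and the multiplicative domain is closed, so $\sS$ is contained in the multiplicative domain of $\eta$, completing the argument for hyperrigidity. Finally $\sB=\cst_\env(\sS)$ is immediate from Theorem \ref{HE} since $\cst\langle\sS\rangle=\sB$, and the statement about $\SS_{\PP,\OO}\subset\C(\MM_{\PP,\OO})$ follows by taking $\sA=\C(\OO)$, $\sC=\C(\PP)$, $\sB=\C(\MM_{\PP,\OO})$ and $\Phi=\bPhi_{\PP,\OO}$, the hypothesis $\cst\langle\SS_{\PP,\OO}\rangle=\C(\MM_{\PP,\OO})$ being guaranteed by Theorem \ref{thm:exMPO}.

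The main obstacle I anticipate is this last slicing step together with the multiplier-algebra bookkeeping: one must make sure that $\id\tens\eta$ is genuinely defined on $\M(\sC\tens\sB)$, that the multiplicative-domain bimodule identity persists there, and that it survives application of $\omega\tens\id$ — all the more delicate since $\sC$ need not be unital (in the application $\sC=\C(\PP)$ is finite-dimensional, so these technicalities evaporate and the tensor products involve no multipliers). The conceptual crux, by contrast, is the clean observation that although $\eta$ is merely u.c.p., the fully coherent elements $\Phi(x)$ are forced into its multiplicative domain, and that this is inherited by their slices, which are exactly what generate $\sB$.
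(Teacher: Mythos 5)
Your proof is correct and follows essentially the same route as the paper's: reduce hyperrigidity to showing $\sS$ lies in the multiplicative domain of $\eta$, deduce $(\id\tens\eta)\comp\Phi=(\id\tens\pi)\comp\Phi$ from agreement on $\sS$, verify the Choi equality criterion to place each $\Phi(x)$ in the multiplicative domain of $\id\tens\eta$, and slice down via Lemma \ref{mult_dom_arg_K}. The only (cosmetic) difference is that the paper first rewrites $\sS$ using slices of $\Phi(u)$ for unitaries $u\in\cU(\sA)$ and checks the criterion only for those, whereas you apply it directly to arbitrary $x\in\sA$ — both work.
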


\begin{proof}
Denoting by $\cU(\sA)$ the set of unitary elements in $\sA$ we easily observe that
\[
\sS=\overline{\operatorname{span}}\bigl\{(\omega\tens\id)\bigl(\Phi(x)\bigr)\,\bigr|\bigl.\,\omega\in\sC^*,\:x\in\cU(\sA)\bigr\}.
\]
Let $\eta\colon\sB\to\B(\sH)$ be a u.c.p.~map, and $\pi\colon\sB\to\B(\sH)$ be a $*$-homomorphism satisfying
\begin{equation}\label{pieta_eq}
\bigl.\eta\bigr|_{\sS}=\bigl.\pi\bigr|_{\sS}.
\end{equation}
In order to prove that $\eta=\pi$ it is enough to prove that $\sS$ is in the multiplicative domain of $\eta$. 

Let $\sL$ be a Hilbert space such that $\sC\subset\B(\sL)$ and let us consider the u.c.p.~map $(\id\tens\eta)\colon\M(\cK(\sL)\tens\sB)\to\M(\cK(\sL)\tens\cK(\sH))$. Equation \eqref{pieta_eq} is equivalent to
\[
(\id\tens\eta)\comp\Phi=(\id\tens\pi)\comp\Phi.
\]
Furthermore, if $u\in\cU(\sA)$ then 
\begin{align*}
\Bigl((\id\tens\eta)\bigl(\Phi(u)\bigr)\Bigr)^*\Bigl((\id\tens\eta)\bigl(\Phi(u)\bigr)\Bigr)
&=(\id\tens\pi)\bigl(\Phi(u^*)\bigr)(\id\tens\pi)\bigl(\Phi(u)\bigr)\\
&=(\id\tens\pi)\bigl(\Phi(u^*u)\bigr)=\I\\
&=(\id\tens\eta)\bigl(\Phi(\I)\bigr)=(\id\tens\eta)\bigl(\Phi(u^*u)\bigr)
\end{align*}
and similarly 
\[
\Bigl((\id\tens\eta)\bigl(\Phi(u)\bigr)\Bigr)\Bigl((\id\tens\eta)\bigl(\Phi(u)\bigr)\Bigr)^*=(\id\tens\eta)\bigl(\Phi(uu^*)\bigr).
\] 
This shows that $\Phi(u)$ is in the multiplicative domain of $(\id\tens\eta)$. By Lemma \ref{mult_dom_arg_K} we conclude that $\sS$ is in the multiplicative domain of $\eta$ which concludes the proof.
\end{proof}

\begin{theorem}\label{strongArveson}
Let $\PP$ be a finite quantum space and let $\OO$ be a compact quantum space such that $\C(\OO)$ is separable. The embedding $\SS_{\PP,\OO}\subset\C(\MM_{\PP,\OO})$ has the strong extension property. 
\end{theorem}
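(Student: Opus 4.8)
The plan is to reduce the strong extension property to the universal property of $\SS_{\PP,\OO}$ from Theorem \ref{thm_op_syst}, exploiting the fact that the u.c.p.~map produced there is manufactured as the restriction to $\SS_{\PP,\OO}$ of a u.c.p.~map already defined on all of $\C(\MM_{\PP,\OO})$. So let $\psi\colon\SS_{\PP,\OO}\to\sB$ be u.c.p.\ (whence $\sB$ is unital), and first turn it into a u.c.p.~map out of $\C(\OO)$ by setting
\[
\chi=(\id_{\C(\PP)}\tens\psi)\comp\bph_{\PP,\OO}\colon\C(\OO)\longrightarrow\C(\PP)\tens\sB.
\]
This is u.c.p.: $\bph_{\PP,\OO}$ is u.c.p.~by construction, and since $\C(\PP)=\bigoplus_i\Mat_{n_i}(\CC)$ is finite-dimensional the amplification $\id_{\C(\PP)}\tens\psi$ is a u.c.p.~map $\C(\PP)\tens\SS_{\PP,\OO}\to\C(\PP)\tens\sB$ (it is the restriction to the operator subsystem $\C(\PP)\tens\SS_{\PP,\OO}$ of the matrix amplification of $\psi$).

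Next I would apply Theorem \ref{thm_op_syst} with $\sS=\sB$ and input map $\chi$, obtaining a \emph{unique} u.c.p.~$\lambda\colon\SS_{\PP,\OO}\to\sB$ with $\chi=(\id\tens\lambda)\comp\bph_{\PP,\OO}$. By the very definition of $\chi$ the given map $\psi$ satisfies $\chi=(\id\tens\psi)\comp\bph_{\PP,\OO}$, so the uniqueness clause forces $\lambda=\psi$. The crucial observation is that in the proof of Theorem \ref{thm_op_syst} the map $\lambda$ is produced as $\widetilde{\lambda}\bigl|_{\SS_{\PP,\OO}}$ with $\widetilde{\lambda}=(\omega_{1,1}\tens\id)\comp\Lambda$ a u.c.p.~map defined on the whole of $\C(\MM_{\PP,\OO})$, where $\Lambda$ comes from Kasparov's dilation (Lemma \ref{aux_ksp}) and the universal property of $\MM_{\PP,\OO}$. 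Composing $\widetilde{\lambda}$ with the inclusion of its (separable) range into $\sB$ gives a u.c.p.~map $\eta\colon\C(\MM_{\PP,\OO})\to\sB$ with $\eta\bigl|_{\SS_{\PP,\OO}}=\lambda=\psi$, i.e.~the required extension. The separability of $\C(\OO)$ assumed in the statement is exactly what makes Lemma \ref{aux_ksp} applicable inside that argument, the relevant auxiliary \cst-algebra being automatically separable.

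To keep things self-contained one may instead re-run the construction directly: write $\chi=\Psi_{1,1}$ for some $\Psi\in\Mor(\C(\OO),\C(\PP)\tens\cK\tens\sB)$ via Lemma \ref{aux_ksp}, use the universal property of $(\MM_{\PP,\OO},\bPhi_{\PP,\OO})$ to get $\Lambda\in\Mor(\C(\MM_{\PP,\OO}),\cK\tens\sB)$ with $\Psi=(\id\tens\Lambda)\comp\bPhi_{\PP,\OO}$, and set $\eta=(\omega_{1,1}\tens\id)\comp\Lambda$. A short computation gives $(\id\tens\eta)\comp\bPhi_{\PP,\OO}=\chi$, and substituting $\bPhi_{\PP,\OO}=(\id\tens\iota)\comp\bph_{\PP,\OO}$ (Remark \ref{embSS}) and slicing by functionals $\omega\in\C(\PP)^*$ shows that $\eta$ and $\psi$ agree on the spanning set $\bigl\{(\omega\tens\id)\bph_{\PP,\OO}(x)\bigr\}$ of $\SS_{\PP,\OO}$, hence $\eta\bigl|_{\SS_{\PP,\OO}}=\psi$.

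The step I expect to require the most care is verifying that the extension restricts back to the \emph{given} $\psi$ and not to some other u.c.p.~map sharing the same amplified intertwiner with $\chi$; this is precisely where the uniqueness in Theorem \ref{thm_op_syst} — equivalently, the density of the slices $(\omega\tens\id)\bph_{\PP,\OO}(x)$ in $\SS_{\PP,\OO}$ — carries the argument. The only other point needing attention is the (routine) confirmation that $\id_{\C(\PP)}\tens\psi$ is genuinely u.c.p., which rests on the nuclearity of the finite-dimensional algebra $\C(\PP)$.
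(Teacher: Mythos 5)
Your proposal is correct and, in its ``self-contained'' form, is essentially identical to the paper's proof: apply Lemma \ref{aux_ksp} to $(\id\tens\psi)\comp\bph_{\PP,\OO}$, invoke the universal property of $(\MM_{\PP,\OO},\bPhi_{\PP,\OO})$ to obtain $\Lambda$, and take $(\omega_{1,1}\tens\id)\comp\Lambda$ as the extension. The first route via the uniqueness clause of Theorem \ref{thm_op_syst} is only a cosmetic repackaging of the same argument, and your remark about restricting to the separable \cst-algebra generated by the range is a sensible (and slightly more careful) way to justify the applicability of Kasparov's dilation.
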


\begin{proof}
Let $\psi\colon\SS_{\PP,\OO}\to\sA$ be a u.c.p.~map. Applying Lemma \ref{aux_ksp} to the composition
\[
(\id\tens\psi)\comp\bph_{\PP,\OO}\colon\C(\OO)\longrightarrow\C(\PP)\tens\sA
\]
we find a $*$-homomorphism $\Psi\colon\C(\OO)\to\C(\PP)\tens\M(\cK\tens\sA)$ such that $\Psi_{1,1}=(\id\tens\psi)\comp\bph_{\PP,\OO}$. By the universal property of $\C(\MM_{\PP,\OO})$ there exists a $*$-homomorphism $\Lambda\colon\C(\MM_{\PP,\OO})\to\M(\cK\tens\sA)$ such that
\[
\Psi=(\id\tens\Lambda)\comp\bPhi_{\PP,\OO}.
\]
The proof is completed by noting that $(\omega_{1,1}\tens\id)\comp\Psi$ is a u.c.p.~map with range in $\sA$ which extends $\psi$.
\end{proof}

Let us recall from \cite[Theorem 6.3]{KPTT} that the \emph{commuting tensor product} of operator systems (defined in \cite[Section 6]{KPTT}) is \emph{functorial} (\cite[Section 3]{KPTT}) which implies that for any operator system $\sS$ there is a canonical u.c.p.~map $\SS_{\PP,\OO}\tens_\cc\sS\to\C(\MM_{\PP,\OO})\tens_\cc\sS$ extending the canonical inclusion on the algebraic tensor products. Furthermore, by \cite[Theorem 6.7]{KPTT} we have $\C(\MM_{\PP,\OO})\tens_\cc\sS=\C(\MM_{\PP,\OO})\tens_\mmax\sS$ (cf.~also \cite[Section 5]{KPTT}). Thus there is a canonical u.c.p.~map
\[
\SS_{\PP,\OO}\tens_\cc\sS\longrightarrow\C(\MM_{\PP,\OO})\tens_\mmax\sS.
\]

\begin{lemma}\label{orderEmb}
Let $\PP$ be a finite quantum space, $\OO$ a compact quantum space such that $\C(\OO)$ is separable and let $\sS$ be an arbitrary operator system. Then the canonical map $\SS_{\PP,\OO}\tens_\cc\sS\to\C(\MM_{\PP,\OO})\tens_\mmax\sS$ is a complete order embedding. 
\end{lemma}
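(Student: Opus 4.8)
The plan is to show that the canonical u.c.p.~map $\Theta\colon\SS_{\PP,\OO}\tens_\cc\sS\to\C(\MM_{\PP,\OO})\tens_\mmax\sS$ reflects positivity at every matrix level; since $\Theta$ is u.c.p.~(by functoriality of $\tens_\cc$ together with the identification $\C(\MM_{\PP,\OO})\tens_\cc\sS=\C(\MM_{\PP,\OO})\tens_\mmax\sS$ from \cite[Theorem~6.7]{KPTT}), this is exactly what is needed for a complete order embedding. Writing $\iota\colon\SS_{\PP,\OO}\hookrightarrow\C(\MM_{\PP,\OO})$ for the inclusion, I would fix $n$ and a self-adjoint $X\in M_n(\SS_{\PP,\OO}\tens\sS)$ whose image $\Theta^{(n)}(X)$ is positive, and aim to conclude that $X\in M_n(\SS_{\PP,\OO}\tens_\cc\sS)^+$. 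For this I recall the defining characterization of the commuting tensor product (\cite[Section~6]{KPTT}): $X$ is positive precisely when $(\phi\cdot\psi)^{(n)}(X)\geq0$ for every Hilbert space $\sH$ and every pair of u.c.p.~maps $\phi\colon\SS_{\PP,\OO}\to\B(\sH)$ and $\psi\colon\sS\to\B(\sH)$ with commuting ranges, where $\phi\cdot\psi$ is the map sending $s\tens t$ to $\phi(s)\psi(t)$.

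The heart of the argument is to promote each such commuting pair to a commuting pair defined on all of $\C(\MM_{\PP,\OO})$. Given $\phi$ and $\psi$ as above, the commutation of their ranges means that $\phi$ takes values in the relative commutant $\psi(\sS)'\subset\B(\sH)$; since $\sS$ is self-adjoint, $\psi(\sS)$ is a self-adjoint set and $\psi(\sS)'$ is a von Neumann algebra, in particular a unital \cst-algebra. Regarding $\phi$ as a u.c.p.~map $\SS_{\PP,\OO}\to\psi(\sS)'$, I would invoke the strong extension property of $\SS_{\PP,\OO}\subset\C(\MM_{\PP,\OO})$ (Theorem~\ref{strongArveson}, which applies since $\C(\OO)$ is separable) to obtain a u.c.p.~extension $\Phi\colon\C(\MM_{\PP,\OO})\to\psi(\sS)'$ of $\phi$. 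Because the range of $\Phi$ lies in $\psi(\sS)'$, the pair $(\Phi,\psi)$ again has commuting ranges.

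With this in hand the conclusion is quick. The element $\Theta^{(n)}(X)$ is assumed positive in $M_n(\C(\MM_{\PP,\OO})\tens_\mmax\sS)=M_n(\C(\MM_{\PP,\OO})\tens_\cc\sS)$, so testing it against the commuting pair $(\Phi,\psi)$ gives $(\Phi\cdot\psi)^{(n)}\bigl(\Theta^{(n)}(X)\bigr)\geq0$. But $\Phi$ restricts to $\phi$ on $\SS_{\PP,\OO}$, so on elementary tensors $\Phi(\iota(s))\psi(t)=\phi(s)\psi(t)$, whence $(\Phi\cdot\psi)^{(n)}\bigl(\Theta^{(n)}(X)\bigr)=(\phi\cdot\psi)^{(n)}(X)$; thus $(\phi\cdot\psi)^{(n)}(X)\geq0$. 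As $(\phi,\psi)$ was an arbitrary commuting pair on $\SS_{\PP,\OO}$ and $\sS$, the characterization above yields $X\in M_n(\SS_{\PP,\OO}\tens_\cc\sS)^+$. Running this for every $n$ shows that $\Theta$ reflects positivity completely, hence is a complete order embedding (injectivity being automatic from reflecting positivity on self-adjoint elements, since $\Theta(a)=0$ forces $a\geq0$ and $-a\geq0$ for the self-adjoint and anti-self-adjoint parts).

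The one genuinely delicate point is the preservation of commutation under extension: Arveson's extension theorem alone would extend $\phi$ to a u.c.p.~map into $\B(\sH)$ with no control over whether it still commutes with $\psi(\sS)$. The strong extension property is precisely what removes this obstacle, since it lets us extend $\phi$ \emph{inside} the \cst-algebra $\psi(\sS)'$; this is the step where the special nature of the inclusion $\SS_{\PP,\OO}\subset\C(\MM_{\PP,\OO})$ (rather than a generic operator system inclusion) is essential, and everything else is a routine unwinding of the definition of $\tens_\cc$.
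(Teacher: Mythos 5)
Your argument is correct and follows essentially the same route as the paper's proof: identify $\C(\MM_{\PP,\OO})\tens_\mmax\sS$ with the commuting tensor product, test positivity against commuting pairs $(\phi,\psi)$, and use the strong extension property (Theorem \ref{strongArveson}) to extend $\phi$ to $\C(\MM_{\PP,\OO})$ without destroying commutation with $\psi$. The only cosmetic difference is that you extend $\phi$ into the commutant $\psi(\sS)'$ whereas the paper extends it into the \cst-algebra generated by the range of $\phi$; both choices serve the identical purpose.
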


\begin{proof} We employ a modification of the techniques used in the proofs of \cite[Proposition~4.6]{Harris2016} and \cite[Lemma 1.11]{BGH}. Since $\C(\MM_{\PP,\OO})$ is a unital \cst-algebra, by \cite[Theorem 6.6]{KPTT} we can replace the maximal tensor product $\C(\MM_{\PP,\OO})\tens_\mmax\sS$ be the commuting one. Since the embedding $\sS_{\PP,\OO}\hookrightarrow\C(\MM_{\PP,\OO})$ is u.c.p., by the functoriality of the commuting tensor product, \cite[Theorem 6.3]{KPTT}, the map $\SS_{\PP,\OO}\tens_\cc\sS\to\C(\MM_{\PP,\OO})\tens_\cc\sS$ is also u.c.p.

Let $m$ be a natural number and take $y=y^*\in\Mat_m(\SS_{\PP,\OO}\tens_\cc\sS)$ which is positive when viewed as an element of $\Mat_m(\C(\MM_{\PP,\OO})\tens_\cc\sS)$. We have to show that $y$ is positive when viewed as an element of $\Mat_m(\SS_{\PP,\OO}\tens_\cc\sS)$, i.e.~given u.c.p.~maps $\ph_1\colon\SS_{\PP,\OO}\to\B(\sH)$ and $\ph_2\colon\sS\to\B(\sH)$ with commuting ranges we have to prove the positivity of $\bigl(\id\tens(\ph_1\cdot\ph_2)\bigr)(y)$ (cf.~\cite[Section 6]{KPTT}, the definition of $\ph_1\cdot\ph_2$ is on page 289, see also the proof of Proposition \ref{PropReali}).

Denoting by $\sA_{\ph_1}$ the \cst-algebra generated by the range of $\ph_1$ and using Theorem \ref{strongArveson} we find a u.c.p.~extension $\widetilde{\ph}_1\colon\C(\MM_{\PP,\OO})\to\sA_{\ph_1}$ of $\ph_1$. In particular $\widetilde{\ph}_1$ and $\ph_2$ have commuting ranges. This shows that $\widetilde{\ph}_1\cdot\ph_2\colon\C(\MM_{\PP,\OO})\tens_\cc\sS\to\B(\sH)$ is a u.c.p.~map that extends $\ph_1\cdot\ph_2\colon\SS_{\PP,\OO}\tens_\cc\sS\to\B(\sH)$ and we conclude that $\bigl(\id\tens(\ph_1\cdot\ph_2)\bigr)(y)=\bigl(\id\tens(\widetilde{\ph}_1\cdot\ph_2)\bigr)(y)$. Clearly the latter element is positive. 
\end{proof}

\begin{corollary}\label{corOrderEmb}
With $\PP$ and $\OO$ as above the canonical map $\SS_{\PP,\OO}\tens_\cc\SS_{\PP,\OO}\to\C(\MM_{\PP,\OO})\tens_\mmax\C(\MM_{\PP,\OO})$ is a complete order embedding. 
\end{corollary}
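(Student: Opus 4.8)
The statement to prove is Corollary \ref{corOrderEmb}: with $\PP$ finite and $\OO$ compact with $\C(\OO)$ separable, the canonical map $\SS_{\PP,\OO}\tens_\cc\SS_{\PP,\OO}\to\C(\MM_{\PP,\OO})\tens_\mmax\C(\MM_{\PP,\OO})$ is a complete order embedding.

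\bigskip

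The plan is to apply Lemma \ref{orderEmb} twice, factoring the map through an intermediate operator system where one tensor factor has already been enlarged from $\SS_{\PP,\OO}$ to $\C(\MM_{\PP,\OO})$. The key observation is that Lemma \ref{orderEmb} is stated for the map $\SS_{\PP,\OO}\tens_\cc\sS\to\C(\MM_{\PP,\OO})\tens_\mmax\sS$ with $\sS$ an \emph{arbitrary} operator system, so we are free to specialize the abstract factor $\sS$ to various choices. First I would apply the lemma with $\sS=\SS_{\PP,\OO}$ to conclude that the canonical map
\[
\SS_{\PP,\OO}\tens_\cc\SS_{\PP,\OO}\longrightarrow\C(\MM_{\PP,\OO})\tens_\mmax\SS_{\PP,\OO}
\]
is a complete order embedding.

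\bigskip

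Next I would promote the second factor. Here one must exercise care about which tensor product appears: since $\C(\MM_{\PP,\OO})$ is a unital \cst-algebra, by \cite[Theorem 6.6]{KPTT} we have $\C(\MM_{\PP,\OO})\tens_\mmax\SS_{\PP,\OO}=\C(\MM_{\PP,\OO})\tens_\cc\SS_{\PP,\OO}$, and commuting tensor products are symmetric, so this equals $\SS_{\PP,\OO}\tens_\cc\C(\MM_{\PP,\OO})$. Now applying Lemma \ref{orderEmb} a second time, with the roles of the two operator-system slots swapped and with the arbitrary factor taken to be $\sS=\C(\MM_{\PP,\OO})$ (which is itself an operator system), yields that
\[
\SS_{\PP,\OO}\tens_\cc\C(\MM_{\PP,\OO})\longrightarrow\C(\MM_{\PP,\OO})\tens_\mmax\C(\MM_{\PP,\OO})
\]
is a complete order embedding. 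Composing the two complete order embeddings gives the desired map, since a composition of complete order embeddings is again one and the composite is easily checked to be the canonical map $\SS_{\PP,\OO}\tens_\cc\SS_{\PP,\OO}\to\C(\MM_{\PP,\OO})\tens_\mmax\C(\MM_{\PP,\OO})$ on the algebraic tensor product.

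\bigskip

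The main obstacle is the bookkeeping of tensor-product identifications and symmetry rather than any genuine analytic difficulty. One must verify that the commuting tensor product is symmetric (so that applying Lemma \ref{orderEmb} to the second slot is legitimate), that the equalities $\tens_\mmax=\tens_\cc$ hold at each stage via \cite[Theorem 6.6]{KPTT} because the enlarged factor is a unital \cst-algebra, and crucially that the two intermediate complete order embeddings compose to \emph{the} canonical map named in the statement. The last point is the one place requiring attention: the canonical map on the algebraic tensor product $\SS_{\PP,\OO}\odot\SS_{\PP,\OO}$ is determined by its values on elementary tensors, and both the composite I construct and the stated canonical map send $s\otimes t\mapsto s\otimes t$ under the inclusions $\SS_{\PP,\OO}\hookrightarrow\C(\MM_{\PP,\OO})$, so they agree and hence agree after continuous extension. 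I expect this verification to be routine once the symmetry and the $\tens_\mmax=\tens_\cc$ identifications are recorded.
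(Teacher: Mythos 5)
Your proposal is correct and follows essentially the same route as the paper: one application of Lemma \ref{orderEmb} with $\sS=\SS_{\PP,\OO}$, the identification $\C(\MM_{\PP,\OO})\tens_\mmax\SS_{\PP,\OO}=\C(\MM_{\PP,\OO})\tens_\cc\SS_{\PP,\OO}$ for the unital \cst-algebra factor, symmetry of $\tens_\cc$ and $\tens_\mmax$, and a second application of the lemma, followed by composing the two complete order embeddings. The paper's proof is exactly this two-step argument, so no changes are needed.
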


\begin{proof}
First, by Lemma \ref{orderEmb} applied to $\sS=\SS_{\PP,\OO}$ we obtain a complete order embedding
\[
\SS_{\PP,\OO}\tens_\cc\SS_{\PP,\OO}\longrightarrow\C(\MM_{\PP,\OO})\tens_\mmax\SS_{\PP,\OO}=\C(\MM_{\PP,\OO})\tens_\cc\SS_{\PP,\OO}
\]
(cf.~the remarks preceding Lemma \ref{orderEmb}). Then, using the fact that the tensor products $\tens_\cc$ and $\tens_\mmax$ are symmetric (\cite[Theorems 5.5 and 6.3]{KPTT}) we apply the Lemma again to obtain a complete order embedding $\C(\MM_{\PP,\OO})\tens_\cc\SS_{\PP,\OO}\to\C(\MM_{\PP,\OO})\tens_\mmax\C(\MM_{\PP,\OO})$.
\end{proof}

\section{Lifting property of \texorpdfstring{$\C(\MM_{\PP,\OO})$}{CMPO} and \texorpdfstring{$\SS_{\PP,\OO}$}{SPO}}\label{sect:LP}

Let $\sA$ be a \cst-algebra. Following \cite[Definition 13.1.1]{BrownOzawa} we say that $\sA$ has the \emph{lifting property} if given any \cst-algebra $\sB$ with an ideal $\sJ\subset\sB$, any contractive completely positive $\ph\colon\sA\to\sB/\sJ$ admits a contractive completely positive lift $\widetilde{\ph}\colon\sA\to\sB$. For unital algebras the lifting property is equivalent to the analogous property with ``contractive completely positive'' replaced by ``unital completely positive'' (\cite[Lemma 13.1.2]{BrownOzawa}).

\begin{theorem}\label{LPprop}
Let $\PP$ be a finite quantum space and let $\OO$ be a compact quantum space such that $\C(\OO)$ is separable and has the lifting property. Then the \cst-algebra $\C(\MM_{\PP,\OO})$ has the lifting property.
\end{theorem}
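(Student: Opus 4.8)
The plan is to reduce everything to the structural description of $\C(\MM_{\PP,\OO})$ obtained earlier and then feed it into two permanence properties of the lifting property. By Corollary \ref{cor:DCMPO} we may write $\C(\PP)=\bigoplus_{i=1}^m\Mat_{n_i}(\CC)$ and
\[
\C(\MM_{\PP,\OO})=\sD_1\fp\dotsm\fp\sD_m,
\]
where $\sD_i$ is the relative commutant of $\Mat_{n_i}(\CC)$ in $\Mat_{n_i}(\CC)\fp\C(\OO)$; moreover, by Proposition \ref{prop:sD} (via Lemma \ref{lem:Pisier}),
\[
\Mat_{n_i}(\CC)\fp\C(\OO)\cong\Mat_{n_i}(\CC)\tens\sD_i.
\]
All algebras occurring here are separable because $\C(\OO)$ is. Thus it suffices to first show that each $\sD_i$ has the lifting property and then recombine the $\sD_i$ through the free product.

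The two inputs I would use are the following: the finite-dimensional algebra $\Mat_{n_i}(\CC)$ is nuclear, hence has the lifting property; and, crucially, the full free product (amalgamated over the units) of separable unital \cst-algebras with the lifting property again has the lifting property. Granting these, $\Mat_{n_i}(\CC)\fp\C(\OO)$ has the lifting property for each $i$, and through the identification above so does $\Mat_{n_i}(\CC)\tens\sD_i$.

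Next I would pass from $\Mat_{n_i}(\CC)\tens\sD_i$ down to $\sD_i$ by a slicing argument. Given a unital \cst-algebra $\sB$ with an ideal $\sJ$ and a u.c.p.\ map $\ph\colon\sD_i\to\sB/\sJ$, amplify it to $\id\tens\ph\colon\Mat_{n_i}(\CC)\tens\sD_i\to\Mat_{n_i}(\CC)\tens(\sB/\sJ)$; since $\Mat_{n_i}(\CC)$ is finite-dimensional this target is canonically $(\Mat_{n_i}(\CC)\tens\sB)/(\Mat_{n_i}(\CC)\tens\sJ)$. The lifting property of $\Mat_{n_i}(\CC)\tens\sD_i$ then yields a u.c.p.\ lift $\Theta\colon\Mat_{n_i}(\CC)\tens\sD_i\to\Mat_{n_i}(\CC)\tens\sB$. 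Composing the unital embedding $d\mapsto\I\tens d$ with $\Theta$ and then with the conditional expectation $\tau\tens\id$ (where $\tau$ is the normalized trace on $\Mat_{n_i}(\CC)$) produces a u.c.p.\ map $\sD_i\to\sB$ which a short diagram chase shows to be a lift of $\ph$. Hence each $\sD_i$ has the lifting property, and applying the free-product permanence once more to $\sD_1\fp\dotsm\fp\sD_m$ gives the lifting property for $\C(\MM_{\PP,\OO})$.

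The main obstacle is the free-product permanence of the lifting property: it is the one genuinely nontrivial ingredient, which I would invoke from the literature rather than reprove. The remaining points are routine: the identification of the quotient $\Mat_{n_i}(\CC)\tens(\sB/\sJ)$ uses only exactness of the finite-dimensional factor, and the slicing step is a formal computation with the trace conditional expectation. I note in passing that the operator-system machinery of Section \ref{sect:SPO} (Theorem \ref{thm_op_syst} together with the strong extension property, Theorem \ref{strongArveson}) does \emph{not} seem to suffice for $\C(\MM_{\PP,\OO})$ itself: it lifts only the restriction of a u.c.p.\ map to $\SS_{\PP,\OO}$, and since neither the candidate lift nor the given map need be a $*$-homomorphism, hyperrigidity cannot be used to upgrade agreement on $\SS_{\PP,\OO}$ to agreement on all of $\C(\MM_{\PP,\OO})$; this is why I route the argument through the free-product decomposition instead.
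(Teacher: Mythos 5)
Your argument is correct, but it takes a genuinely different route from the one in the paper. You reduce everything to the structural description of Corollary \ref{cor:DCMPO} and then rely on two permanence properties: that the lifting property passes from $\Mat_{n_i}(\CC)\tens\sD_i$ down to $\sD_i$ (your slicing with $\tau\tens\id$ is fine, as is the identification of $\Mat_{n_i}(\CC)\tens(\sB/\sJ)$ with the corresponding quotient), and --- the real crux --- that the unital full free product of separable unital \cst-algebras with the lifting property again has the lifting property. The latter is indeed available: it follows from Boca's free product of u.c.p.~maps combined with Kasparov's dilation theorem (dilate the given u.c.p.~map to a $*$-homomorphism into $\M(\cK\tens(\sB/\sJ))$, lift its restriction to each free factor, and recombine the lifts via Boca). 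But note that this hidden proof is of essentially the same nature and difficulty as the paper's direct argument, so you have repackaged the work rather than avoided it. The paper instead proceeds in two steps: for a unital $*$-homomorphism $\rho\colon\C(\MM_{\PP,\OO})\to\sB/\sJ$ it lifts $(\id\tens\rho)\comp\bPhi_{\PP,\OO}$ using the lifting property of $\C(\OO)$, dilates the lift via Lemma \ref{aux_ksp}, feeds the result through the universal property of $\MM_{\PP,\OO}$, and then uses hyperrigidity of $\SS_{\PP,\OO}$ (Theorem \ref{thm_hyper}) to promote agreement on $\SS_{\PP,\OO}$ to a genuine lift of $\rho$; a second application of Kasparov's theorem reduces arbitrary u.c.p.~maps to the case of $*$-homomorphisms. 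This also shows that your closing remark is too pessimistic: hyperrigidity indeed cannot compare two u.c.p.~maps, but it can compare a u.c.p.~map with a $*$-homomorphism, and the two-step reduction ($*$-homomorphisms first, then general u.c.p.~maps via dilation) is precisely how the operator-system machinery is made to suffice. What your approach buys is a clean modular statement (lifting property for each block, then stability under free products); what the paper's approach buys is self-containedness and a further illustration of the role played by $\SS_{\PP,\OO}$ and its hyperrigidity.
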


The famous Choi-Effros lifting theorem (\cite[Theorem C.3]{BrownOzawa}) states that any separable nuclear \cst-algebra has the lifting property. We will see in Remark \ref{rem:nonnuc} that $\C(\MM_{\PP,\OO})$ is almost never nuclear. We need the assumption of separability of $\C(\OO)$ to be able to use Kasparov's dilation theorem.

\begin{proof}[Proof of Theorem \ref{LPprop}]
Let $\sB$ be a unital \cst-algebra with an ideal $\sJ\subset\sB$ and let $\rho\colon\C(\MM_{\PP,\OO})\to\sB/\sJ$ be a unital $*$-homomorphism. Consider the homomorphism
\[
\Psi=(\id\tens\rho)\comp\bPhi_{\PP,\OO}\colon\C(\OO)\longrightarrow\C(\PP)\tens(\sB/\sJ)=\bigl(\C(\PP)\tens\sB\bigr)/\bigl(\C(\PP)\tens\sJ\bigr).
\]
By assumption $\Psi$ admits a u.c.p.~lift $\widetilde{\Psi}\colon\C(\OO)\to\C(\PP)\tens\sB$. Now we use Lemma \ref{aux_ksp} to write $\widetilde{\Psi}$ in the form $\widetilde{\Psi}=\Theta_{1,1}$ for a $\Theta\in\Mor(\C(\OO),\C(\PP)\tens\cK\tens\sB)$.

By the universal property of $(\MM_{\PP,\OO},\bPhi_{\PP,\OO})$ there exists a unique $\Lambda\in\Mor(\C(\MM_{\PP,\OO}),\cK\tens\sB)$ such that $\Theta=(\id\tens\Lambda)\comp\bPhi_{\PP,\OO}$. Now if $q$ denotes the quotient map $\sB\to\sB/\sJ$ then
\begin{align*}
(\id\tens\rho)\comp\bPhi_{\PP,\OO}=\Psi&=(\id\tens{q})\comp\widetilde{\Psi}\\
&=(\id\tens{q})\comp(\id\tens\omega_{1,1}\tens\id)\comp\Theta\\
&=(\id\tens{q})\comp(\id\tens\omega_{1,1}\tens\id)\comp(\id\tens\Lambda)\comp\bPhi_{\PP,\OO},
\end{align*}
so for any $x\in\C(\OO)$ and $\phi\in\C(\PP)^*$
\[
\rho\bigl((\phi\tens\id)\bPhi_{\PP,\OO}(x)\bigr)=\Bigl(q\comp\bigl((\omega_{1,1}\tens\id)\comp\Lambda\bigr)\Bigr)\bigl((\phi\tens\id)\bPhi_{\PP,\OO}(x)\bigr)
\]
In other words for any $y\in\SS_{\PP,\OO}$ we have
\[
\rho(y)=\Bigl(q\comp\bigl((\omega_{1,1}\tens\id)\comp\Lambda\bigr)\Bigr)(y),
\]
and since $\rho\colon\C(\MM_{\PP,\OO})\to\sB/\sJ$ is a $*$-homomorphism, the hyperrigidity of $\SS_{\PP,\OO}\subset\C(\MM_{\PP,\OO})$, (cf.~Theorem \ref{thm_hyper}), implies that $q\comp\bigl((\omega_{1,1}\tens\id)\comp\Lambda\bigr)=\rho$ on the whole \cst-algebra $\C(\MM_{\PP,\OO})$. In particular $(\omega_{1,1}\tens\id)\comp\Lambda$ is a completely positive lift of $\rho$.

Having established that $*$-homomorphisms $\C(\MM_{\PP,\OO})\to\sB/\sJ$ admit u.c.p.~lifts we use Kasparov's theorem again to obtain lifts of u.c.p.~maps: let $\ph\colon\C(\MM_{\PP,\OO})\to\sB/\sJ$ be a u.c.p.~map. Then $\ph=\Phi_{1,1}$, where $\Phi\in\Mor(\C(\MM_{\PP,\OO}),\cK\tens(\sB/\sJ))$. Since $\M(\cK\tens(\sB/\sJ))$ is the image of $\M(\cK\tens\sB)$ under the canonical extension of $\id\tens{q}\colon\cK\tens\sB\to\cK\tens(\sB/\sJ)$ (by \cite[Proposition 6.8]{lance}, owing to the fact that the \cst-algebra $\cK\tens\sB$ is $\sigma$-unital) it has a completely positive unital lift $\widetilde{\Phi}\colon\C(\MM_{\PP,\OO})\to\M(\cK\tens\sB)$, so
\[
\ph=(\omega_{1,1}\tens\id)\comp\Phi=(\omega_{1,1}\tens\id)\comp(\id\tens{q})\comp\widetilde{\Phi}=q\comp\bigl((\omega_{1,1}\tens\id)\comp\widetilde{\Phi}\bigr),
\]
i.e.~$(\omega_{1,1}\tens\id)\comp\widetilde{\Phi}$ is a u.c.p.~lift of $\ph$.
\end{proof}

\begin{remark}\label{rem:nonnuc}
All separable nuclear \cst-algebras have the lifting property (by the Choi-Effros theorem). However the \cst-algebras $\C(\MM_{\PP,\OO})$ are usually not nuclear. For example denoting by $\boldsymbol{2}$ and $\boldsymbol{3}$ the two- and three-point space respectively, by \cite[Corollary II.2]{synchr} we have
$\C(\MM_{\boldsymbol{2},\boldsymbol{3}})\cong\CC^2\fp\CC^2\fp\CC^2\cong\cst(\ZZ_2\fp\ZZ_2\fp\ZZ_2)$ and $\C(\MM_{\boldsymbol{3},\boldsymbol{2}})\cong\CC^3\fp\CC^3\cong\cst(\ZZ_3\fp\ZZ_3)$ which are both non-nuclear (\cite[Theorem 1.1]{paschke}).

In view of statements \eqref{functoriality:1} and \eqref{functoriality:2} of Theorem \ref{thm:functoriality} we immediately find that
\begin{itemize}
\item if $\C(\OO)$ has at least two characters and $\dim{\C(\PP)}\in\{3,5,6.\dotsc\}$ then there is a surjective $\pi\colon\C(\OO)\to\C(\boldsymbol{2})$ and an injective $\rho\colon\C(\boldsymbol{3})\to\C(\PP)$ and hence $\C(\MM_{\PP,\OO})$ surjects onto $\C(\MM_{\boldsymbol{3},\boldsymbol{2}})$,
\item if $\C(\OO)$ has at least three characters and $\dim{\C(\PP)}>1$ then there is a surjective $\pi\colon\C(\OO)\to\C(\boldsymbol{3})$ and an injective $\rho\colon\C(\boldsymbol{2})\to\C(\PP)$ and hence $\C(\MM_{\PP,\OO})$ surjects onto $\C(\MM_{\boldsymbol{2},\boldsymbol{3}})$.
\end{itemize}
Consequently $\C(\MM_{\PP,\OO})$ is not nuclear in either case (\cite[Corollary 2.5]{wassermann}).

Similarly if $\C(\OO)$ has $\Mat_n(\CC)$ with $n\geq{2}$ as a quotient and $\dim{\C(\PP)}>1$ then $\C(\MM_{\PP,\OO})$ surjects onto $\C(\MM_{\boldsymbol{2},\OO})\cong\Mat_n(\CC)\fp\Mat_n(\CC)$ which is non-nuclear by \cite[Proposition 6]{duncan}. In particular if $\OO$ is finite, $\dim{\C(\OO)}>2$ and $\dim{\C(\PP)}>1$ then $\C(\MM_{\PP,\OO})$ is not nuclear.
\end{remark}

Now we can show that the universal operator system $\SS_{\PP,\OO}$ has the analogous version of lifting property:

\begin{theorem}
Let $\PP$ be a finite quantum space and $\OO$ a compact quantum space such that $\C(\OO)$ is separable and has the lifting property. Then the operator system $\SS_{\PP,\OO}$ has the lifting property: given a unital \cst-algebra $\sB$ with an ideal $\sJ\subset\sB$ and a u.c.p.~map $\ph\colon\SS_{\PP,\OO}\to\sB/\sJ$ there exists a u.c.p.~lift $\widetilde{\ph}\colon\SS_{\PP,\OO}\to\sB$ of $\ph$.
\end{theorem}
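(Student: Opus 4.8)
The plan is to combine the two main results already established for the pair $\SS_{\PP,\OO}\subset\C(\MM_{\PP,\OO})$: the strong extension property (Theorem~\ref{strongArveson}) and the lifting property of the ambient \cst-algebra $\C(\MM_{\PP,\OO})$ (Theorem~\ref{LPprop}). Both hypotheses needed for these results, namely separability of $\C(\OO)$ and the lifting property of $\C(\OO)$, are exactly what we assume here. The idea is therefore to lift $\ph$ by first moving the problem up to the level of $\C(\MM_{\PP,\OO})$, solving it there, and then restricting back down to $\SS_{\PP,\OO}$.

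Concretely, let $q\colon\sB\to\sB/\sJ$ denote the quotient map. First I would apply Theorem~\ref{strongArveson} to the given u.c.p.~map $\ph\colon\SS_{\PP,\OO}\to\sB/\sJ$, obtaining a u.c.p.~extension $\widehat{\ph}\colon\C(\MM_{\PP,\OO})\to\sB/\sJ$ with $\widehat{\ph}|_{\SS_{\PP,\OO}}=\ph$. Next, since $\C(\MM_{\PP,\OO})$ has the lifting property by Theorem~\ref{LPprop}, the u.c.p.~map $\widehat{\ph}$ admits a u.c.p.~lift $\Xi\colon\C(\MM_{\PP,\OO})\to\sB$, that is $q\comp\Xi=\widehat{\ph}$.

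Finally I would set $\widetilde{\ph}=\Xi|_{\SS_{\PP,\OO}}$. The restriction of a u.c.p.~map to an operator system is again u.c.p., so $\widetilde{\ph}\colon\SS_{\PP,\OO}\to\sB$ is u.c.p., and
\[
q\comp\widetilde{\ph}=(q\comp\Xi)|_{\SS_{\PP,\OO}}=\widehat{\ph}|_{\SS_{\PP,\OO}}=\ph,
\]
so $\widetilde{\ph}$ is the required lift.

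I do not expect a genuine obstacle in this final step: the real content lies in the two theorems being invoked. The only points that must be checked are that both apply under the present hypotheses and that the two constructions are chained in the correct order, since one must \emph{extend before lifting}: the lifting property is a statement about the \cst-algebra $\C(\MM_{\PP,\OO})$ and not directly about the operator system. In other words, the strong extension property is precisely the tool that transports the lifting property of the big algebra down to $\SS_{\PP,\OO}$.
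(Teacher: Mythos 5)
Your proposal is correct and is essentially identical to the paper's own proof: extend $\ph$ to $\C(\MM_{\PP,\OO})$ via the strong extension property (Theorem~\ref{strongArveson}), lift using the lifting property of $\C(\MM_{\PP,\OO})$ (Theorem~\ref{LPprop}), and restrict back to $\SS_{\PP,\OO}$. The order of operations you emphasize (extend first, then lift) is exactly the one the paper uses, and no further verification is needed.
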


\begin{proof}
Using the strong extension property of $\SS_{\PP,\OO}\subset\C(\MM_{\PP,\OO})$ (Theorem \ref{strongArveson}) we can find a u.c.p.~extension $\eta\colon\C(\MM_{\PP,\OO})\to\sB/\sJ$. Since $\C(\MM_{\PP,\OO})$ has the lifting property, there is a u.c.p.~lift $\widetilde\eta\colon\C(\MM_{\PP,\OO})\to\sB$ of $\eta$. We define $\widetilde{\ph}=\bigl.\widetilde\eta\bigr|_{\SS_{\PP,\OO}}$.
\end{proof}

\section{Realizable non-signalling correlations}\label{sect:realizable}

In this section we will study a class of correlations on finite quantum spaces which we will call \emph{realizable} -- this terminology will be explained below. When the quantum sets are taken to be classical, this notion reduces to the so-called $\mathrm{qc}$-correlations. Thus in this and following section $\PP$ and $\OO$ are finite quantum spaces playing the role of the question and answer (input and output) sets for Alice and Bob.

We begin with an analog of a positive operator values measure (POVM) on a quantum space.

\begin{definition}\label{def:qPOVM}
Let $\OO$ be a finite quantum space and $\sH$ a Hilbert space. A u.c.p.~map $\C(\OO)\to\B(\sH)$ will be called a \emph{quantum positive operator-valued measure} (quantum POVM) on $\OO$. If $\PP$ is another finite quantum space then a u.c.p.~map $\C(\OO)\to\C(\PP)\tens\B(\sH)$ will be referred to as a \emph{quantum family} of POVMs on $\OO$ indexed by $\PP$.
\end{definition}

\begin{remark}
One can obviously replace $\B(\sH)$ in Definition \ref{def:qPOVM} by a fixed unital \cst-algebra $\sA$. Clearly a quantum family of POVMs on $\OO$ (as in Definition \ref{def:qPOVM}) is at the same time a $\C(\PP)\tens\B(\sH)$-valued quantum POVM on $\OO$.
\end{remark}

In what follows we will use the \emph{leg numbering notation} well known in the theory of quantum groups. Let $\sA,\sB$ and $\sC$ be \cst-algebras. We define
\begin{align*}
\iota_{12}&\in\Mor(\sA\tens\sB,\sA\tens\sB\tens\sC),\\
\iota_{23}&\in\Mor(\sA\tens\sB,\sC\tens\sA\tens\sB),\\
\iota_{13}&\in\Mor(\sA\tens\sB,\sA\tens\sC\tens\sB)
\end{align*}
by
\[
\begin{array}{r@{\;=\;}l}
\iota_{12}(a\tens{b})&a\tens{b}\tens\I,\\
\iota_{23}(a\tens{b})&a\tens\I\tens{b},\\
\iota_{13}(a\tens{b})&\I\tens{a}\tens{b},
\end{array}\qquad\quad{a}\in\sA,\:b\in\sB.
\]
For $1\leq{i<j}\leq{3}$ the image of an element $x\in\sA\tens\sB$ under $\iota_{ij}$ will be denoted by $x_{ij}$. Note that if $\Phi$ is another \cst-algebra and $\Phi\in\Mor(\sD,\sA\tens\sB)$ then the mapping
\[
d\longmapsto\Phi(d)_{ij}
\]
from $D$ to the multiplier algebra of the appropriate tensor product is a morphism of \cst-algebras. We will also use analogous notation for operator systems and unital completely positive maps.\footnote{Note that an operator system contains a unit element, so that all maps involved are well defined.}

\begin{definition}
Let $\OO$ and $\PP$ be finite quantum spaces. A u.c.p.~map
\[
T\colon\C(\OO)\tens\C(\OO)\to\C(\PP)\tens\C(\PP)
\]
will be called a \emph{quantum correlation with quantum set of questions $\PP$ and quantum set of answers $\OO$}. We will abbreviate this to the shorter phrase ``$(\PP,\OO)$-correlation.''
\end{definition}

\begin{definition}
Let $\PP$ and $\OO$ be finite quantum spaces. A $(\PP,\OO)$-correlation $T$ will be called \emph{non-signaling} if 
\begin{equation}\label{non-sig}
T\bigl(\C(\OO)\tens\I_{\C(\OO)}\bigr)\subset\C(\PP)\tens\I_{\C(\PP)},\quad
T\bigl(\I_{\C(\OO)}\tens\C(\OO)\bigr)\subset\I_{\C(\PP)}\tens\C(\PP).
\end{equation}
\end{definition}

\begin{remark}
The definition of a non-signalling correlation was introduced in \cite[Section B]{DuanWinter} in the case $\C(\PP)$ and $\C(\OO)$ are full matrix algebras (however two different sets of questions and two sets of answers are allowed). Our definition is dual to that of \cite{DuanWinter} requiring unitality instead of preservation of the trace. It is clear that specifying $\PP$ and $\OO$ to be classical finite sets reduces condition \eqref{non-sig} to the existence of marginals.
\end{remark}

\begin{proposition}\label{PropReali}
Let $\PP$ and $\OO$ be finite quantum spaces, $\sH$ a Hilbert space and $\omega$ a state on $\B(\sH)$. Let $\ph_1,\ph_2\colon\C(\OO)\to\C(\PP)\tens\B(\sH)$ be quantum families of POVMs on $\OO$ indexed by $\PP$ such that
\begin{equation}\label{eq:comRL}
\ph_1(x)_{13}\ph_2(y)_{23}=\ph_2(y)_{23}\ph_1(x)_{13},\qquad{x,y}\in\C(\OO).
\end{equation}
Then there exists a unique linear map $T\colon\C(\OO)\tens\C(\OO)\to\C(\PP)\tens\C(\PP)$ such that
\[
T(x\tens{y})=(\id\tens\id\tens\omega)\bigl(\ph_1(x)_{13}\ph_2(y)_{23}\bigr),\qquad{x,y}\in\C(\OO)
\]
and $T$ is a non-signalling $(\PP,\OO)$-correlation.
\end{proposition}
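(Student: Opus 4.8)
\emph{Proof proposal.} The plan is to obtain $T$ as a composition of two u.c.p.\ maps: first the \emph{product} $\ph_1\cdot\ph_2$ of the two families (in the sense of \cite[Section 6]{KPTT}, whose defining property is exactly complete positivity against commuting u.c.p.\ maps), and then a slice by the state $\omega$. Uniqueness is immediate and I would dispose of it first: the elementary tensors $x\tens y$ span the finite-dimensional space $\C(\OO)\tens\C(\OO)$, so the prescribed formula pins $T$ down on a spanning set. Thus the whole content is the existence of $T$, its complete positivity, and the non-signalling property; and since a $(\PP,\OO)$-correlation is by definition nothing but a u.c.p.\ map $\C(\OO)\tens\C(\OO)\to\C(\PP)\tens\C(\PP)$, establishing that $T$ is u.c.p.\ already makes it a $(\PP,\OO)$-correlation.

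For existence, I would first represent the target faithfully and unitally on a Hilbert space: as $\C(\PP)$ is finite-dimensional we may take $\C(\PP)\subset\B(\cH_0)$ with $\cH_0$ finite-dimensional and regard $\C(\PP)\tens\C(\PP)\tens\B(\sH)$ as a unital \cst-subalgebra of $\B(\cH)$ with $\cH=\cH_0\tens\cH_0\tens\sH$. The leg-numbered maps $\Theta_1\colon x\mapsto\ph_1(x)_{13}$ and $\Theta_2\colon y\mapsto\ph_2(y)_{23}$ are u.c.p.\ (each is $\ph_i$ followed by the unital $*$-homomorphism inserting $\I$ into the remaining $\C(\PP)$-leg), take values in $\B(\cH)$, and have commuting ranges by precisely hypothesis \eqref{eq:comRL}. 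Hence the product map $\Theta_1\cdot\Theta_2$, determined on the algebraic tensor product by $(\Theta_1\cdot\Theta_2)(x\tens y)=\Theta_1(x)\Theta_2(y)=\ph_1(x)_{13}\ph_2(y)_{23}$, is u.c.p.\ on the commuting tensor product $\C(\OO)\tens_\cc\C(\OO)$. Because $\C(\OO)$ is finite-dimensional, hence nuclear, \cite[Theorem 6.7]{KPTT} gives $\C(\OO)\tens_\cc\C(\OO)=\C(\OO)\tens_\mmax\C(\OO)=\C(\OO)\tens\C(\OO)$, so $\Theta_1\cdot\Theta_2$ is a u.c.p.\ map out of $\C(\OO)\tens\C(\OO)$. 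Its values on elementary tensors lie in the subalgebra $\C(\PP)\tens\C(\PP)\tens\B(\sH)$, and as these span a dense subspace and the map is bounded, the entire range lies there. Composing with the slice $\id\tens\id\tens\omega$, which is u.c.p.\ since $\omega$ is a state, yields the required u.c.p.\ map $T$ satisfying the stated formula. The only genuinely non-routine point is this existence-and-positivity step: everything hinges on \eqref{eq:comRL} being exactly the commuting-ranges condition demanded by the commuting tensor product construction, together with the identification $\C(\OO)\tens_\cc\C(\OO)=\C(\OO)\tens\C(\OO)$ furnished by nuclearity.

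The non-signalling property I would then verify by a direct computation using unitality of the families. For $x\in\C(\OO)$, unitality of $\ph_2$ gives $\ph_2(\I)=\I_{\C(\PP)}\tens\I_{\B(\sH)}$, hence $\ph_2(\I)_{23}=\I$, so that $T(x\tens\I)=(\id\tens\id\tens\omega)\bigl(\ph_1(x)_{13}\bigr)$; since $\omega$ slices only the third leg while the identity occupies the second, this equals $\bigl((\id\tens\omega)\ph_1(x)\bigr)\tens\I_{\C(\PP)}\in\C(\PP)\tens\I_{\C(\PP)}$. Symmetrically, unitality of $\ph_1$ gives $\ph_1(\I)_{13}=\I$, whence $T(\I\tens y)=\I_{\C(\PP)}\tens\bigl((\id\tens\omega)\ph_2(y)\bigr)\in\I_{\C(\PP)}\tens\C(\PP)$. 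These are exactly conditions \eqref{non-sig}, completing the argument.
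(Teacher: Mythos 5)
Your proposal is correct, and its skeleton coincides with the paper's: both form the product map $x\tens y\mapsto\ph_1(x)_{13}\ph_2(y)_{23}$ on $\C(\OO)\tens\C(\OO)$ (equal to the algebraic tensor product by finite-dimensionality), compose with the slice $\id\tens\id\tens\omega$, and read off non-signalling from unitality of $\ph_1$ and $\ph_2$ exactly as you do. The one genuine difference is how complete positivity of the product map is justified. The paper verifies it by hand: it checks unitality, positivity on elements $(x\tens y)^*(x\tens y)$, and complete positivity by observing that the relevant matrix amplifications are products of two commuting positive elements of $\C(\PP)\tens\C(\PP)\tens\B(\sH)$. You instead represent everything on $\cH_0\tens\cH_0\tens\sH$, recognize $\Theta_1\cdot\Theta_2$ as the canonical u.c.p.\ map out of the commuting tensor product $\C(\OO)\tens_\cc\C(\OO)$ of \cite{KPTT}, and then collapse $\tens_\cc=\tens_\mmax=\tens_{\min}$ using \cite[Theorem 6.7]{KPTT} and nuclearity of the finite-dimensional algebra $\C(\OO)$. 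Both routes are valid; the paper's is elementary and self-contained, whereas yours imports the nontrivial identification of $\tens_\cc$ with $\tens_\mmax$ over a unital \cst-algebra but in exchange makes explicit the link to the operator-system tensor-product machinery that the paper itself exploits later (Lemma \ref{orderEmb}, Corollary \ref{corOrderEmb} and Theorem \ref{ThmRealizable}\,\eqref{ThmRealizable4}). Your handling of uniqueness, of the range landing in $\C(\PP)\tens\C(\PP)\tens\B(\sH)$, and of the non-signalling conditions \eqref{non-sig} is routine and matches the paper's.
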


\begin{proof}
Since $\C(\OO)$ is finite-dimensional, the tensor product $\C(\OO)\tens\C(\OO)$ is equal to the algebraic tensor product $\C(\OO)\tens_\alg\C(\OO)$ on which we define the linear map
\[
\ph_1\cdot\ph_2\colon\C(\OO)\tens\alg\C(\OO)\ni{x}\tens{y}\longmapsto\ph_1(x)_{13}\ph_2(y)_{23}\in\C(\PP)\tens\C(\PP)\tens\B(\sH).
\]
Since both maps $\ph_1$ and $\ph_2$ are unital, $\ph_1\cdot\ph_2$ is also unital. Furthermore, for any $x,y\in\C(\OO)$ we have
\[
(\ph_1\cdot\ph_2)\bigl((x\tens y)^*(x\tens y)\bigr)=(\ph_1\cdot\ph_2)(x^*x\tens{y^*y})=\ph_1(x^*x)_{13}\ph_2(y^*y)_{23},
\]
and the complete positivity of maps $\ph_1$ and $\ph_2$ implies that $\ph_1\cdot\ph_2$ is positive.

Moreover $\ph_1\cdot\ph_2$ is completely positive, since for $a\in\Mat_m(\C(\OO))$ we have
\[
(\ph_1\cdot\ph_2)_m(a)={\ph_1}_m(a)_{13}{\ph_2}_m(a)_{23}
\]
is a product of two commuting positive elements of $\C(\PP)\tens\C(\PP)\tens\B(\sH)$ (by complete positivity of $\ph_1$ and $\ph_2$). Finally $T=(\id\tens\id\tens\omega)\comp(\ph_1\cdot\ph_2)$ is completely positive as the composition of completely positive maps (cf.~\cite[Section 1.5.4]{wassermann}).

Since both $\phi_1$ and $\phi_2$ are unital, we immediately see that $T(\C(\OO)\tens\I_{\C(\OO)})\subset\C(\PP)\tens\I_{\C(\PP)}$ and $T(\I_{\C(\OO)}\tens\C(\OO))\subset\I_{\C(\PP)}\tens\C(\PP)$, hence $T$ is non-signaling.
\end{proof}

We refer to the condition \eqref{eq:comRL} by saying that $\ph_1$ and $\ph_2$ \emph{commute on the second leg}. Note that using the version of Stinespring theorem for pairs of c.p.~maps with commuting ranges (such as \cite[Theorem 1.6]{wassermann}) we can loosen the assumption that $\C(\OO)$ is finite-dimensional.

\begin{definition}
Let $\PP$ and $\OO$ be finite quantum spaces. A $(\PP,\OO)$-correlation $T$ obtained from two quantum families of POVMs $\ph_1,\ph_2\colon\C(\OO)\to\C(\PP)\tens\B(\sH)$ commuting on the second leg via the construction described in Proposition \ref{PropReali} will be called \emph{realizable} and the triple $(\ph_1,\ph_2,\omega)$ will be referred to as the \emph{realization} of $T$.
\end{definition}

The next theorem provides equivalent descriptions of a class of realizable correlations.

\begin{theorem}\label{ThmRealizable}
Let $T\colon\C(\OO)\tens\C(\OO)\to \C(\PP)\tens \C(\PP)$ be a $(\PP,\OO)$-correlation. Then the following conditions are equivalent:
\begin{enumerate}
\item\label{ThmRealizable1} there exists a Hilbert space $\sH$, a pair of u.c.p.~maps $\ph_1,\ph_2\colon\C(\OO)\to\C(\PP)\tens\B(\sH)$ satisfying
\[
\ph_1(x)_{13}\ph_2(y)_{23}=\ph_2(y)_{23}\ph_1(x)_{13},\qquad{x}\in\C(\OO)
\]
and a norm-one vector $\xi\in\sH$ such that
\[
T(x\tens{y})=(\id\tens \id\tens\omega_\xi)\bigl(\ph_1(x)_{13}\ph_2(y)_{23}\bigr),\qquad{x,y}\in\C(\OO),
\]
\item\label{ThmRealizable2} there exists a Hilbert space $\sH$, a pair of unital $*$-homomorphisms $\Phi_1,\Phi_2\colon\C(\OO)\to\C(\PP)\tens\B(\sH)$ satisfying
\[
\Phi_1(x)_{13}\Phi_2(y)_{23}=\Phi_2(y)_{23}\Phi_1(x)_{13},\qquad{x}\in\C(\OO)
\]
and a norm-one vector $\xi\in\sH$ such that
\[
T(x\tens{y})=(\id\tens\id\tens\omega_\xi)\bigl(\Phi_1(x)_{13}\Phi_2(y)_{23}\bigr),\qquad{x,y}\in\C(\OO),
\]
\item\label{ThmRealizable3} there exists a state $\sigma$ on $\C(\MM_{\PP,\OO})\tens_\mmax\C(\MM_{\PP,\OO})$ such that
\[
T(x\tens{y})=(\id\tens\id\tens\sigma)\bigl(\bPhi_{\PP,\OO}(x)_{13}\bPhi_{\PP,\OO}(y)_{24}\bigr),\qquad{x,y}\in\C(\OO),
\]
\item\label{ThmRealizable4} there exists a state $\sigma$ on $\SS_{\PP,\OO}\tens_\cc\SS_{\PP,\OO}$ such that
\[
T(x\tens{y})=(\id\tens\id\tens\sigma)\bigl(\bph_{\PP,\OO}(x)_{13}\bph_{\PP,\OO}(y)_{24}\bigr),\qquad{x,y}\in\C(\OO).
\]
\end{enumerate}
\end{theorem}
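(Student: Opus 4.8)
The plan is to prove the cycle of implications $\eqref{ThmRealizable2}\Rightarrow\eqref{ThmRealizable1}\Rightarrow\eqref{ThmRealizable4}\Rightarrow\eqref{ThmRealizable3}\Rightarrow\eqref{ThmRealizable2}$, which yields all four equivalences at once. The implication $\eqref{ThmRealizable2}\Rightarrow\eqref{ThmRealizable1}$ is immediate, since a unital $*$-homomorphism is in particular a u.c.p.~map and the two displayed formulas literally coincide. Throughout one notes that $\OO$ finite forces $\C(\OO)$ to be finite-dimensional, hence separable, so that Theorem \ref{thm_op_syst} and the order-embedding results of Section \ref{sect:SPO} apply without extra hypotheses.

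For $\eqref{ThmRealizable1}\Rightarrow\eqref{ThmRealizable4}$ I would start from the pair $\ph_1,\ph_2\colon\C(\OO)\to\C(\PP)\tens\B(\sH)$ and invoke the universal property of $\SS_{\PP,\OO}$ (Theorem \ref{thm_op_syst}) with target operator system $\sS=\B(\sH)$, obtaining u.c.p.~maps $\lambda_1,\lambda_2\colon\SS_{\PP,\OO}\to\B(\sH)$ with $\ph_i=(\id\tens\lambda_i)\comp\bph_{\PP,\OO}$. The key step is to check that $\lambda_1$ and $\lambda_2$ have commuting ranges: slicing the first and second legs of the hypothesis $\ph_1(x)_{13}\ph_2(y)_{23}=\ph_2(y)_{23}\ph_1(x)_{13}$ by functionals $\omega,\omega'\in\C(\PP)^*$ gives $\bigl((\omega\tens\id)\ph_1(x)\bigr)\bigl((\omega'\tens\id)\ph_2(y)\bigr)=\bigl((\omega'\tens\id)\ph_2(y)\bigr)\bigl((\omega\tens\id)\ph_1(x)\bigr)$, and since such slices span the ranges of $\lambda_1$ and $\lambda_2$, those ranges commute. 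By the definition of the commuting tensor product the pair then induces a u.c.p.~map $\lambda_1\cdot\lambda_2\colon\SS_{\PP,\OO}\tens_\cc\SS_{\PP,\OO}\to\B(\sH)$, so that $\sigma=\omega_\xi\comp(\lambda_1\cdot\lambda_2)$ is a state; expanding $\bph_{\PP,\OO}(x)$ and $\bph_{\PP,\OO}(y)$ into elementary tensors and tracking the leg-numbering shows $\sigma$ reproduces $T$ as in $\eqref{ThmRealizable4}$.

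The implication $\eqref{ThmRealizable4}\Rightarrow\eqref{ThmRealizable3}$ is where I would use Corollary \ref{corOrderEmb}: the canonical map $\SS_{\PP,\OO}\tens_\cc\SS_{\PP,\OO}\to\C(\MM_{\PP,\OO})\tens_\mmax\C(\MM_{\PP,\OO})$ is a complete order embedding, so by Arveson's extension theorem (a state is a u.c.p.~map into the injective operator system $\CC$) the state $\sigma$ of $\eqref{ThmRealizable4}$ extends to a state on $\C(\MM_{\PP,\OO})\tens_\mmax\C(\MM_{\PP,\OO})$; the formula for $T$ is preserved because $\bPhi_{\PP,\OO}=(\id\tens\iota)\comp\bph_{\PP,\OO}$ by Remark \ref{embSS} and the embedding is compatible with $\iota\tens\iota$. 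Finally, for $\eqref{ThmRealizable3}\Rightarrow\eqref{ThmRealizable2}$ I would pass to the GNS triple $(\sH_\sigma,\Pi,\xi_\sigma)$ of the state $\sigma$ on the maximal tensor product. By the universal property of $\tens_\mmax$ the representation $\Pi$ splits into two commuting representations $\Pi_1,\Pi_2\colon\C(\MM_{\PP,\OO})\to\B(\sH_\sigma)$ with $\Pi(m\tens{n})=\Pi_1(m)\Pi_2(n)$; setting $\Phi_i=(\id\tens\Pi_i)\comp\bPhi_{\PP,\OO}$ yields unital $*$-homomorphisms $\C(\OO)\to\C(\PP)\tens\B(\sH_\sigma)$ whose ranges commute on the third leg, and $\omega_{\xi_\sigma}\bigl(\Pi_1(m)\Pi_2(n)\bigr)=\sigma(m\tens{n})$ delivers exactly the formula of $\eqref{ThmRealizable2}$.

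I expect the main conceptual obstacle to be the passage in $\eqref{ThmRealizable3}\Rightarrow\eqref{ThmRealizable2}$, namely the upgrade from u.c.p.~realizations to honest $*$-homomorphisms: this is precisely where one exploits that a state on the \emph{maximal} tensor product, through its GNS dilation, encodes a genuine pair of commuting representations, and where the universal (rather than merely dilating) property of $\bPhi_{\PP,\OO}$ is indispensable. The remaining bookkeeping—chiefly keeping the leg-numbering indices $13,23,24$ consistent when slicing and when matching the four formulas—is routine but must be carried out carefully, and the only analytic input beyond the cited results is Arveson's extension theorem used in $\eqref{ThmRealizable4}\Rightarrow\eqref{ThmRealizable3}$.
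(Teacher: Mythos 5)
Your proposal is correct and follows essentially the same route as the paper: each implication (the universal property of $\SS_{\PP,\OO}$ for \eqref{ThmRealizable1}$\Rightarrow$\eqref{ThmRealizable4}, the complete order embedding of Corollary \ref{corOrderEmb} plus state extension for \eqref{ThmRealizable4}$\Rightarrow$\eqref{ThmRealizable3}, and the GNS construction for \eqref{ThmRealizable3}$\Rightarrow$\eqref{ThmRealizable2}) is handled exactly as in the paper's proof. The only difference is organizational: you arrange the argument as a single cycle \eqref{ThmRealizable2}$\Rightarrow$\eqref{ThmRealizable1}$\Rightarrow$\eqref{ThmRealizable4}$\Rightarrow$\eqref{ThmRealizable3}$\Rightarrow$\eqref{ThmRealizable2}, whereas the paper proves a slightly redundant set of pairwise implications.
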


\begin{proof}
Ad \eqref{ThmRealizable2} $\Rightarrow$ \eqref{ThmRealizable3}: For $i=1,2$ let $\Lambda_i\colon\C(\MM_{\PP,\OO})\to\B(\sH)$ be the unique unital $*$-ho\-mo\-mor\-phisms satisfying
\[
\Phi_1=(\id\tens\Lambda_1)\comp\bPhi_{\PP,\OO}\quad\text{and}\quad
\Phi_2=(\id\tens\Lambda_2)\comp\bPhi_{\PP,\OO}.
\]
Since for any $\omega_1,\omega_2\in\C(\PP)^*$ the elements $(\omega_1\tens\id)\bigl(\Phi_1(x)\bigr)$ and $(\omega_2\tens\id)\bigl(\Phi_2(y)\bigr)$ commute, we have
\begin{equation}\label{Lambdaab}
\Lambda_1(a)\Lambda_2(b)=\Lambda_2(b)\Lambda_1(a)
\end{equation}
for all $a,b\in\bigl\{(\omega\tens\id)\bPhi_{\PP,\OO}(x)\,\bigr|\bigl.\,x\in\C(\OO),\:\omega\in\C(\PP)^*\bigr\}$. This set generates the \cst-algebra $\C(\MM_{\PP,\OO})$, so \eqref{Lambdaab} is satisfied for all $a,b\in\C(\MM_{\PP,\OO})$. It follows that there is a unique $\Pi\colon\C(\MM_{\PP,\OO})\tens_\mmax\C(\MM_{\PP,\OO})\to\B(\sH)$ such that
\[
\Pi(a\tens{b})=\Lambda_1(a)\Lambda_2(b),\qquad{a,b}\in\C(\MM_{\PP,\OO}).
\]
Let $\sigma$ be the state on $\C(\MM_{\PP,\OO})\tens_\mmax\C(\MM_{\PP,\OO})$ defined as the composition of $\Pi$ and the vector functional $\omega_\xi$. Clearly for any $x,y\in\C(\OO)$
\begin{align*}
(\id\tens\id\tens\sigma)\bigl(\bPhi_{\PP,\OO}(x)_{13}\bPhi_{\PP,\OO}&(y)_{24}\bigr)
=(\id\tens\id\tens\omega_\xi)(\id\tens\id\tens\Pi)\bigl(\bPhi_{\PP,\OO}(x)_{13}\bPhi_{\PP,\OO}(y)_{24}\bigr)\\
&=(\id\tens\id\tens\omega_\xi)\Bigl(\bigl((\id\tens\Lambda_1)\bPhi_{\PP,\OO}(x)\bigr)_{13}
\bigl((\id\tens\Lambda_2)\bPhi_{\PP,\OO}(y)\bigr)_{23}\Bigr)\\
&=(\id\tens\id\tens\omega_\xi)\bigl(\Phi_1(x)_{13}\Phi_2(y)_{23}\bigr)=T(x\tens{y}).
\end{align*}

Ad \eqref{ThmRealizable3} $\Rightarrow$ \eqref{ThmRealizable2}: Let $T$ be given by the formula in \eqref{ThmRealizable3} for some $\sigma$ and let $(\sH_\sigma,\pi_\sigma,\Omega_\sigma)$ be the GNS triple for $\sigma$. Define $\Lambda_1,\Lambda_2\colon\C(\MM_{\PP,\OO})\to\B(\sH_\sigma)$ by
\[
\begin{array}{r@{\;}l}
\Lambda_1(a)&=\pi_\sigma(a\tens\I),\\
\Lambda_2(a)&=\pi_\sigma(\I\tens{a}),
\end{array}\qquad\quad{a}\in\C(\MM_{\PP,\OO})
\]
and let $\Phi_i=(\id\tens\Lambda_i)\comp\bPhi_{\PP,\OO}$ ($i=1,2$). Since $\Lambda_1(a)\Lambda_2(b)=\pi_\sigma(a\tens{b})$ for all $a,b$, we have
\begin{align*}
(\id\tens\id\tens\omega_{\Omega_\sigma})\bigl(\Phi_1(x)_{13}\Phi_2(y)_{23}\bigr)
&=(\id\tens\id\tens\omega_{\Omega_\sigma})(\id\tens\id\tens\pi_\sigma)\bigl(\bPhi_{\PP,\OO}(x)_{13}\bPhi_{\PP,\OO}(y)_{24}\bigr)\\
&=(\id\tens\id\tens\sigma)\bigl(\bPhi_{\PP,\OO}(x)_{13}\bPhi_{\PP,\OO}(y)_{24}\bigr)=T(x\tens{y})
\end{align*}
for all $x,y\in\C(\OO)$.

Ad \eqref{ThmRealizable3} $\Rightarrow$ \eqref{ThmRealizable4}: By Lemma \ref{corOrderEmb} we can treat $\SS_{\PP,\OO}\tens_\cc\SS_{\PP,\OO}$ as a real ordered subspace of $\C(\MM_{\PP,\OO})\tens_\mmax\C(\MM_{\PP,\OO})$. Thus \eqref{ThmRealizable4} follows from \eqref{ThmRealizable3} by taking restrictions (and Remark \ref{embSS}).

Ad \eqref{ThmRealizable4} $\Rightarrow$ \eqref{ThmRealizable3}: As above, we consider $\SS_{\PP,\OO}\tens_\cc\SS_{\PP,\OO}$ as embedded in $\C(\MM_{\PP,\OO})\tens_\mmax\C(\MM_{\PP,\OO})$. Then any state on $\SS_{\PP,\OO}\tens_\cc\SS_{\PP,\OO}$ extends to a state on $\C(\MM_{\PP,\OO})\tens_\mmax\C(\MM_{\PP,\OO})$ and the implication follows from Remark \ref{embSS}.

Ad \eqref{ThmRealizable1} $\Rightarrow$ \eqref{ThmRealizable4}: By the universal property of $(\SS_{\PP,\OO},\bph_{\PP,\OO})$ we have
\[
\ph_1=(\id\tens\lambda_1)\comp\bph_{\PP,\OO}\quad\text{and}\quad\ph_2=(\id\tens\lambda_2)\comp\bph_{\PP,\OO}
\]
for certain u.c.p.~maps $\lambda_1,\lambda_2\colon\SS_{\PP,\OO}\to\B(\sH)$. Moreover the fact that the ranges of $\ph_1$ and $\ph_2$ commute translates to the fact that the ranges of $\lambda_1$ and $\lambda_2$ commute just like in the proof of the implication \eqref{ThmRealizable2} $\Rightarrow$ \eqref{ThmRealizable3} above. Let $\sigma=\omega_\xi\comp(\lambda_1\cdot\lambda_2)$. Then $\sigma$ is a state on $\SS_{\PP,\OO}\tens_\cc\SS_{\PP,\OO}$ and for any $x,y\in\C(\OO)$
\begin{align*}
(\id\tens\id\tens\sigma)\bigl(\bph_{\PP,\OO}(x)_{13}\bph_{\PP,\OO}(y)_{24}\bigr)
&=(\id\tens\id\tens\omega_\xi)\bigl(\id\tens\id\tens(\lambda_1\cdot\lambda_2)\bigr)\bigl(\bph_{\PP,\OO}(x)_{13}\bph_{\PP,\OO}(y)_{24}\bigr)\\
&=(\id\tens\id\tens\omega_\xi)\bigl(\ph_1(x)_{13}\ph_2(y)_{23}\bigr)=T(x\tens{y}).
\end{align*}

The implication Ad \eqref{ThmRealizable2} $\Rightarrow$ \eqref{ThmRealizable1} is clear.
\end{proof}

\section{Synchronous correlations}\label{sect:synchr}

Let $\PP$ and $\OO$ be finite quantum spaces and let us specify the decompositions
\begin{equation}\label{CPCOdec}
\C(\PP)=\bigoplus_{l=1}^{N_\PP}\Mat_{m_l}(\CC)\quad\text{and}\quad
\C(\OO)=\bigoplus_{k=1}^{N_\OO}\Mat_{n_k}(\CC).
\end{equation}
Let $\bigl\{\prescript{l}{}{f_{st}}\,\bigr|\bigl.\,l=1\dotsc,N_\PP,\:s,t=1,\dotsc,m_l\bigr\}$ and $\bigl\{\prescript{k}{}{e_{ij}}\,\bigr|\bigl.\,k=1\dotsc,N_\OO,\:i,j=1,\dotsc,n_k\bigr\}$ be the corresponding systems of matrix units in $\C(\PP)$ and $\C(\OO)$.

Furthermore for each $l$ and $k$ let $\bigl\{\prescript{l}{}{f_s}\bigr\}_{s=1,\dotsc,m_l}$ and $\bigl\{\prescript{k}{}{e_i}\bigr\}_{i=1,\dotsc,n_k}$ be the standard bases of $\CC^{m_l}$ and $\CC^{n_k}$ respectively. Clearly we have
\[
\prescript{l}{}{f_{st}}=\bigket{\prescript{l}{}{f_{s}}}\bigbra{\prescript{l}{}{f_{t}}}
\quad\text{and}\quad
\prescript{k}{}{e_{ij}}=\bigket{\prescript{k}{}{e_{i}}}\bigbra{\prescript{k}{}{e_{j}}}
\]
for all $k,l,i,j,s,t$.

Given a $(\PP,\OO)$-correlation $T\colon\C(\OO)\tens\C(\OO)\to\C(\PP)\tens\C(\PP)$ and natural numbers
\begin{align*}
k,k'&\in\{1,\dotsc,N_\OO\}\\
l,l'&\in\{1,\dotsc,N_\PP\}\\
i,j&\in\{1,\dotsc, n_k\}\\
i',j'&\in\{1,\dotsc, n_{k'}\}\\
s,t&\in\{1,\dotsc,m_l\}\\
s',t'&\in\{1,\dotsc,m_{l'}\}
\end{align*}
we define 
\begin{equation}\label{XinC}
\prescript{kk'}{ll'}{X}_{(ij),(i'j')}^{(st),(s't')}\in\CC
\end{equation}
by
\begin{equation}\label{defXxx}
T(\prescript{k}{}e_{ij}\tens\prescript{k'}{}e_{i'j'})=\sum_{s,t,s',t',l,l'}\prescript{kk'}{ll'}{X}_{(ij),(i'j')}^{(st),(s't')}\bigl(\prescript{l}{}f_{st}\tens \prescript{l'}{}f_{s't'}\bigr).
\end{equation}

\begin{definition}
Let $\PP$ and $\OO$ be finite quantum spaces with decompositions of $\C(\PP)$ and $\C(\OO)$ as in \eqref{CPCOdec}. A $(\PP,\OO)$-correlation $T\colon\C(\OO)\tens\C(\OO)\to\C(\PP)\tens\C(\PP)$ is called \emph{synchronous} if
\[
\sum_{s,t,i,j,k,l}\tfrac{1}{n_km_l}\prescript{kk}{ll}{X}_{(ij),(ij)}^{(st),(st)}=N_\PP,
\]
where the coefficients $\prescript{kk'}{ll'}{X}_{(ij),(i'j')}^{(st),(s't')}$ are defined by \eqref{defXxx}.
\end{definition}

\begin{remark}
The definition of synchronicity proposed above may seem rather technical, but it is in fact strongly related to the classical notion of synchronicity (as in e.g.~\cite[Section II]{DykemaPaulsen}, \cite[Section 2]{HMPS}). In case the sets $\PP$ and $\OO$ are classical we have $n_k=m_l=1$ for all $k,l$ and there are no ``internal indices'' inside matrix blocks, so the collection of numbers \eqref{XinC} related to $T$ reduces to $\bigl\{\prescript{kk'}{ll'}{X}\,\bigr|\bigl.\,k,k'=1,\dotsc,N_\OO,\:l,l'=1,\dotsc,N_\PP\bigr\}$ and it corresponds to the classical correlation matrix via
\[
\prescript{kk'}{ll'}{X}=p(k,k'|l,l'),\qquad{k,k'}=1,\dotsc,N_\OO,\:l,l'=1,\dotsc,N_\PP.
\]
Note that in terms of a game with strategy $p(\cdot,\cdot|\cdot,\cdot)$ for a fixed $l$ the sum of $p(k,k'|l,l)$ over all $k,k'$ is the probability of giving any pair of answers to the pair of questions $(l,l)$, so $\sum\limits_{k,k'}p(k,k'|l,l)=1$. Thus the condition
\[
\sum\limits_{k,l}p(k,k|l,l)=N_\PP
\]
means that for each $l$ we must have $p(k,k'|l,l)=0$ whenever $k\neq{k'}$.
\end{remark}

The next proposition shows that synchronicity of a correlation can be checked by evaluating it on one particular element and then taking its expectation on the maximally entangled state.

\begin{proposition}\label{prop_synch1}
Let $\PP$ and $\OO$ be finite quantum spaces with decompositions of $\C(\PP)$ and $\C(\OO)$ as in \eqref{CPCOdec} and let $T\colon\C(\OO)\tens\C(\OO)\to\C(\PP)\tens\C(\PP)$ be a $(\PP,\OO)$-correlation. Then $T$ is synchronous if and only if 
\[
\is{\phi}{T\biggl(\sum_{i,j,k}\tfrac{1}{n_k}\prescript{k}{}e_{ij}\tens\prescript{k}{}e_{ij}\biggr)\phi}=1,
\]
where $\phi=\tfrac{1}{\sqrt{N_{\PP}}}\sum\limits_{l}\tfrac{1}{\sqrt{m_l}}\sum\limits_s\bigl(\prescript{l}{}f_s\tens \prescript{l}{}f_s\bigr)\in
\biggl(\bigoplus\limits_{l=1}^{N_\PP}\CC^{m_l}\biggr)\tens\biggl(\bigoplus\limits_{l=1}^{N_\PP}\CC^{m_l}\biggr)$.
\end{proposition}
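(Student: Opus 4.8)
The plan is to prove the stated equivalence by a completely explicit computation, reducing the right-hand side to a scalar multiple of the sum that appears in the definition of synchronicity. First I would use linearity of $T$ together with the defining relation \eqref{defXxx} (specialized to $k'=k$, $i'=i$, $j'=j$) to write
\[
T\biggl(\sum_{i,j,k}\tfrac{1}{n_k}\,\prescript{k}{}e_{ij}\tens\prescript{k}{}e_{ij}\biggr)
=\sum_{i,j,k}\tfrac{1}{n_k}\sum_{s,t,s',t',l,l'}\prescript{kk}{ll'}{X}_{(ij),(ij)}^{(st),(s't')}\bigl(\prescript{l}{}f_{st}\tens\prescript{l'}{}f_{s't'}\bigr),
\]
so that everything reduces to evaluating the vector functional $x\mapsto\is{\phi}{x\phi}$ on the elementary tensors $\prescript{l}{}f_{st}\tens\prescript{l'}{}f_{s't'}$.

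The crucial intermediate step is therefore to compute these ``matrix elements''. Using $\prescript{l}{}f_{st}=\bigket{\prescript{l}{}f_s}\bigbra{\prescript{l}{}f_t}$, the orthogonality of the standard basis vectors belonging to distinct matrix blocks, and the explicit form of $\phi$, I would show that
\[
\is{\phi}{\bigl(\prescript{l}{}f_{st}\tens\prescript{l'}{}f_{s't'}\bigr)\phi}=\frac{1}{N_\PP\,m_l}\,\delta_{l,l'}\,\delta_{s,s'}\,\delta_{t,t'}.
\]
Indeed, applying $\prescript{l}{}f_{st}\tens\prescript{l'}{}f_{s't'}$ to $\phi$ picks out from each leg only the summand indexed by the matching block and basis vector, which (because $\phi$ is supported on the diagonal vectors $\prescript{L}{}f_r\tens\prescript{L}{}f_r$) forces $l=l'$ and $t=t'$; pairing the result against $\phi$ once more forces in addition $s=s'$, and the two normalization factors $\tfrac{1}{\sqrt{N_\PP}}\tfrac{1}{\sqrt{m_l}}$ combine to $\tfrac{1}{N_\PP m_l}$.

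Substituting this matrix-element formula back into the expanded expression, the three Kronecker deltas collapse the six summation indices $s,t,s',t',l,l'$ down to $s,t,l$ (setting $s'=s$, $t'=t$, $l'=l$), which yields
\[
\is{\phi}{T\biggl(\sum_{i,j,k}\tfrac{1}{n_k}\,\prescript{k}{}e_{ij}\tens\prescript{k}{}e_{ij}\biggr)\phi}
=\frac{1}{N_\PP}\sum_{s,t,i,j,k,l}\frac{1}{n_k m_l}\,\prescript{kk}{ll}{X}_{(ij),(ij)}^{(st),(st)}.
\]
Comparing with the definition of synchronicity, the left-hand side equals $1$ precisely when the displayed sum equals $N_\PP$, which is exactly the synchronicity condition; this establishes both implications of the equivalence simultaneously. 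The only real obstacle is the multi-index bookkeeping: one must track the six summation indices carefully and confirm that the deltas produced by the inner product reproduce exactly the index pattern $\prescript{kk}{ll}{X}_{(ij),(ij)}^{(st),(st)}$ occurring in the definition, and that the overall prefactor $\tfrac{1}{N_\PP}$ is accounted for correctly so that the normalization condition $\is{\phi}{\,\cdot\,}=1$ matches the value $N_\PP$ in the synchronicity sum.
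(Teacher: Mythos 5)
Your proposal is correct and follows exactly the same route as the paper's proof: expand $T$ on the given element via the coefficients $\prescript{kk}{ll'}{X}$, compute $\is{\phi}{(\prescript{l}{}f_{st}\tens\prescript{l'}{}f_{s't'})\phi}=\tfrac{1}{N_\PP m_l}\delta_{ll'}\delta_{ss'}\delta_{tt'}$, and substitute to reduce the normalization condition to the synchronicity sum equalling $N_\PP$. The matrix-element formula and the final bookkeeping both check out.
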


\begin{proof}
We have
\[
T\biggl(\sum_{i,j,k}\tfrac{1}{n_k}\prescript{k}{}e_{ij}\tens\prescript{k}{}e_{ij}\biggr)
=\sum_{i,j,k,l,l's,s',t,t'}\tfrac{1}{n_k}\prescript{kk}{ll'}{X}_{(ij),(ij)}^{(st),(s't')}\bigl(\prescript{l}{}f_{st}\tens\prescript{l'}{}f_{s't'}\bigr).
\]
Furthermore for any $l,l',s,t,s',t'$
\begin{align*}
\bigis{\phi}{\bigl(\prescript{l}{}f_{st}\tens\prescript{l'}{}f_{s't'}\bigr)\phi}
&=\tfrac{1}{N_{\PP}}\sum_{l_1,l_2}\tfrac{1}{\sqrt{m_{l_1}}}\tfrac{1}{\sqrt{m_{l_2}}}
\sum_{s_1,s_2}\bigis{\prescript{l_1}{}f_{s_1}\tens\prescript{l_1}{}f_{s_1}}{\bigl(\prescript{l}{}f_{st}\tens\prescript{l'}{}f_{s't'}\bigr)\prescript{l_2}{}f_{s_2}\tens \prescript{l_2}{}f_{s_2}}\\
&=\tfrac{1}{N_{\PP}}\sum_{l_1,l_2}\tfrac{1}{\sqrt{m_{l_1}}}\tfrac{1}{\sqrt{m_{l_2}}}
\sum_{s_1,s_2}\delta_{ll_1}\delta_{l'l_1}\delta_{ll_2}\delta_{l'l_2}\delta_{ss_1}\delta_{s's_1}\delta_{ts_2}\delta_{t's_2}\\
&=\tfrac{1}{N_{\PP}}\sum_{l_1,l_2}\tfrac{1}{\sqrt{m_{l_1}}}\tfrac{1}{\sqrt{m_{l_2}}}\delta_{ll_1}\delta_{l'l_1}\delta_{ll_2}\delta_{l'l_2}\delta_{ss'}\delta_{tt'}\\
&=\tfrac{1}{N_{\PP}}\tfrac{1}{m_{l}} \delta_{ll'}\delta_{ss'}\delta_{tt'}.
\end{align*}
Substituting this into the first equality we obtain
\[
\is{\phi}{T\biggl(\sum_{i,j,k}\tfrac{1}{n_k}\prescript{k}{}e_{ij}\tens\prescript{k}{}e_{ij}\biggr)\phi}=\tfrac{1}{N_{\PP}}\sum_{s,t,i,k,l,j}\tfrac{1}{n_km_l}\prescript{kk}{ll}{X}_{(ij),(ij)}^{(st),(st)}.
\]
\end{proof}

We now pass to the construction of certain synchronous correlations via the prescription provided by Theorem \ref{ThmRealizable}. It says that realizable $(\PP,\OO)$-correlation is given by a state on $\C(\MM_{\PP,\OO})\tens\mmax\C(\MM_{\PP,\OO})$. Let $\sigma$ be such a state. The corresponding $T_\sigma$ is defined by
\[
T_\sigma(x\tens{y})=(\id\tens\id\tens\sigma)\bigl(\bPhi_{\PP,\OO}(x)_{13}\bPhi_{\PP,\OO}(y)_{24}\bigr),\qquad{x,y}\in\C(\OO).
\]
We define $\prescript{k}{l}V_{ij}^{st}\in\C(\MM_{\PP,\OO})$ by
\[
\bPhi_{\PP,\OO}(\prescript{k}{}{e_{ij}})=\sum_{s,t,l}\prescript{l}{}f_{st}\tens\prescript{k}{l}V_{ij}^{st},
\qquad\quad
\begin{array}{r@{\;}l}
k&=1,\dotsc,N_\OO,\\
i,j&=1,\dotsc,n_k.
\end{array}
\]
With this notation we have
\[
T_\sigma\bigl(\prescript{k}{}{e_{ij}}\tens\prescript{k'}{}{e_{i'j'}}\bigr)=\sum_{s,t,s',t',l,l'}\sigma\bigl(\prescript{k}{l}{V^{st}_{ij}}\tens\prescript{k'}{l'}{V^{s't'}_{i'j'}}\bigr)
\bigl(\prescript{l}{}{f_{st}}\tens\prescript{l'}{}{f_{s't'}}\bigr)
\]
for all $k,k',i,j,i',j'$.

\begin{lemma}\label{sTau}
Let $\tau$ be a trace on $\C(\MM_{\PP,\OO})$. Then there exists a state $\sigma_\tau$ on $\C(\MM_{\PP,\OO})\tens_\mmax\C(\MM_{\PP,\OO})$ such that
\[
\sigma_\tau\bigl(\prescript{k}{l}{V^{st}_{ij}}\tens\prescript{k'}{l'}{V^{s't'}_{i'j'}}\bigr)=\tau\bigl(\prescript{k}{l}{V^{st}_{ij}}\prescript{k'}{l'}{V^{t's'}_{j'i'}}\bigr)
\]
\end{lemma}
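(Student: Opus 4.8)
The plan is to realize $\sigma_\tau$ as a vector state arising from two \emph{commuting} $*$-representations of $\C(\MM_{\PP,\OO})$ on the GNS space of $\tau$, and then to invoke the universal property of the maximal tensor product. We may assume $\tau$ is a tracial state (rescaling if necessary). First I would perform the GNS construction for $\tau$: let $\sH_\tau$ be the completion of $\C(\MM_{\PP,\OO})$ in the inner product $\is{a}{b}=\tau(a^*b)$, write $[a]$ for the class of $a$, put $\Omega_\tau=[\I]$, and let $\pi_\tau$ be the left regular representation $\pi_\tau(x)[a]=[xa]$.

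The crucial use of traciality comes next. I would define an antilinear map $J$ on the dense subspace by $J[a]=[a^*]$. Because $\tau$ is tracial, $\|J[a]\|^2=\tau(aa^*)=\tau(a^*a)=\|[a]\|^2$, so $J$ extends to an antiunitary involution of $\sH_\tau$. Setting $\pi_R(x)=J\pi_\tau(x)J$ one checks directly that $\pi_R(x)[a]=[ax^*]$, that $\pi_R$ is a unital $*$-representation (a homomorphism, not an anti-homomorphism), and that its range commutes with that of $\pi_\tau$, since $\pi_\tau(y)\pi_R(x)[a]=[yax^*]=\pi_R(x)\pi_\tau(y)[a]$. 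Verifying that $\pi_R$ preserves adjoints, $\pi_R(x^*)=\pi_R(x)^*$, again uses cyclicity of $\tau$. Producing in this way a commuting second copy of the algebra out of the trace is the heart of the argument and the one place where the tracial hypothesis is indispensable.

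With the commuting $*$-representations $\pi_\tau,\pi_R$ in hand, the universal property of $\tens_\mmax$ yields a unital $*$-homomorphism $\Pi\colon\C(\MM_{\PP,\OO})\tens_\mmax\C(\MM_{\PP,\OO})\to\B(\sH_\tau)$ with $\Pi(a\tens b)=\pi_\tau(a)\pi_R(b)$. I would then define $\sigma_\tau=\is{\Omega_\tau}{\Pi(\cdot)\Omega_\tau}$, which is a state, and compute for all $a,b$ that
\[
\sigma_\tau(a\tens b)=\is{\Omega_\tau}{\pi_\tau(a)\pi_R(b)\Omega_\tau}=\is{[\I]}{[ab^*]}=\tau(ab^*).
\]

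Finally, to match the asserted formula I would record the identity obtained by applying the $*$-operation to the defining relation of the generators $\prescript{k}{l}V_{ij}^{st}$. Since $\bPhi_{\PP,\OO}$ is a unital $*$-homomorphism and $(\prescript{k}{}e_{ij})^*=\prescript{k}{}e_{ji}$, $(\prescript{l}{}f_{st})^*=\prescript{l}{}f_{ts}$, comparing matrix-unit coefficients gives $(\prescript{k}{l}V_{ij}^{st})^*=\prescript{k}{l}V_{ji}^{ts}$. Substituting $a=\prescript{k}{l}V_{ij}^{st}$ and $b=\prescript{k'}{l'}V_{i'j'}^{s't'}$ into $\sigma_\tau(a\tens b)=\tau(ab^*)$ then produces precisely $\tau(\prescript{k}{l}V_{ij}^{st}\,\prescript{k'}{l'}V_{j'i'}^{t's'})$, as required. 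Beyond the verification in the second paragraph everything else is routine bookkeeping, so I expect the commuting-representation step to be the only genuine difficulty.
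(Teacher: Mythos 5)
There is a genuine gap at the heart of your argument: the map $\pi_R(x)=J\pi_\tau(x)J$ is \emph{not} a $*$-representation of $\C(\MM_{\PP,\OO})$. Conjugation by the antiunitary $J$ is conjugate-linear, so $x\mapsto J\pi_\tau(x)J$ is conjugate-linear in $x$; equivalently, your explicit formula $\pi_R(x)[a]=[ax^*]$ gives $\pi_R(\lambda x)=\overline{\lambda}\,\pi_R(x)$. The map is indeed multiplicative and $*$-preserving, but being conjugate-linear it is not a representation, the universal property of $\tens_\mmax$ does not apply to the pair $(\pi_\tau,\pi_R)$, and the resulting functional $a\tens b\mapsto\tau(ab^*)$ is antilinear in $b$, hence not a state. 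Your identity $(\prescript{k}{l}V_{ij}^{st})^*=\prescript{k}{l}V_{ji}^{ts}$ is correct, but it only prescribes the values on the finitely many generators; it does not produce the linear functional you need on the tensor product.

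The paper's proof repairs exactly this point. It sets $\pi_\tau'(x)=J_\tau\pi_\tau(x^*)J_\tau$, i.e.\ $[a]\mapsto[ax]$, which is linear but \emph{anti}-multiplicative --- a genuine representation of $\C(\MM_{\PP,\OO})^\op$. Together with $\pi_\tau$ this yields a state $\sigma_\tau'$ on $\C(\MM_{\PP,\OO})\tens_\mmax\C(\MM_{\PP,\OO})^\op$ with $\sigma_\tau'(a\tens b)=\tau(ab)$. To land on $\C(\MM_{\PP,\OO})\tens_\mmax\C(\MM_{\PP,\OO})$ one then composes with $\id\tens\Upsilon$, where $\Upsilon\colon\C(\MM_{\PP,\OO})\to\C(\MM_{\PP,\OO})^\op$ is the \emph{linear} $*$-isomorphism induced by transposition on $\C(\PP)$ and $\C(\OO)$ (Section \ref{sectOpposite}), satisfying $\Upsilon\bigl(\prescript{k}{l}V_{ij}^{st}\bigr)=\prescript{k}{l}V_{ji}^{ts}$ --- the same effect on generators as the adjoint, but linear. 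This detour through the opposite algebra uses $\C(\MM_{\PP,\OO})\cong\C(\MM_{\PP,\OO})^\op$ and is not cosmetic; it is the step your proof is missing. If you replace your $\pi_R$ by $\pi_\tau'$ and insert the composition with $\id\tens\Upsilon$, the rest of your computation goes through.
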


\begin{proof}
Let $(\sH_\tau,\pi_\tau,\Omega_\tau)$ be the GNS triple for $\tau$. Since $\tau$ is a trace, the mapping
\[
\pi_\tau(a)\Omega_\tau\longmapsto\pi_\tau(a^*)\Omega_\tau,\qquad{a}\in\C(\MM_{\PP,\OO})
\]
(defined on a dense subspace of $\sH_\tau$) extends to an anti-unitary operator $J_\tau\colon\sH_\tau\to\sH_\tau$. Therefore $\pi_\tau'\colon\C(\MM_{\PP,\OO})\to\B(\sH_\tau)$ defined by
\[
\pi_\tau'(a)=J_\tau\pi_\tau(a^*)J_\tau,\qquad{a}\in\C(\MM_{\PP,\OO})
\]
is an anti-representation of $\C(\MM_{\PP,\OO})$, or in other words, a representation of $\C(\MM_{\PP,\OO})^\op$. Thus there exists a representation $\Pi$ of $\C(\MM_{\PP,\OO})\tens_\mmax\C(\MM_{\PP,\OO})^\op$ on $\sH$ such that
\[
\Pi(a\tens{b})=\pi_\tau(a)\pi_\tau'(b),\qquad{a,b}\in\C(\MM_{\PP,\OO})
\]
and we can define a state $\sigma_\tau'$ on $\C(\MM_{\PP,\OO})\tens_\mmax\C(\MM_{\PP,\OO})^\op$ by
\[
\sigma_\tau'(x)=\is{\Omega_\tau}{\Pi(x)\Omega_\tau},\qquad{x}\in\C(\MM_{\PP,\OO})\tens_\mmax\C(\MM_{\PP,\OO})^\op.
\]

Now recall from Section \ref{sectOpposite} that $\C(\MM_{\PP,\OO})^\op$ is isomorphic to $\C(\MM_{\PP,\OO})$. One such isomorphism arises from each pair of isomorphisms $\C(\OO)^\op\cong\C(\OO)$ and $\C(\PP)^\op\cong\C(\PP)$ (cf.~the proof of Corollary \ref{CorOp}). Taking the transposition on both these algebras yields an isomorphism $\Upsilon\colon\C(\MM_{\PP,\OO})\to\C(\MM_{\PP,\OO})^\op$ such that
\[
\Upsilon\bigl(\prescript{k}{l}{V^{st}_{ij}}\bigr)=\prescript{k}{l}{V^{ts}_{ji}},\qquad\quad
\begin{array}{r@{\;}l}
k&=1,\dotsc,N_\OO,\\
l&=1,\dotsc,N_\PP,\\
i,j&=1,\dotsc, n_k,\\
s,t&=1,\dotsc,m_l.\\
\end{array}
\]
We define $\sigma_\tau$ by $\sigma_\tau'\comp(\id\tens\Upsilon)$.
\end{proof}

By a mild abuse of notation let us denote the $(\PP,\OO)$-correlation arising from $\sigma_\tau$ (as defined in Lemma \ref{sTau}) by $T_\tau$. Clearly
\[
T_\tau\bigl(\prescript{k}{}{e_{ij}}\tens\prescript{k'}{}{e_{i'j'}}\bigr)=\sum_{s,t,s',t',l,l'}\tau\bigl(\prescript{k}{l}{V^{st}_{ij}}\prescript{k'}{l'}{V^{t's'}_{j'i'}}\bigr)
\bigl(\prescript{l}{}{f_{st}}\tens\prescript{l'}{}{f_{s't'}}\bigr)
\]
for all $k,k',i,j,i',j'$, so that
\begin{equation}\label{Xfromtau}
\prescript{kk'}{ll'}{X}_{(ij),(i'j')}^{(st),(s't')}=\tau\left(\prescript{k}{l}{V^{st}_{ij}}\prescript{k'}{l'}{V^{t's'}_{j'i'}}\right),\qquad\quad
\begin{array}{r@{\;}l}
k,k'&=1,\dotsc,N_\OO,\\
l,l'&=1,\dotsc,N_\PP,\\
i,j,i',j'&=1,\dotsc,n_k,\\
s,t,s',t'&=1,\dotsc,m_l.
\end{array}
\end{equation}

\begin{theorem}\label{Ttausynchr}
Let $\PP$ and $\OO$ be finite quantum spaces and $\tau$ a trace on $\C(\MM_{\PP,\OO})$. Then the $(\PP,\OO)$-correlation $T_\tau$ is synchronous.
\end{theorem}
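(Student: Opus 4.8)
The plan is to evaluate the defining sum for synchronicity of $T_\tau$ directly from formula \eqref{Xfromtau}, after extracting two purely algebraic identities satisfied by the elements $\prescript{k}{l}V_{ij}^{st}$. Both identities are consequences of the single fact that $\bPhi_{\PP,\OO}$ is a \emph{unital} $*$-homomorphism, and once they are in hand the verification reduces to bookkeeping of indices together with one application of $\tau(\I)=1$.

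First I would record the identities. Expanding $\bPhi_{\PP,\OO}(\prescript{k}{}e_{ij})=\sum_{l,s,t}\prescript{l}{}f_{st}\tens\prescript{k}{l}V_{ij}^{st}$ and using $\prescript{l}{}f_{st}\,\prescript{l'}{}f_{s't'}=\delta_{ll'}\delta_{ts'}\prescript{l}{}f_{st'}$, the multiplicativity relation $\bPhi_{\PP,\OO}(\prescript{k}{}e_{ij})\bPhi_{\PP,\OO}(\prescript{k'}{}e_{i'j'})=\delta_{kk'}\delta_{ji'}\bPhi_{\PP,\OO}(\prescript{k}{}e_{ij'})$ becomes, after matching the coefficient of each matrix unit $\prescript{l}{}f_{st'}$,
\[
\sum_{t}\prescript{k}{l}V_{ij}^{st}\,\prescript{k'}{l}V_{i'j'}^{tt'}=\delta_{kk'}\delta_{ji'}\,\prescript{k}{l}V_{ij'}^{st'}.
\]
Likewise, applying $\bPhi_{\PP,\OO}$ to $\I_{\C(\OO)}=\sum_{k,i}\prescript{k}{}e_{ii}$ and comparing with $\I_{\C(\PP)}\tens\I=\sum_{l,s}\prescript{l}{}f_{ss}\tens\I$ gives the unitality identity
\[
\sum_{k,i}\prescript{k}{l}V_{ii}^{st}=\delta_{st}\,\I.
\]

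With these at hand I would compute the synchronicity sum. By \eqref{Xfromtau} the summand is $\prescript{kk}{ll}X_{(ij),(ij)}^{(st),(st)}=\tau\bigl(\prescript{k}{l}V_{ij}^{st}\,\prescript{k}{l}V_{ji}^{ts}\bigr)$, so by linearity of $\tau$ I may perform the $t$-summation inside $\tau$. Specializing the multiplicativity identity to $k'=k$, $i'=j$, $j'=i$, $t'=s$ (whence $\delta_{kk'}\delta_{ji'}=1$) yields the crucial collapse
\[
\sum_{t}\prescript{k}{l}V_{ij}^{st}\,\prescript{k}{l}V_{ji}^{ts}=\prescript{k}{l}V_{ii}^{ss},
\]
whose right-hand side is independent of $j$. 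Summing over the $n_k$ values of $j$ therefore cancels the factor $\tfrac{1}{n_k}$, leaving $\sum_{s,l}\tfrac{1}{m_l}\,\tau\bigl(\sum_{k,i}\prescript{k}{l}V_{ii}^{ss}\bigr)$. The unitality identity (at $s=t$) collapses the inner sum to $\I$, and since $\tau$ is a tracial state, $\tau(\I)=1$; the remaining sum is $\sum_{l}\tfrac{1}{m_l}\sum_{s=1}^{m_l}1=\sum_{l}1=N_\PP$, which is exactly the definition of synchronicity. The same computation could instead be routed through the criterion of Proposition \ref{prop_synch1}.

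The only genuinely delicate point is the index bookkeeping: correctly reading off the two identities from the homomorphism property and correctly identifying the transposed indices in \eqref{Xfromtau}, so that the $t$-sum really produces $\prescript{k}{l}V_{ii}^{ss}$. I would also stress a conceptual observation worth recording explicitly: the tracial property of $\tau$ enters only through Lemma \ref{sTau}, which is what guarantees that $\sigma_\tau$ is a state and hence that $T_\tau$ is a bona fide $(\PP,\OO)$-correlation (and in particular that $\tau$ is normalized); the numerical synchronicity identity itself uses only linearity of $\tau$, the normalization $\tau(\I)=1$, and the unital $*$-homomorphism relations above.
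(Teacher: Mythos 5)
Your proof is correct and follows essentially the same route as the paper's: the same multiplicativity identity (the paper's equation \eqref{reads}), the same unitality collapse $\sum_{k,i}\prescript{k}{l}V_{ii}^{ss}=\I$, and the same index bookkeeping reducing the synchronicity sum to $\sum_l\tfrac{1}{m_l}\cdot m_l=N_\PP$. The only cosmetic difference is that the paper's displayed computation invokes traciality of $\tau$ to replace $\prescript{k}{}{e_{ii'}}$ by $\prescript{k}{}{e_{pp}}$ before specializing to the diagonal, a step you bypass; your concluding observation that traciality is needed only through Lemma \ref{sTau} (to make $\sigma_\tau$ a state) is recorded by the paper itself in the remark following Theorem \ref{thm_synch_games}.
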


\begin{proof}
First we compute $\bPhi_{\PP,\OO}\bigl(\prescript{k}{}{e_{ij}}\prescript{k'}{}{e_{j'i'}}\bigr)$ in two ways:
\[
\bPhi_{\PP,\OO}\bigl(\prescript{k}{}{e_{ij}}\bigr)\bPhi_{\PP,\OO}\bigl(\prescript{k'}{}{e_{j'i'}}\bigr)
=\bPhi_{\PP,\OO}\bigl(\prescript{k}{}{e_{ij}}\prescript{k'}{}{e_{j'i'}}\bigr)
=\delta_{kk'}\delta_{jj'}\bPhi_{\PP,\OO}\bigl(\prescript{k}{}{e_{ii'}}\bigr).
\]
The left-hand side is
\begin{align*}
\biggl(\sum_{s,t,l}\prescript{l}{}{f_{st}}\tens\prescript{k}{l}{V^{st}_{ij}}\biggr)
\biggl(\sum_{s',t',l'}\prescript{l'}{}{f_{s't'}}\tens\prescript{k'}{l'}{V^{s't'}_{j'i'}}\biggr)
&=\sum_{s,t,l,s',t',l'}
\prescript{l}{}{f_{st}}\prescript{l'}{}{f_{s't'}}
\tens\prescript{k}{l}{V^{st}_{ij}}\prescript{k'}{l'}{V^{s't'}_{j'i'}}\\
&=\sum_{s,t,l,t'}\prescript{l}{}{f_{st'}}
\tens\prescript{k}{l}{V^{st}_{ij}}\prescript{k'}{l}{V^{tt'}_{j'i'}}\\
&=\sum_{s,t,l,t'}\prescript{l}{}{f_{st}}
\tens\prescript{k}{l}{V^{st'}_{ij}}\prescript{k'}{l}{V^{t't}_{j'i'}}\\
&=\sum_{s,t,l}\prescript{l}{}{f_{st}}
\tens\sum_{t'}\prescript{k}{l}{V^{st'}_{ij}}\prescript{k'}{l}{V^{t't}_{j'i'}}
\end{align*}
while the right-hand side is $\delta_{kk'}\delta_{j,j'}\sum\limits_{s,t,l}\prescript{l}{}{f_{st}}\tens\prescript{k}{l}{V^{st}_{ii'}}
=\sum\limits_{s,t,l}\prescript{l}{}{f_{st}}\tens\delta_{kk'}\delta_{j,j'}\prescript{k}{l}{V^{st}_{ii'}}$, so that
\[
\sum_{t'}\prescript{k}{l}{V^{st'}_{ij}}\prescript{k'}{l}{V^{t't}_{j'i'}}
=\delta_{kk'}\delta_{jj'}\prescript{k}{l}{V^{st}_{ii'}}.
\]
which for $t=s$ reads
\begin{equation}\label{reads}
\sum_{t}\prescript{k}{l}{V^{st}_{ij}}\prescript{k'}{l}{V^{ts}_{j'i'}}
=\delta_{kk'}\delta_{jj'}\prescript{k}{l}{V^{ss}_{ii'}}.
\end{equation}
Using this information and denoting by $\operatorname{Tr}_l$ the standard trace on the $l$-th matrix block of $\C(\PP)$ we compute
\begin{align*}
\sum_{s,t}\prescript{kk'}{ll}{X}_{(ij),(i'j')}^{(st),(st)}
&=\sum_{s,t}\tau\bigl(\prescript{k}{l}V_{ij}^{st}\prescript{k'}{l}V_{j'i'}^{ts}\bigr)\\
&=\sum_{s}\delta_{kk'}\delta_{jj'}\tau\bigl(\prescript{k}{l}V_{ii'}^{ss}\bigr)\\
&=\delta_{kk'}\delta_{jj'}(\operatorname{Tr}_l\tens\tau)\biggl(\sum_{s,t,l'}\prescript{l'}{}{f_{st}}\tens\prescript{k}{l'}{V^{st}_{ii'}}\biggr)\\
&=\delta_{kk'}\delta_{jj'}(\operatorname{Tr}_l\tens\tau)\Bigl(\bPhi_{\PP,\OO}\bigl(\prescript{k}{}{e_{ii'}}\bigr)\Bigr)\\
&=\delta_{kk'}\delta_{jj'}(\operatorname{Tr}_l\tens\tau)\Bigl(\bPhi_{\PP,\OO}\bigl(\prescript{k}{}{e_{ip}}\prescript{k}{}{e_{pi'}}\bigr)\Bigr)\\
&=\delta_{kk'}\delta_{jj'}(\operatorname{Tr}_l\tens\tau)\Bigl(\bPhi_{\PP,\OO}\bigl(\prescript{k}{}{e_{ip}}\bigr)\bPhi_{\PP,\OO}\bigl(\prescript{k}{}{e_{pi'}}\bigr)\Bigr)\\
&=\delta_{kk'}\delta_{jj'}(\operatorname{Tr}_l\tens\tau)\Bigl(\bPhi_{\PP,\OO}\bigl(\prescript{k}{}{e_{pi'}}\bigr)\bPhi_{\PP,\OO}\bigl(\prescript{k}{}{e_{ip}}\bigr)\Bigr)\\
&=\delta_{kk'}\delta_{jj'}\delta_{ii'}(\operatorname{Tr}_l\tens\tau)\Bigl(\bPhi_{\PP,\OO}\bigl(\prescript{k}{}{e_{pp}}\bigr)\Bigr)
\end{align*}
for every $p\in\{1,2,\dotsc, n_k\}$.

Putting $k=k'$, $i=i'$, $j=j'$ and summing over $i,j\in\{1,\dotsc, n_k\}$ we get
\[
\sum_{s,t,i,j}\tfrac{1}{n_k^2}\prescript{kk}{ll}{X}_{(ij),(ij)}^{(st),(st)}
=(\operatorname{Tr}_l\tens\tau)\Bigl(\bPhi_{\PP,\OO}\bigl(\prescript{k}{}{e_{pp}}\bigr)\Bigr).
\]
Next, summing over $p\in\{1,\dotsc,n_k\}$ we obtain
\[
\sum_{s,t,i,j}\tfrac{1}{n_k}\prescript{kk}{ll}{X}_{(ij),(ij)}^{(st),(st)}
=(\operatorname{Tr}_l\tens\tau)\biggl(\sum_{p}\bPhi_{\PP,\OO}\bigl(\prescript{k}{}{e_{pp}}\bigr)\biggr)
\]
and summing over $k$ we get
\[
\sum_{s,t,i,j,k}\tfrac{1}{n_k}\prescript{kk}{ll}{X}_{(ij),(ij)}^{(st),(st)}
=(\operatorname{Tr}_l\tens\tau)\biggl(\sum_{p,k}\bPhi_{\PP,\OO}\bigl(\prescript{k}{}{e_{pp}}\bigr)\biggr)
=(\operatorname{Tr}_l\tens\tau)(\I\tens\I)=m_l,
\]
i.e.
\begin{equation}\label{eqtr}
\sum_{s,t,i,j,k}\tfrac{1}{n_km_l}\prescript{kk}{ll}{X}_{(ij),(ij)}^{(st),(st)}=1
\end{equation}
for every $l$. Summing both sides of \eqref{eqtr} over $l$ we finally arrive at
\begin{equation}\label{eqtr2}
\sum_{s,t,i,j,k,l}\tfrac{1}{n_km_l}\prescript{kk}{ll}{X}_{(ij),(ij)}^{(st),(st)}=N_{\PP}
\end{equation}
\end{proof}

In a completely similar manner as in \cite[Theorem~2.5]{BGH} and \cite[Theorem~5.5]{PSSTW} (cf.~also \cite[Theorem 3.2]{HMPS}) we show below that synchronous realizable $(\PP,\OO)$-correlations arise from tracial states on \cst-algebras generated by operators associated to the maps $\Phi_1$ and $\Phi_2$ in the realization $(\Phi_1,\Phi_2,\omega_\xi)$ of the correlation. Moreover, operators associated to $\Phi_1$ and ones associated to $\Phi_2$ are related to each other, in the same way as Alice's and Bob's operators were in \cite[Theorem~2.5]{BGH}. The proof of the next Theorem is an adapted version of the ones of \cite[Theorem~2.5]{BGH} and \cite[Theorem~5.5]{PSSTW}.

\begin{theorem}\label{thm_synch_games}
Let $\PP$ and $\OO$ be finite quantum spaces and let $T\colon\C(\OO)\tens\C(\OO)\to\C(\PP)\tens\C(\PP)$ be a realizable correlation with realization $(\Phi_1,\Phi_2,\omega_\xi)$, where
\[
\Phi_1,\Phi_2\colon\C(\OO)\longrightarrow\C(\PP)\tens\B(\sH)
\]
are unital $*$-homomorphisms with commuting ranges and $\xi\in\sH$ is a unit vector. Let $\bigl\{\prescript{k}{l}U_{ij}^{st}\bigr\}$ and $\bigl\{\prescript{k}{l}W_{ij}^{st}\bigr\}$ be elements of $\B(\sH)$ defined by
\[
\begin{array}{r@{\;}l}
\Phi_1(\prescript{k}{}e_{ij})&=\displaystyle\sum\limits_{l,s,t}\prescript{l}{}f_{st}\tens\prescript{k}{l}U_{ij}^{st},\\
\Phi_2(\prescript{k}{}e_{ij})&=\displaystyle\sum\limits_{l,s,t}\prescript{l}{}f_{st}\tens\prescript{k}{l}W_{ij}^{st},
\end{array}
\qquad\quad{k}=1,\dotsc,N_\OO,\:i,j=1,\dotsc,n_k.
\]
Assume that $T$ is synchronous. Then
\begin{enumerate}
\item $\prescript{k}{l}W_{ij}^{st}\xi=\bigl(\prescript{k}{l}U_{ij}^{st}\bigr)^*\xi$ for all $k,l,i,j,s,t$,
\item the restriction of the vector state $\omega_\xi$ to the \cst-algebra generated by
\begin{equation}\label{genbyW}
\biggl\{\prescript{k}{l}W_{ij}^{st}\,\biggr|\biggl.\,
\begin{array}{r@{\;}lr@{\;}l}
k&=1,\dotsc,N_\OO,&i,j&=1,\dotsc,n_k,\\
l&=1,\dotsc,N_\PP,&s,t&=1,\dotsc,m_l
\end{array}
\biggr\}
\end{equation}
is a trace,
\item the restriction of the vector state $\omega_\xi$ to the \cst-algebra generated by
\begin{equation}\label{genbyU}
\biggl\{\prescript{k}{l}U_{ij}^{st}\,\biggr|\biggl.\,
\begin{array}{r@{\;}lr@{\;}l}
k&=1,\dotsc,N_\OO,&i,j&=1,\dotsc,n_k,\\
l&=1,\dotsc,N_\PP,&s,t&=1,\dotsc,m_l
\end{array}
\biggr\}
\end{equation}
is a trace.
\end{enumerate} 
Moreover the state $\tau$ on $\C(\MM_{\PP,\OO})$ defined as $\tau=\omega_\xi\comp\Lambda_1$, where $\Lambda_1\colon\C(\MM_{\PP,\OO})\to\B(\sH)$ is the unique unital $*$-homomorphism such that
\[
\Phi_1=(\id\tens\Lambda_1)\comp\bPhi_{\PP,\OO}
\]
is a trace and we have $T=T_\tau$.
\end{theorem}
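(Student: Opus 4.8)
The plan is to realise the operators in the statement as images under the two structure homomorphisms and to reduce everything to algebraic identities among their slices. Introduce the unital $*$-homomorphism $\Lambda_2\colon\C(\MM_{\PP,\OO})\to\B(\sH)$ with $\Phi_2=(\id\tens\Lambda_2)\comp\bPhi_{\PP,\OO}$ and recall the slices $\prescript{k}{l}V_{ij}^{st}$ of $\bPhi_{\PP,\OO}$ from the start of Section~\ref{sect:synchr}; then $\prescript{k}{l}U_{ij}^{st}=\Lambda_1(\prescript{k}{l}V_{ij}^{st})$ and $\prescript{k}{l}W_{ij}^{st}=\Lambda_2(\prescript{k}{l}V_{ij}^{st})$. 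Applying $\Lambda_1$ and $\Lambda_2$ to the relations valid for the $\prescript{k}{l}V_{ij}^{st}$ (that $\bPhi_{\PP,\OO}$ is a unital $*$-homomorphism and that $(\prescript{k}{}e_{ij})^*=\prescript{k}{}e_{ji}$) yields, for both families, the transposition rule $(\prescript{k}{l}U_{ij}^{st})^*=\prescript{k}{l}U_{ji}^{ts}$, the multiplicativity identity $\sum_{t}\prescript{k}{l}U_{ij}^{st}\,\prescript{k'}{l}U_{j'i'}^{tu}=\delta_{kk'}\delta_{jj'}\,\prescript{k}{l}U_{ii'}^{su}$ (obtained exactly as in the computation preceding \eqref{reads}), and the normalisation $\sum_{k,j}\prescript{k}{l}U_{jj}^{st}=\delta_{st}\I$ coming from $\Phi_i(\I)=\I$. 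A slicing of the defining formula for $T$ then shows that its coefficients \eqref{defXxx} are $\prescript{kk'}{ll'}X_{(ij),(i'j')}^{(st),(s't')}=\is{\xi}{\prescript{k}{l}U_{ij}^{st}\,\prescript{k'}{l'}W_{i'j'}^{s't'}\xi}$.

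The core of the proof is item~(1), which I would extract from the single non-negative quantity
\[
S=\sum_{k,l,i,j,s,t}\tfrac{1}{n_km_l}\bigl\|\prescript{k}{l}W_{ij}^{st}\xi-(\prescript{k}{l}U_{ij}^{st})^*\xi\bigr\|^2\ge0.
\]
Expanding the norm gives three contributions. In $\sum\tfrac{1}{n_km_l}\|\prescript{k}{l}W_{ij}^{st}\xi\|^2$ I would first sum over $s$: the transposition rule and the multiplicativity identity collapse $\sum_s(\prescript{k}{l}W_{ij}^{st})^*\prescript{k}{l}W_{ij}^{st}$ to $\prescript{k}{l}W_{jj}^{tt}$, after which the remaining summations, using the normalisation, produce the value $N_\PP$, the weights $\tfrac1{n_km_l}$ being exactly right for this; the term with $U$ is identical and also equals $N_\PP$. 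For the cross contribution one rewrites $\is{\prescript{k}{l}W_{ij}^{st}\xi}{(\prescript{k}{l}U_{ij}^{st})^*\xi}=\overline{\is{\xi}{\prescript{k}{l}U_{ij}^{st}\,\prescript{k}{l}W_{ij}^{st}\xi}}$, so that after weighting it becomes $-2$ times the conjugate of the left-hand side of the synchronicity condition; hence synchronicity forces it to equal $-2N_\PP$. Therefore $S=N_\PP+N_\PP-2N_\PP=0$, and being a sum of non-negative terms $S=0$ forces each summand to vanish, which is precisely $\prescript{k}{l}W_{ij}^{st}\xi=(\prescript{k}{l}U_{ij}^{st})^*\xi$.

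From item~(1) I would pass to the global identity $\Lambda_2(c)\xi=\Lambda_1(c^*)\xi$ for all $c\in\C(\MM_{\PP,\OO})$. It holds on the generators $c=\prescript{k}{l}V_{ij}^{st}$ by~(1), and it propagates to products using that the ranges of $\Lambda_1$ and $\Lambda_2$ commute (cf.~\eqref{Lambdaab}): if it holds for $c$ and $c'$ then $\Lambda_2(cc')\xi=\Lambda_2(c)\Lambda_1(c'^{\,*})\xi=\Lambda_1(c'^{\,*})\Lambda_2(c)\xi=\Lambda_1((cc')^*)\xi$, and boundedness extends it to all of $\C(\MM_{\PP,\OO})$. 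Traciality of $\tau=\omega_\xi\comp\Lambda_1$ is then a short computation: for $a,b\in\C(\MM_{\PP,\OO})$,
\[
\tau(ab)=\is{\Lambda_1(a^*)\xi}{\Lambda_1(b)\xi}=\is{\Lambda_2(a)\xi}{\Lambda_1(b)\xi}=\is{\xi}{\Lambda_1(b)\Lambda_2(a^*)\xi}=\is{\xi}{\Lambda_1(ba)\xi}=\tau(ba),
\]
where the identity is used at the second and fourth equalities and the commuting of the ranges at the third. Items~(2) and~(3) are the same statement for $\omega_\xi$ restricted to the $\cst$-algebras generated by the $\prescript{k}{l}W_{ij}^{st}$ and the $\prescript{k}{l}U_{ij}^{st}$, i.e.\ the ranges of $\Lambda_2$ and $\Lambda_1$; they follow by the identical argument, the identity being symmetric under $\Lambda_1\leftrightarrow\Lambda_2$ (replace $c$ by $c^*$). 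Finally $T=T_\tau$ follows by matching coefficients: since $\prescript{k'}{l'}W_{i'j'}^{s't'}\xi=\Lambda_1\bigl((\prescript{k'}{l'}V_{i'j'}^{s't'})^*\bigr)\xi=\Lambda_1(\prescript{k'}{l'}V_{j'i'}^{t's'})\xi$, the coefficient $\is{\xi}{\prescript{k}{l}U_{ij}^{st}\,\prescript{k'}{l'}W_{i'j'}^{s't'}\xi}$ equals $\tau(\prescript{k}{l}V_{ij}^{st}\,\prescript{k'}{l'}V_{j'i'}^{t's'})$, which is exactly the coefficient of $T_\tau$ recorded in \eqref{Xfromtau}.

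The main obstacle is the bookkeeping in the second paragraph. Getting each of the two diagonal contributions of $S$ to equal $N_\PP$ requires performing the partial summations over the matrix-unit indices in the correct order and verifying that the normalisation weights $\tfrac{1}{n_km_l}$ interact correctly with the multiplicativity and normalisation identities; one must also check that the cross contribution is genuinely the (real) synchronicity functional. Once item~(1) and the resulting identity $\Lambda_2(c)\xi=\Lambda_1(c^*)\xi$ are in place, the passage to traces and to $T=T_\tau$ is purely formal manipulation with the two commuting $*$-homomorphisms.
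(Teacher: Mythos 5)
Your treatment of item (1) is correct and is a genuinely different route from the paper's: where the paper runs a chain of Cauchy--Schwarz estimates, forces them into equalities and then has to analyse the resulting unimodular phases $\prescript{k}{l}\alpha_{ij}^{st}$ using positivity of the diagonal coefficients, you expand the single non-negative quantity $S$ and evaluate its three contributions directly. The bookkeeping does check out: the transposition rule, the multiplicativity identity (the analogue of \eqref{reads}) and the normalisation give each diagonal term the value $N_\PP$ exactly as in \eqref{last1}--\eqref{last2}, and the cross term is $-\sum\tfrac{1}{n_km_l}\bigl(\prescript{kk}{ll}{X}_{(ij),(ij)}^{(st),(st)}+\overline{\prescript{kk}{ll}{X}_{(ij),(ij)}^{(st),(st)}}\bigr)=-2N_\PP$ by synchronicity (the target value $N_\PP$ being real). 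So $S=0$ and (1) follows, with no phase discussion needed. This part is a clean improvement in presentation.

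There is, however, a genuine flaw in the second half. The claimed identity $\Lambda_2(c)\xi=\Lambda_1(c^*)\xi$ \emph{for all} $c\in\C(\MM_{\PP,\OO})$ is false: the left-hand side is complex-linear in $c$ while the right-hand side is conjugate-linear, so already $c=i\I$ gives $i\xi$ on the left and $-i\xi$ on the right. Consequently the step ``boundedness extends it to all of $\C(\MM_{\PP,\OO})$'' fails: the set of $c$ satisfying the identity is only a norm-closed \emph{real} subalgebra (containing the generators and closed under products, adjoints and real linear combinations), and it cannot exhaust the complex algebra. The repair is small and essentially returns you to the paper's word-based argument: your induction proves the identity precisely for $c$ a word in the generators $\prescript{k}{l}V_{ij}^{st}$ and their adjoints (a self-adjoint family, so $a^*$ is again such a word whenever $a$ is). Since $(a,b)\mapsto\tau(ab)-\tau(ba)$ is linear in each variable and the complex span of such words is dense in $\C(\MM_{\PP,\OO})$, it suffices to run your four-line computation for $a$ a word and $b$ arbitrary -- both uses of the identity (at $c=a$ and $c=a^*$) are then legitimate -- and to conclude by linearity and density. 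With that modification items (2) and (3), the traciality of $\tau=\omega_\xi\comp\Lambda_1$, and the identification $T=T_\tau$ (whose coefficient computation only uses item (1) on the generators and matches \eqref{Xfromtau}) all go through as you wrote them.
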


\begin{proof}
Using the Schwarz inequality in $\sH$ and then in $\RR^{2n_k+2m_l}$ we compute
\begin{equation}\label{inequalities}
\begin{split}
N_{\PP}&=\sum_{s,t,i,j,k,l}\tfrac{1}{n_km_l}\prescript{kk}{ll}{X}_{(ij),(ij)}^{(st),(st)}
=\sum_{s,t,i,j,k,l}\tfrac{1}{n_km_l}\bigis{\xi}{\prescript{k}{l}U_{ij}^{st}\prescript{k}{l}W_{ij}^{st}\xi}\\
&\leq\sum_{s,t,i,j,k,l}\tfrac{1}{n_km_l}\Bigl|\bigis{\xi}{\prescript{k}{l}U_{ij}^{st}\prescript{k}{l}W_{ij}^{st}\xi}\Bigr|\\
&=\sum_{s,t,i,j,k,l}\tfrac{1}{n_km_l}\Bigl|\bigis{\bigl(\prescript{k}{l}U_{ij}^{st}\bigr)^*\xi}{\prescript{k}{l}W_{ij}^{st}\xi}\Bigr|\\
&\leq\sum_{s,t,i,j,k,l}\tfrac{1}{n_km_l}\bigl\|\bigl(\prescript{k}{l}U_{ij}^{st}\bigr)^*\xi\bigr\|\bigl\|\prescript{k}{l}W_{ij}^{st}\xi\bigr\|\\
&\leq\sum_{k,l}\tfrac{1}{n_km_l}\biggl(\sum_{s,t,i,j}\bigl\|\bigl(\prescript{k}{l}U_{ij}^{st}\bigr)^*\xi\bigr\|^2\biggr)^{\frac{1}{2}}
\biggl(\sum_{s,t,i,j}\bigl\|\prescript{k}{l}W_{ij}^{st}\xi\bigr\|^2\biggr)^{\frac{1}{2}}\\
&=\sum_{k,l}\tfrac{1}{n_km_l}\biggl(\sum_{s,t,i,j}\bigis{\xi}{\prescript{k}{l}U_{ij}^{st}\bigl(\prescript{k}{l}U_{ij}^{st}\bigr)^*\xi}\biggr)^{\frac{1}{2}}
\biggl(\sum_{s,t,i,j}\bigis{\xi}{\bigl(\prescript{k}{l}W_{ij}^{st}\bigr)^*\prescript{k}{l}W_{ij}^{st}\xi}\biggr)^{\frac{1}{2}}\\
&=\sum_{k,l}\tfrac{1}{n_km_l}\biggl(\sum_{s,t,i,j}\bigis{\xi}{\prescript{k}{l}U_{ij}^{st}\prescript{k}{l}U_{ji}^{ts}\xi}\biggr)^{\frac{1}{2}}
\biggl(\sum_{s,t,i,j}\bigis{\xi}{\prescript{k}{l}W_{ji}^{ts}\prescript{k}{l}W_{ij}^{st}\xi}\biggr)^{\frac{1}{2}}.
\end{split}
\end{equation}

Next we note that
\begin{subequations}
\begin{align}
\sum_{s,t,i,j,k}\tfrac{1}{n_k}\prescript{k}{l}U_{ij}^{st}\prescript{k}{l}U_{ji}^{ts}=\sum_{i,s,k}\prescript{k}{l}U_{ii}^{ss}=m_l\I_H,\label{last1}\\
\sum_{s,t,i,j,k}\tfrac{1}{n_k}\prescript{k}{l}W_{ji}^{ts}\prescript{k}{l}W_{ij}^{st}=\sum_{j,t,k}\prescript{k}{l}W_{jj}^{tt}=m_l\I_H.\label{last2}
\end{align}
\end{subequations}
Indeed, the first equality in \eqref{last1} follows from the fact that
\[
\sum_{t}\prescript{k}{l}{U^{st}_{ij}}\prescript{k'}{l}{U^{ts}_{j'i'}}
=\delta_{kk'}\delta_{jj'}\prescript{k}{l}{U^{ss}_{ii'}},\qquad\quad
\begin{array}{r@{\;}l}
k,k'&=1,\dotsc,N_\OO,\\
l&=1,\dotsc,N_\PP,\\
j,j',i,i'&=1,\dotsc,n_k,\\
s&=1,\dotsc,m_l.
\end{array}
\]
which is obtained by applying $\Lambda_1$ (described in the statement of the theorem) to both sides of \eqref{reads} and noting that
\begin{equation}\label{Lambdajeden}
\Lambda_1\bigl(\prescript{k}{l}{V^{st}_{ij}}\bigr)=\prescript{k}{l}{U^{st}_{ij}},\qquad\quad
\begin{array}{r@{\;}l}
k&=1,\dotsc,N_\OO,\\
l&=1,\dotsc,N_\PP,\\
i,j&=1,\dotsc,n_k,\\
s,t&=1,\dotsc,m_l.
\end{array}
\end{equation}
The second equality in \eqref{last1} is a consequence of the fact that for every $s$ the expression $\sum\limits_{i,k}\prescript{k}{l}U_{ii}^{ss}$ is the $(s,s)$-component in $\Mat_{m_l}(\B(\sH))=\Mat_{m_l}(\CC)\tens\B(\sH)\subset\C(\PP)\tens\B(\sH)$ of the image of the unit $\I\in\C(\OO)$ under the homomorphism $\Phi_1\colon\C(\OO)\to\C(\PP)\tens\B(H)$. Since $\Phi_1$ is unital, we see that
\[
\sum_{i,k}\prescript{k}{l}U_{ii}^{ss}=\I_H
\]
and summing over $s$ we get \eqref{last1}. Similarly we derive \eqref{last2}.

In particular
\[
\sum_{k,l}\tfrac{1}{n_km_l}\biggl(\sum_{s,t,i,j}\bigis{\xi}{\prescript{k}{l}U_{ij}^{st}\prescript{k}{l}U_{ji}^{ts}\xi}\biggr)^{\frac{1}{2}}
\biggl(\sum_{s,t,i,j}\bigis{\xi}{\prescript{k}{l}W_{ji}^{ts}\prescript{k}{l}W_{ij}^{st}\xi}\biggr)^{\frac{1}{2}}=\sum_{k}\is{\xi}{\xi}=N_\PP
\]
and the inequalities in \eqref{inequalities} are actually equalities. From the fact that the first inequality in \eqref{inequalities} is an equality we conclude that $\prescript{kk}{ll}{X}_{(ij),(ij)}^{(st),(st)}\geq{0}$ for all indices. The equality version on the fifth (in)equality of \eqref{inequalities} shows that \begin{equation}\label{UW1}
\prescript{k}{l}W_{ij}^{st}\xi=\prescript{k}{l}\alpha_{ij}^{st}\bigl(\prescript{k}{l}U_{ij}^{st}\bigr)^*\xi
\end{equation}
for all $s,t,i,j,k,l$ where $\prescript{k}{l}\alpha_{ij}^{st} \in\mathbb{T}$. Using this equation we get 
\[
\begin{split}\prescript{kk}{ll}{X}_{(ij),(ij)}^{(st),(st)} &= \bigis{\xi}{\prescript{k}{l}U_{ij}^{st}\prescript{k}{l}W_{ij}^{st}\xi}\\ &=\prescript{k}{l}\alpha_{ij}^{st} \bigis{\xi}{\prescript{k}{l}U_{ij}^{st}\bigl(\prescript{k}{l}U_{ij}^{st}\bigr)^*\xi}
\end{split} \] and since $\prescript{kk}{ll}{X}_{(ij),(ij)}^{(st),(st)}\geq 0$ we conclude that $\prescript{k}{l}\alpha_{ij}^{st}\in\{0,1\}$ and
in both cases we get
\begin{equation}\label{UW}
\prescript{k}{l}W_{ij}^{st}\xi=\bigl(\prescript{k}{l}U_{ij}^{st}\bigr)^*\xi
\end{equation}
for all $s,t,i,j,k,l$.

Following the method of the proof of \cite[Theorem 2.5]{BGH} we conclude that the vector state $\omega_\xi$ on $\B(\sH)$ is a trace when restricted to the \cst-subalgebra of $\B(\sH)$ generated by \eqref{genbyU} and similarly for on the \cst-subalgebra generated by \eqref{genbyW}. To that end for each $(k,l,i,j,s,t)$ we let the symbol $\bigl(\prescript{k}{l}U_{ij}^{st}\bigr)^{-1}$ denote the adjoint of $\prescript{k}{l}U_{ij}^{st}$ and we consider a word
\[
Z=\bigl(\prescript{k_1}{l_1}U_{i_1j_1}^{s_1t_1}\bigr)^{\alpha_1}\cdots \bigl(\prescript{k_q}{l_q}U_{i_qj_q}^{s_qt_q}\bigr)^{\alpha_q},
\]
in the generators \eqref{genbyU} and their adjoints, where $\alpha_a\in\{1,-1\}$ for $a=1,\dotsc,q$. Since each $\prescript{k}{l}W_{ij}^{st}$ commutes with each $\prescript{k'}{l'}U_{i'j'}^{s't'}$, using \eqref{UW} the vector $Z\xi$ can be rewritten in terms of the generators \eqref{genbyW} and their adjoints as
\[
Z\xi=\bigl(\prescript{k_q}{l_q}W_{i_qj_q}^{s_qt_q}\bigr)^{-\alpha_q}\cdots \bigl(\prescript{k_1}{l_1}W_{i_1j_1}^{s_1t_1}\bigr)^{-\alpha_1}\xi.
\]
Thus, just as in \cite[proof of Theorem 2.5]{BGH} and \cite[Proof of Theorem 5.5]{PSSTW} (cf.~\cite[Theorem 3.2]{HMPS}),
\[
\omega_\xi\bigl(\prescript{k}{l}U_{ij}^{st}Z\bigr)=\omega_\xi(Z\prescript{k}{l}U_{ij}^{st}),\quad \omega_\xi\bigl(\prescript{k}{l}U_{ij}^{st}\prescript{k'}{l'}U_{i'j'}^{s't'}Z\bigr)=\omega_\xi(Z\prescript{k}{l}U_{ij}^{st}\prescript{k'}{l'}U_{i'j'}^{s't'})
\]
for all $k,k',l,l',s,s',t,t',i,i',j,j'$. To complete the proof of the corresponding traciality it suffices now to use the induction on the number of $U$'s under $\omega_\xi$. 

In order to prove the final statement of the theorem let us note that by \eqref{Lambdajeden} the range of $\Lambda_1$ is the \cst-algebra generated by the $\prescript{k}{l}U_{ij}^{st}$ (with all possible indices) on which the vector functional $\omega_\xi$ is a trace. It follows that $\tau$ is a trace.

Moreover the Hilbert space $\sH_\xi$ defined as the closure of $\Lambda_1(\C(\MM_{\PP,\OO}))\xi$ can be identified with the GNS space for $\tau$. Using this identification we have
\begin{align*}
\tau\bigl(\prescript{k}{l}V_{ij}^{st}\prescript{k'}{l'}V_{j'i'}^{t's'}\bigr)&=\bigis{\xi}{\prescript{k}{l}U_{ij}^{st}\prescript{k'}{l'}U_{j'i'}^{t's'}\xi}\\
&=\bigis{\xi}{\prescript{k}{l}U_{ij}^{st}\bigl(\prescript{k'}{l'}U_{i'j'}^{s't'}\bigr)^*\xi}\\
&=\bigis{\xi}{\prescript{k}{l}U_{ij}^{st}\prescript{k'}{l'}W_{i'j'}^{s't'}\xi},
\end{align*}
where in the third equality we used \eqref{UW}. In view of \eqref{Xfromtau}, this proves the claim.
\end{proof}

\begin{remark}
Let us give a separate proof that a $(\PP,\OO)$-correlation $T_\tau$ defined by a trace $\tau$ on $\C(\MM_{\PP,\OO})$ as explained before Theorem \ref{Ttausynchr} is synchronous. We will use Proposition \ref{prop_synch1}.

We compute 
\begin{align*}
\is{\phi}{T\biggl(\sum_{i,j,k}\tfrac{1}{n_k}\prescript{k}{}e_{ij}\tens\prescript{k}{}e_{ij}\biggr)\phi}
&=\is{\phi}{\biggl(\sum_{i,j,k,l,l',s,t,s',t'}\tau\bigl(\prescript{k}{l}V_{ij}^{st}\prescript{k}{l'}V_{ji}^{t's'}\bigr)\tfrac{1}{n_k}\bigl(\prescript{l}{}f_{st}\tens\prescript{l'}{}f_{s't'}\bigr)\biggr)\phi}\\
&=\sum_{i,j,k,l,l',s,t,s',t'}\tfrac{1}{n_k}\tau\bigl(\prescript{k}{l}V_{ij}^{st}\prescript{k}{l'}V_{ji}^{t's'}\bigr)\bigis{\phi}{\bigl(\prescript{l}{}f_{st} \tens\prescript{l'}{}f_{s't'}\bigr)\phi}.
\end{align*}
From the identity
\begin{align*}
\bigis{\phi}{\bigl(\prescript{l}{}f_{st}\tens\prescript{l'}{}f_{s't'}\bigr)\phi}&
=\tfrac{1}{N_\PP{m_l}}\delta_{ll'}\sum_{s_1,t_1}\bigis{\prescript{l}{}f_{s_1}}{\prescript{l}{}f_{s}}\bigis{\prescript{l}{}f_{s_1}}{\prescript{l}{}f_{s'}}\bigis{\prescript{l}{}f_{t}}{\prescript{l}{}f_{t_1}}\bigis{\prescript{l}{}f_{t'}}{\prescript{l}{}f_{t_1}}\\
&=\tfrac{1}{N_\PP{m_l}}\delta_{ll'}\delta_{ss'}\delta_{tt'}
\end{align*}
we get
\begin{align*}
\is{\phi}{T\biggl(\sum_{i,j,k}\tfrac{1}{n_k}\prescript{k}{}e_{ij}\tens\prescript{k}{}e_{ij}\biggr)\phi}
&=\sum_{i,j,k,l,l',s,t,s',t'}\tfrac{1}{N_{\PP}m_ln_k}\tau\bigl(\prescript{k}{l}V_{ij}^{st}\prescript{k}{l'}V_{ji}^{t's'}\bigr)\delta_{ll'}\delta_{ss'}\delta_{tt'}\\
&=\sum_{i,j,k,l,s,t}\tfrac{1}{N_{\PP}m_ln_k}\tau\bigl(\prescript{k}{l}V_{ij}^{st}\prescript{k}{l}V_{ji}^{ts}\bigr)\\
&=\sum_{i,j,k,l,s}\tfrac{1}{N_{\PP}m_ln_k}\tau\bigl(\prescript{k}{l}V_{ii}^{ss}\bigr)\\
&=\sum_{i,k,l,s}\tfrac{1}{N_{\PP}m_l}\tau\bigl(\prescript{k}{l}V_{ii}^{ss}\bigr)\\
&=\sum_{l,s}\tfrac{1}{N_{\PP}m_l}\tau(\I)\\
&=\sum_{l,s}\tfrac{1}{N_{\PP}m_l}=1.
\end{align*}

Let us note in the reasoning above we never used the fact that $\tau$ was a trace (although it is needed to define $T_\tau=T_{\sigma_\tau}$, cf.~Lemma \ref{sTau}). In particular, Theorem \ref{thm_synch_games} shows that if $\omega$ is a state on $\C(\MM_{\PP,\OO})$ and there exists a $(\PP,\OO)$-correlation $T$ satisfying 
\[
T\bigl(\prescript{k}{}e_{ij}\tens\prescript{k}{}e_{ij}\bigr)=\sum_{l,l's,s',t,t'}\omega\bigl(\prescript{k}{l}V_{ij}^{st}\prescript{k}{l'}V_{ji}^{t's'}\bigr)
\bigl(\prescript{l}{}f_{st}\tens\prescript{l'}{}f_{s't'}\bigr)
\]
for all $k\in\{1,\dotsc,N_{\OO}\}$ and $i,j\in\{1,\dotsc,n_k\}$ then, if $T$ is realizable, there exists a trace $\tau$ on $\C(\MM_{\PP,\OO})$ such that
\[
\tau\bigl(\prescript{k}{l}V_{ij}^{st}\prescript{k}{l'}V_{ji}^{t's'}\bigr)=\omega\bigl(\prescript{k}{l}V_{ij}^{st}\prescript{k}{l'}V_{ji}^{t's'}\bigr)
\]
for all indices.
\end{remark}

\section*{Acknowledgments}
 
The first author acknowledges the hospitality of the Department of Mathematics of the Indiana University Bloomington during the Fulbright Junior Research Award scholarship funded by the Polish-US Fulbright Commission. The third author was partially supported by the Polish National Agency for the Academic Exchange, Polonium grant PPN/BIL/2018/1/00197 as well as by the FWO–PAS project VS02619N: von Neumann algebras arising from quantum symmetries. The authors also wish to thank Hari Bercovici, Michael Brannan, Samuel Harris, Ching Wei Ho, Vern Paulsen, Adam Skalski, William Slofstra, Thomas Sinclair and Ivan Todorov for illuminating discussions on the subject of the paper.

\end{document}